\title{Renormalized area and \\ properly embedded minimal surfaces \\ in hyperbolic
$3$-manifolds}
\author{Spyridon Alexakis \thanks{Email: alexakis@math.princeton.edu}\\
Princeton University \and Rafe Mazzeo \thanks{Email: mazzeo@math.stanford.edu.
Supported by the NSF under Grant DMS-0505709}\\ Stanford University}
\date{}
\newtheorem{theorem}{Theorem}[section]
\newtheorem{proposition}{Proposition}[section]
\newtheorem{corollary}{Corollary}[section]
\newtheorem{remark}{Remark}[section]
\newcommand{\RR}{\mathbb{R}}
\newcommand{\ZZ}{\mathbb{Z}}
\newcommand{\HH}{\mathbb{H}}
\newcommand{\e}{\epsilon}
\newcommand{\del}{\partial}
\newcommand{\calA}{{\mathcal A}}
\newcommand{\calC}{{\mathcal C}}
\newcommand{\calD}{{\mathcal D}}
\newcommand{\calE}{{\mathcal E}}
\newcommand{\calF}{{\mathcal F}}
\newcommand{\calG}{{\mathcal G}}
\newcommand{\calK}{{\mathcal K}}
\newcommand{\calM}{{\mathcal M}}
\newcommand{\calO}{{\mathcal O}}
\newcommand{\calR}{{\mathcal R}}
\newcommand{\calS}{{\mathcal S}}
\newcommand{\calU}{{\mathcal U}}
\newcommand{\calV}{{\mathcal V}}
\newcommand{\calW}{{\mathcal W}}
\newcommand{\frakc}{{\mathfrak c}}
\newcommand{\olg}{\overline{g}}
\newcommand{\olN}{\overline{N}}
\newcommand{\olT}{\overline{T}}
\newcommand{\olnu}{\overline{\nu}}
\newcommand{\olk}{\overline{k}}
\newcommand{\Ric}{\mathrm{Ric}}
\newcommand{\tr}{\mathrm{tr}\,}
\newcommand{\wh}{\widehat}
\newcommand{\whk}{\wh{k}}
\begin{document}

\maketitle

\begin{abstract} We study the renormalized area functional $\calA$ in the AdS/CFT correspondence,
specifically for properly embedded minimal surfaces in convex cocompact hyperbolic $3$-manifolds 
(or somewhat more broadly, Poincar\'e-Einstein spaces).
Our main results include an explicit formula for the renormalized area of such a minimal surface
$Y$ as an integral of local geometric quantities, as well as formul\ae\ for the first and 
second variations of $\calA$ which are given by integrals of {\it global} quantities over
the asymptotic boundary loop $\gamma$ of $Y$. All of these formul\ae\ are also obtained for
a broader class of nonminimal surfaces. The proper setting for the study of this functional
(when the ambient space is hyperbolic) requires an understanding of the moduli space of all 
properly embedded minimal surfaces with smoothly embedded asymptotic boundary. We show
that this moduli space is a smooth Banach manifold and develop a $\ZZ$-valued degree theory
for the natural map taking a minimal surface to its boundary curve. We characterize the 
nondegenerate critical points of $\calA$ for minimal surfaces in $\HH^3$, and finally, discuss
the relationship of $\calA$ to the Willmore functional. 
\end{abstract}

\section{Introduction}
There is an interesting nonlinear asymptotic boundary problem in which one seeks a minimal submanifold
in hyperbolic space with prescribed asymptotic boundary a submanifold in the sphere at infinity.
This was treated conclusively by Anderson \cite{An1}, \cite{An2} in the early 1980's using techniques from 
geometric measure theory; the solutions he obtains are absolutely volume minimizing with respect to compact 
variations. One may also pose this problem when the ambient space is a convex cocompact hyperbolic manifold, 
or even more generally a conformally compact manifold $(M^{n+1},g)$ (all definitions are reviewed in \S 2), 
and it is not hard to extend the existence theory to these settings. Here, however, we focus mostly on the special 
case of properly embedded minimal surfaces $Y^2$ in $M = \HH^3/\Gamma$ where $\Gamma$ is a convex cocompact 
subgroup (a particular case is $M = \HH^3$ itself), with boundary curve $\del Y = \gamma$ an embedded closed 
curve $\gamma \subset \del M$. Beyond Anderson's aforementioned work, in this particular setting there is also a 
rich existence theory of minimal (not necessarily minimizing) surfaces of arbitrary genus by de Oliveira and
Soret \cite{OS}, see also Coskunzer \cite{Cos1}, \cite{Cos2}. 

It turns out that there is a well-defined Hadamard regularization of the area of such a minimal surface,
and this renormalized area is our central concern. Roughly speaking, our goal is to obtain a local
formula for this renormalized area, i.e.\ one involving integrals of local geometric quantities, and
then to use this to study the variational theory of the renormalized area functional $\calA$. In order to do
this properly, we must study the moduli space of all properly embedded minimal surfaces with embedded
asymptotic boundary, as this is the natural domain of $\calA$. This leads to a subsidiary investigation
of the structure of these moduli spaces and the degree theory for the natural map taking a minimal surface 
to its boundary curve. We also consider the renormalized area of a larger class of nonminimal surfaces. 
We calculate the first and second variations of $\calA$; interestingly, these are expressed as integrals of 
global quantities over the boundary curve. While we do not touch on all aspects of this variational
theory, we are able to characterize the nondegenerate critical points amongst surfaces in hyperbolic
space, and give some estimates for the numerical range of $\calA$. Finally, we show the relationship of
$\calA$ to the much-studied Willmore functional $\calW$, which suggests that $\calA$ is the correct conformally
natural generalization of $\calW$ to surfaces with boundary. 

There are strong motivations from the AdS/CFT correspondence in string theory for studying the renormalized area, 
and we shall explain some of these below, after describing the mathematical context more carefully.

Our results about minimal surfaces parallel a number of known results about Poincar\'e-Einstein (PE) spaces,
so we describe these together. Let $(M,g)$ be a PE space; this means that $M$ is a manifold with boundary,
$g = \rho^{-2}\olg$ where $\rho$ is a boundary defining function for $\del M$ and $\olg$ is smooth and nondegenerate
up to the boundary, and $g$ is Einstein. There is a well-defined conformal class $\frakc(g)$ on $\del M$, called 
the conformal infinity of $g$, which should be regarded as the asymptotic boundary value of $g$.
The space of all PE metrics (with some fixed regularity) on the interior of a given manifold with boundary $M$ is a
Banach manifold, and the conformal infinity map from this to the space of conformal structures on $\del M$ (which
also has the structure of a Banach manifold) is Fredholm of degree $0$. These facts were proved by Anderson \cite{An4},
see also Biquard \cite{Bi} and Lee \cite{Lee}. Most existence results for PE metrics are perturbative in nature,
but Anderson established a scheme to obtain a much broader existence theory when $\dim M = 4$ using degree theory \cite{An5}.
One key ingredient is the properness of this conformal infinity boundary value map over the preimage of scalar positive 
conformal classes on $\del M$. There are substantial technicalities in making all of this work; recent work of Chang 
and Yang \cite{CY} clarifies some of this.

We first prove an analogous result for properly embedded minimal submanifolds:
\begin{theorem}
Suppose that $M = \HH^3/\Gamma$ is a convex cocompact hyperbolic manifold, and let $\calM_k(M)$ be the space of
properly embedded minimal surfaces in $M$ of genus $k$ with asymptotic boundary curve a $\calC^{3,\alpha}$ embedded
closed (but possibly disconnected) curve in $\del M$. Let $\calE$ denote the space of all $\calC^{3,\alpha}$ closed
embedded curves in $\del M$. Then both $\calM_k(M)$ and $\calE$ are Banach manifolds, and the natural map
\[
\Pi: \calM_k(M) \longrightarrow \calE
\]
is a smooth proper Fredholm map of index $0$. 
\end{theorem}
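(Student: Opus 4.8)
The plan is to realize $\calM_k(M)$ locally as the zero set of the mean curvature operator and to analyze the resulting problem with the $0$-calculus for asymptotically hyperbolic manifolds. That $\calE$ is a Banach manifold is routine: the $\calC^{3,\alpha}$ embeddings of a fixed disjoint union of circles into the surface $\del M$ form an open subset of a Banach space, and after quotienting by reparametrization the tangent space at $\gamma$ is the space $\calC^{3,\alpha}(\gamma;N\gamma)$ of normal vector fields of $\gamma$ in $\del M$. For $\calM_k(M)$, fix $Y$ and write nearby surfaces as normal graphs $Y_u=\{\exp_p(u(p)\nu(p)):p\in Y\}$; minimality becomes a quasilinear second order equation $\calH(u)=0$ with $\calH(0)=0$, whose linearization is the Jacobi operator
\[
L \;=\; \Delta_Y + |\II|^2 + \Ric(\nu,\nu) \;=\; \Delta_Y + |\II|^2 - 2 ,
\]
the last equality using $\Ric=-2g$ on $\HH^3$. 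By the Gauss equation $(Y,g|_Y)$ has curvature $-1-\tfrac12|\II|^2$, so $Y$ is asymptotically hyperbolic of dimension two, $|\II|^2\to0$ at its ends, and the normal operator of $L$ is that of $\Delta_{\HH^2}-2$, whose indicial roots are $-1$ and $2$.

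With these indicial roots the mapping theory of the $0$-calculus applies. I would take the deformation space $\calX$ to consist of displacements $u$ with a free polyhomogeneous leading term $a\,\rho^{-1}$, $a\in\calC^{3,\alpha}(\gamma)$, and a remainder decaying like $\rho^{2}$, and the target $\calY$ to be the corresponding decaying space; then $L:\calX\to\calY$ is Fredholm. The essential point is that $L$ is in fact \emph{surjective}: a putative cokernel element is an $L^2$ Jacobi field $v$, and Green's identity on $\{\rho>\epsilon\}$ produces, in the limit $\epsilon\to0$, a boundary pairing between the $\rho^{2}$-trace of $v$ and the free leading coefficient $a$ of the domain; since this pairing is nontrivial, $v$ cannot annihilate the image. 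Hence $\calH$ is a submersion at $0$ and the implicit function theorem presents $\calM_k(M)$ near $Y$ as a Banach submanifold with tangent space the admissible Jacobi fields $T_Y\calM_k(M)=\ker L$. The map $\Pi$ assigns to $u$ its leading coefficient $a$, the normal displacement of $\gamma$ in $\del M$; it is smooth with $d\Pi_Y(u)=a$.

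To compute the index of $\Pi$ it suffices to compute that of $d\Pi_Y$. Its kernel consists of the admissible Jacobi fields with $a=0$, that is the $L^2$ solutions of $Lu=0$, a finite-dimensional space $K$. Its cokernel measures which leading data $a$ are attained by genuine Jacobi fields: solving $Lu=0$ with prescribed $a$ is a scattering problem whose obstruction, by the same Green's identity, is exactly the $L^2(\gamma)$-orthogonality of $a$ to the $\rho^{2}$-traces of the elements of $K$. Because the indicial roots $-1$ and $2$ are symmetric about $n/2=\tfrac12$ and $L$ is formally self-adjoint, the trace map $K\to\calC^{3,\alpha}(\gamma)$ is injective (a decaying Jacobi field with vanishing $\rho^{2}$-trace is $O(\rho^{2+\e})$, hence zero by unique continuation). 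Thus $\mathrm{coker}\,d\Pi_Y\cong K\cong\ker d\Pi_Y$ and the index is $0$.

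The properness of $\Pi$ is the crux, and I expect it to be the main obstacle. Given $\gamma_i\to\gamma$ in $\calE$ and $Y_i\in\Pi^{-1}(\gamma_i)$, one must extract a subsequence converging in $\calM_k(M)$ to a surface over $\gamma$. This requires a uniform area bound from the convergent boundary data together with a barrier comparison near $\del M$; interior curvature estimates for embedded minimal surfaces of fixed genus in the negatively curved ambient, where the maximum principle excludes closed minimal components and embeddedness with the genus bound precludes genus-changing bubbling; and, most delicately, uniform control up to the conformal boundary, which I would obtain by combining the polyhomogeneous asymptotics above with boundary regularity for the asymptotic Plateau problem in the spirit of Anderson and of Hardt--Lin. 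These yield smooth subsequential convergence on compacta and $\calC^{3,\alpha}$ convergence of the boundary expansions, so the limit lies in $\Pi^{-1}(\gamma)$. Establishing the uniform asymptotic estimate near $\del M$, so that no area or topology escapes into the ends, is the step demanding the most care.
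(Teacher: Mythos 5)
Your construction of the Banach manifold structure and your index computation coincide in essence with the paper's own argument: the paper likewise realizes nearby surfaces as normal graphs, identifies the linearization as the Jacobi operator $L_Y=\Delta_Y+|A_Y|^2-2$, a uniformly degenerate elliptic operator with normal operator $\Delta_{\HH^2}-2$ and indicial roots $-1$ and $2$, gets Fredholmness of index $0$ on the weighted spaces $x^\mu\Lambda^{2,\alpha}_0(Y)$ for $\mu\in(-1,2)$, and then defeats a putative cokernel element $u$ by exactly your boundary pairing: Green's identity gives $\int_Y L_Y(\eta+\hat{\phi})\,u=-2\int_\gamma \hat{\psi}\,u_0$, so $u$ cannot annihilate the image unless its trace $u_0$ vanishes, which unique continuation \cite{Ma-uc} forbids. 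The identification $\mathrm{coker}\, D\Pi_Y\cong\ker D\Pi_Y$ via the trace pairing is also the paper's route. One small imprecision: a decaying Jacobi field that is merely $O(\rho^{2+\e})$ is not within the hypotheses of the unique continuation theorem; you should note that vanishing of the $\rho^2$-trace kills the \emph{entire} formal expansion (all higher coefficients are local in the two free traces), so the field vanishes to infinite order, and only then invoke \cite{Ma-uc}.

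The genuine gap is properness, which you rightly call the crux but then leave as a plan, and the one concrete mechanism you do propose would fail. A ``uniform area bound from the convergent boundary data together with a barrier comparison'' controls the area only of area-\emph{minimizing} surfaces; the surfaces in $\Pi^{-1}(\gamma_j)$ are merely minimal, possibly unstable, and comparison with competitors having the same boundary gives no upper bound on their area. The paper obtains the area bound intrinsically: minimality in a hyperbolic $3$-manifold forces Gauss curvature $K\le-1$, so Gauss--Bonnet yields $\mbox{Area}\,(Y_{j,\e})\le -2\pi\chi(Y_j)+\int_{\del Y_{j,\e}}\kappa\, ds$, and the geodesic curvature of the cut $\{x=\e\}$ is bounded by interior Schauder estimates once $\e$ is fixed. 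The second missing ingredient is the step you defer, uniform control near $\del M$: the paper proves the $Y_j$ are horizontal graphs over the vertical cylinder on a strip $0<x<\e$ of height \emph{uniform} in $j$ by a blow-up argument --- if the graph gradients degenerate at heights $x_j\searrow 0$, a hyperbolic rescaling by $1/x_j$ produces a complete minimal surface in $\HH^3$ spanning a straight line which is a horizontal graph with non-vertical tangent plane at a point at bounded height, contradicting the fact that the unique minimal surface bounded by a line is the totally geodesic plane. This uniform strip is what rules out handles or area escaping into the ends; combined with the genus and area bounds it allows the compactness theorems of Anderson \cite{An3} and White \cite{Wh} to be applied to the truncations $Y_{j,\e}$. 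Without these two mechanisms --- the Gauss--Bonnet area bound and the rescaling argument (or substitutes for them) --- your properness argument does not close, and with it the degree-theoretic content of the theorem is unproved.
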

These properties of $\Pi$ imply the existence of a $\ZZ$-valued degree for it, which yields many refinements of 
the existence theory for these minimal surfaces. Some consequences will be described in \S 4.  

The proof of most of this uses various well-known tools, hence this can be regarded as a good toy model for the 
corresponding result about four-dimensional Poincar\'e-Einstein spaces. Note that the use of degree theory for the 
boundary map of minimal surfaces goes back to work of Tromba \cite{Tr} in the 1970's and White \cite{Wh2} in the 
1980's, and indeed those papers provided some of the inspiration for Anderson's proposal to use degree theory in 
the Einstein setting. A special case of this degree theory, for genus zero surfaces, was developed in \cite{Cos1}. 

Now return to the PE setting. Assuming the conformal infinity of the PE metric $g$ is sufficiently regular, then
$g$ itself has an expansion up to some order at the boundary. When $\dim M = 4$, this has the form
\begin{equation}
g = \frac{dx^2 + h(x)}{x^2}, \qquad h(x) \sim h_0 + x^2 h_2 + x^3 h_3 + \ldots;
\label{eq:fefgrexp}
\end{equation}
here each $h_j$ is a symmetric $2$-tensor on $\del M$; in particular, $h_0$ is a metric representing $\frakc(g)$
and $h_3$ is trace- and divergence-free with respect to $h_0$. All other $h_j$ are determined in terms of
these two tensors. Furthermore, $x$ is a special boundary defining function naturally associated to the choice of $h_0$.
The volume form $dV_g$ has a corresponding expansion
\[
dV_g \sim \frac{A_0}{x^4} + \frac{A_2}{x^2} + A_4 + \ldots;
\]
the $x^{-3}$ and $x^{-1}$ terms are absent due to the absence of the $h_1$ term and the vanishing trace
of $h_3$. The volume of $\{x \geq \e\}$ is obviously finite for each $\e > 0$ and has an expansion
as $\e \searrow 0$ of the form
\[
\mbox{Vol}\,(\{x \geq \e\}) \sim \frac{\alpha_0}{\e^3} + \frac{\alpha_1}{\e} + \calV(M,g) + \ldots.
\]
The constant term in this expansion is by definition the renormalized volume of $(M,g)$. The key fact, 
first proved by the physicists Henningson and Skenderis \cite{HS}, cf.\ \cite{Gr} for a careful mathematical 
treatment, is that this is well-defined independently of the choice of metric $h_0 \in \frakc(g)$. The
definition of renormalized volume extends to arbitrary dimensions, and they show that it is well defined 
when $\dim M$ is even; when $\dim M$ is odd, however, it is not well-defined and has a simple transformation 
law under change of representative $h_0$. For simplicity here we focus on the four-dimensional case.  Using the
Einstein condition in the Gauss-Bonnet formula, Anderson \cite{An4} noted that
\begin{equation}
\calV(M,g) = \frac{4\pi^2}{3} \chi(M) - \frac{1}{6} \int_M |W|^2\, dV_g;
\label{eq:GB4d}
\end{equation}
here $W$ is the Weyl tensor, and the integral is convergent since $|W|^2$ is pointwise conformally invariant of 
weight $-4$. Anderson also computed a formula giving the infinitesimal variation of the renormalized volume in 
the direction of an infinitesimal Einstein deformation $\kappa$:
\begin{equation}
\left. D\calV \right|_g(\kappa) = -\frac{1}{4} \int_{\del M} \langle \kappa_0, h_n \rangle\, dV_{h_0},
\label{eq:varrevol}
\end{equation}
in terms of the leading term in the expansion $\kappa \sim \kappa_0 + x \kappa_1 + \ldots$. A much easier derivation
of this formula is given in \cite{Alb}.  Again, this extends immediately to all even dimensions.
It follows from this that (when $n$ is even), $\HH^{n}$, and indeed any convex cocompact hyperbolic quotient
$\HH^{n}/\Gamma$, is a critical point of $\calV$. The variational problem for renormalized volume remains unstudied.
When $\dim M = 4$, $\calV$ is closely related to the $\sigma_2$ functional of the underlying incomplete metric
on $M$, and there are some interesting rigidity results using it, see \cite{CGY}. There are also several nicely 
geometric results about renormalized volume in $3$ dimensions \cite{W}, \cite{KS} (recall that it depends on some 
choices here, so one has not simply a number but rather a functional on a given conformal class of the boundary surface).  

Shortly after \cite{HS}, and motivated by the same string-theoretic concerns, Graham and Witten \cite{GW} proved the 
existence of a well-defined renormalized area $\calA$ for properly embedded minimal submanifolds $Y$ in a PE space where
the boundary of $Y$ is also embedded in $\del M$. Two dimensions is critical for minimal surfaces in roughly the same 
way that four dimensions is critical for Einstein metrics, so it is reasonable that the results above about renormalized 
volume of four-dimensional PE metrics have analogues for properly embedded minimal surfaces, and this is indeed true.
Our second main result is an explicit formula for $\calA$ and its first and second variations:
\begin{theorem}
Let $Y \in \calM(M)$ have a $\calC^{3,\alpha}$ embedded boundary curve $\gamma$. Then
\[
\calA(Y) = -2\pi \chi(Y) - \frac12 \int_Y |\whk|^2\, dA,
\]
where $\whk$ is the trace-free second fundamental form of $Y$; the integral is convergent since $|\whk|^2\, dA$ is
invariant under conformal changes of the ambient metric. Furthermore, if $0$ is not in the spectrum of the Jacobi
operator $L_Y$ (in which case we say that $Y$ is non-degenerate), and $\dot{\phi}$ is a Jacobi field on $Y$ (i.e.\ $L_Y 
\dot{\phi} =0$), which thus corresponds to a one-parameter family of minimal surfaces around $Y$, then (relative to
a normalization which will be explained later), $\dot{\phi} \sim \dot{\phi}_0 + x \dot{\phi}_1 + \ldots$ and
\[
\left. D\calA\right|_Y(\phi) = -3 \int_{\gamma} (\dot{\phi})_0 \, u_3\, ds;
\]
here $u_3$ is the coefficient of $x^3$ in the expansion for the function $u$ which gives a graph parametrization of $Y$
over the vertical cylinder $\gamma \times [0,x_0)$. Furthermore,
\[
\left. D^2\calA\right|_Y(\phi,\phi) = - \frac12 \int_\gamma \dot{\phi}_0\, \dot{\phi}_3\, ds;
\]
as we explain later, this shows that the Hessian of $\calA$ is represented by the Dirichlet-to-Neumann
operator for the Jacobi operator $L_Y$. 
Finally, if $M = \HH^3$, the unique nondegenerate critical points of $\calA$ are the totally geodesic
copies of $\HH^2$ (so $\gamma$ is a round circle).
\end{theorem}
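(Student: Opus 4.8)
The plan is to read off the critical points from the first variation formula and then to use the explicit expression $\calA(Y) = -2\pi\chi(Y) - \tfrac12\int_Y|\whk|^2\,dA$ to force the geometry. Throughout we specialize to $M=\HH^3$. First I would reduce the critical point condition to the pointwise vanishing of $u_3$ along $\gamma$. By hypothesis $Y$ is nondegenerate, i.e.\ $0$ is not in the spectrum of $L_Y$; equivalently, the only Jacobi fields decaying to leading order at the boundary are trivial, so $\ker D\Pi_Y = 0$. Since $\Pi$ is Fredholm of index $0$ by Theorem 1.1, $D\Pi_Y$ is then an isomorphism, and the admissible boundary data $(\dot\phi)_0$ range over all normal fields along $\gamma$. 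The first variation formula $D\calA|_Y(\phi) = -3\int_\gamma (\dot\phi)_0\,u_3\,ds$ therefore shows that $Y$ is a critical point if and only if $u_3\equiv 0$ on $\gamma$.

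The heart of the matter is the rigidity statement: a properly embedded minimal surface with $u_3\equiv 0$ must be totally geodesic. To attack this I would make the boundary expansion explicit. Writing $Y$ as the graph $r=u(s,x)$ over the vertical cylinder $\gamma\times[0,x_0)$, the condition $\del Y=\gamma$ gives $u_0=0$, orthogonality of minimal surfaces to $\del M$ gives $u_1=0$, and the minimal surface equation determines $u_2$ locally in terms of the geodesic curvature $\kappa$ of $\gamma$ (for the round hemisphere of radius $R$ one finds $u_2=-\tfrac12\kappa$); the coefficient $u_3$ is the first globally determined, recessive term, playing exactly the role of $h_3$ in the Fefferman--Graham expansion \eqref{eq:fefgrexp}. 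I would then identify $u_3$ with a conformally covariant third-order curvature quantity attached to $(\gamma,Y)$ and show that its vanishing propagates inward to force $\whk\equiv 0$. The natural tool is the Willmore relationship flagged in the introduction: $-\tfrac12\int_Y|\whk|^2\,dA = \calA(Y)+2\pi\chi(Y)$ is, up to a topological constant, a conformally invariant Willmore-type energy $\calW$, and a minimal surface in $\HH^3$ is automatically a (free-boundary) critical point of $\calW$ whose boundary flux is governed precisely by $u_3$. I expect this identification, together with the unique continuation needed to pass from the single boundary condition $u_3\equiv 0$ to the interior vanishing of $\whk$, to be the main obstacle; the subtlety is that $u_3$ controls only the recessive data, so the rigidity cannot come from a naive integral identity and must genuinely exploit the elliptic structure of the minimal surface (or Willmore) equation.

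Granting $\whk\equiv 0$, the surface is totally umbilic, and since it is also minimal its full second fundamental form $\II$ vanishes, so $Y$ is totally geodesic; hence $Y$ is a hyperbolic plane and its closed asymptotic boundary $\gamma$ is a round circle. For the converse and the nondegeneracy assertion I would argue directly on $Y=\HH^2\subset\HH^3$. The explicit hemisphere $u(s,x)=\sqrt{R^2-x^2}-R=-\tfrac{x^2}{2R}-\tfrac{x^4}{8R^3}-\cdots$ has $u_3\equiv 0$, so such a $Y$ is indeed critical. Finally, since $Y$ is totally geodesic one has $|\whk|^2=0$ and $\Ric(\nu,\nu)=-2$, so the Jacobi operator collapses to $L_Y = \Delta_{\HH^2}-2$; because $\mathrm{spec}(\Delta_{\HH^2})=(-\infty,-\tfrac14]$ we obtain $\mathrm{spec}(L_Y)=(-\infty,-\tfrac94]$, which does not contain $0$. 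Thus the totally geodesic copies of $\HH^2$ are nondegenerate critical points, and by the rigidity above they are the only ones.
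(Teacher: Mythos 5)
Your proposal addresses only the final clause of the theorem: the explicit formula for $\calA$, the first variation formula, and the second variation formula are taken as inputs rather than proved, whereas in the paper these occupy all of \S 3 and \S 6 (Gauss equation plus Gauss--Bonnet plus conformal invariance of $|\whk|^2\,dA$ for the area formula; a careful integration by parts with boundary asymptotics for $D\calA$; a lengthy trigonometric computation relating $\dot{u}_{3}$ to $\dot{\phi}_3$ for $D^2\calA$). Within the part you do attempt, the opening reduction is sound and matches Corollary \ref{co:crit}: nondegeneracy makes $D\Pi_Y$ an isomorphism, the boundary data $\dot{\phi}_0$ sweep out all normal fields along $\gamma$, and criticality is equivalent to $u_3 \equiv 0$. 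Your verification that totally geodesic planes are nondegenerate critical points (even powers in the hemisphere expansion, so $u_3 \equiv 0$; $L_Y = \Delta_{\HH^2} - 2$ with spectrum in $(-\infty, -\tfrac94]$) is also correct, modulo the orientation convention (the paper's inward normal gives $u_2 = \tfrac12\kappa$, not $-\tfrac12\kappa$).

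The genuine gap is the rigidity step, which you yourself flag as ``the main obstacle'' but never supply, and the route you sketch cannot work as stated. The coefficient $u_3$ is the recessive half of the Cauchy data $(\gamma, u_3)$, so its vanishing along $\gamma$ carries no local information: the unique continuation theorem invoked in \S 2.3 requires matching \emph{both} $\gamma$ and $u_3$ with those of a second minimal surface, and over a non-circular boundary arc there is no totally geodesic comparison surface to match against -- locally there exist minimal graphs with $u_3 \equiv 0$ over arcs of arbitrary curves, so no local elliptic or Willmore-flux identity can propagate $u_3 \equiv 0$ inward to $\whk \equiv 0$. Any proof must inject global information, and the paper's \S 7 does exactly this: Proposition \ref{oscullating} produces a point of $\gamma$ whose osculating circle is inscribed in one complementary domain; after a M\"obius transformation $\gamma$ lies in the closed upper half-plane and osculates the $y_1$-axis at the origin, so the horizontal graph function satisfies the global sign condition $u \geq 0$ together with $u(s,x) = a(s)x^2 + \calO(x^{3+\alpha})$, $a(0) = 0$, with no $x^3$ term by criticality. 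The barrier $u_c = u - c x^{3+\beta}$, $0 < \beta < \alpha$, then attains a negative interior minimum in a small box, while a direct computation with the degenerate minimal surface equation (\ref{pde}) and scale-invariant Schauder estimates gives $\calF(u_c) < 0$ throughout the box for $\delta$ small -- contradicting the maximum principle at the interior minimum. It is this inscribed-osculating-circle positivity, entirely absent from your proposal, that converts the single recessive boundary condition $u_3 \equiv 0$ into the conclusion that $\gamma$ is a round circle and $Y$ a geodesic disk.
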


We now turn to the physical precursors of all of this. Maldacena's pioneering work \cite{m:wllNt} proposes that 
in the large t'Hooft coupling regime, the expectation value of the Wilson loop operator corresponding to some closed 
loop $\gamma \subset \del M$ should be given by the area of the minimal surface $Y\subset M^{n+1}$ with asymptotic
boundary $\gamma$. The papers \cite{m:wllNt} and \cite{dgo:wlms} already point out that one must introduce an 
area renormalization, which motivated \cite{GW}. 

Quite recently, it has also been suggested (\cite{rt:hdeefa}, \cite{rt:ahee}) that this renormalized area 
be used to measure the entanglement entropy of a particular region in the CFT. More specifically, \cite{rt:hdeefa} 
(see also \cite{ht:assee}) proposes an `area law': for the model $(\HH^{n+1}, \mathbb{S}^n)$, the information of 
a domain $\Omega\subset \mathbb{S}^n$ should correspond to the region in $\HH^{n+1}$ enclosed by a minimal 
submanifold with asymptotic boundary $\partial \Omega$ (which need not be well-defined, of course, since the minimal
submanifold is not unique), and in particular, the entanglement entropy of a domain $\Omega \subset \mathbb{S}^2$ 
should correspond to the renormalized area of the minimal surface $Y\subset \HH^3$ with boundary $\gamma=\partial 
\Omega$ (see formula (1.5) in \cite{rt:hdeefa}). This assertion is checked in the lowest dimensional case $n=1$ 
in \cite{rt:ahee}, \cite{rt:hdeefa}, and special examples are also presented in \cite{ht:assee} for $n=2$ -- but 
the validity of the assertion in higher dimensions is disputed in \cite{st:eetah}. 

Motivated by these proposals, substantial effort has been devoted in several recent physics papers to understanding
the geometric features and renormalized area of various simple cases of minimal surfaces in $\HH^3$. For example,  
in \cite{dgo:wlms} the authors compute the renormalized area of totally geodesic planes; in \cite{ht:assee} 
Hirata and Takayanagi study the existence of minimal surfaces with two disconected circles as asymptotic boundaries 
and also estimate the renormalized area of those surfaces; Maldacena \cite{m:wllNt} studies the case of a rectangle 
where the length $T$ of one side approaches infinity. Furthermore, Drukker-Gross-Ooguri, \cite{dgo:wlms} and 
Polyakov-Rychkov, \cite{pr:ldacc} have sought to check the proposed formula in \cite{m:wllNt} relating
the expectation value of the Wilson loop in CFT with the renormalized area of a minimal surface in 
AdS. This verification involves calculating the first and second 
variations of the renormalized area functional with respect to deformations of the loop $\gamma$. 
Since those authors did not have a usable explicit formula for the renormalized area, their calculations 
required justification for dropping certain divergent terms; in contrast, our local formul\ae\  allow
for straightforward calculations.

Since there seems to be active and continuing interest in these proposals relating renormalized area 
with the expectation values of Wilson loop operators, the loop equation and to entanglement entropy,
we hope that our results will facilitate further investigations in this area. 

Our paper is structured as follows: in section \S 2 we present some  background material needed for this work, 
on Poincar\'e-Einstein metrics, uniformly degenerate elliptic operators and embedded minimal surfaces in convex
co-compact hyperbolic 3-manifolds. The local formula for $\calA$ is proved in section \S 3, and
certain global aspects of this functional are studied in \S 5. The intervening \S 4 develops the moduli space
theory of properly embedded minimal surfaces. This provides the correct setting in which to derive the first 
and second variation formul\ae, which appears in \S 6. \S 7 characterizes the nondegenerate critical points 
of $\calA$ when the ambient space is hyperbolic $3$-space, $\HH^3$. Finally, in \S 8 we discuss the relationship 
of $\calA$ and the Willmore functional.

The first author is very grateful to Chris Herzog, Juan Maldacena and A. M. Polyakov for useful conversations. 
The second author wishes to thank Joel Hass, Steve Kerckhoff and particularly Brian White for helpful conversations.

\section{Geometric and analytic preliminaries}
We now give precise definitions of the spaces and submanifolds we shall be working with and explain
some of their properties. We also discuss some basic results about elliptic operators on these spaces.

\subsection{Conformally compact and Poincar\'e-Einstein spaces and convex cocompact hyperbolic $3$-manifolds}
A Riemannian manifold $(M,g)$ is called conformally compact if $M$ is the interior of a smooth compact manifold
with boundary and $g = \rho^{-2} \olg$ where $\rho$ is a defining function for $\del M$ and $\olg$ is a metric
smooth and nondegenerate up to $\del M$. Any such metric is complete and has sectional curvatures tending to
$-|d\rho|_{\olg}^2(q)$ upon approach to any point $q \in \del M$. In particular, if $|d\rho|^2_{\olg}$ is
constant along $\del M$, we say that $(M,g)$ is asymptotically hyperbolic (AH).  To any conformally compact
metric $g$ one may associate a conformal class on $\del M$:
\[
\frakc(g) = \left[ \left. \rho^2 g \right|_{T\del M}
\right],
\]
which is obviously independent of the choice of defining function $\rho$. This conformal equivalence class
is called the conformal infinity of $g$.

Any AH metric has a normal form, due to Graham and Lee \cite{GL}. Let $(M,g)$ be an AH space and fix any
metric $h_0$ representing the conformal class $\frakc(g)$. Then there is a unique defining function $x$
for $\del M$, defined in some neighborhood $\calU$ of the boundary, which satisfies the two conditions
\[
|d\log x|_g^2 \equiv 1, \qquad \left. \olg \right|_{T\del M} = h_0, \ \ \mbox{where}\ \ \olg = x^2 g.
\]
The flow lines for the gradient $\nabla^{\olg}x$ give a product decomposition $\calU \cong [0,x_0) \times \del M$,
in terms of which the pullback of the metric $g$ takes the form
\begin{equation}
g = \frac{dx^2 + h(x)}{x^2}, \qquad h(x) \sim h_0 + x h_1 + x^2 h_2 + \ldots
\label{eq:GLnf}
\end{equation}
The defining function $x$ associated to the boundary metric $h_0$ will be called a special boundary defining
function (bdf).

A case of particular special interest is when $(M,g)$ is Poincar\'e-Einstein (PE), which means simply that it is
both conformally compact and Einstein. These metrics were introduced by Fefferman and Graham \cite{FG} as a way of
canonically associating a Riemannian metric on an ambient $(n+1)$-manifold to a conformal class on an $n$-manifold,
with the goal of finding new conformal invariants on the boundary via Riemannian invariants of the ambient manifold.
If the conformal infinity of such a $g$ is smooth, then the family of tensors $h(x)$ in (\ref{eq:GLnf})
has a complete expansion
in powers of $x$ (and also powers of $x^{n-1}\log x$ when $n = \dim X$ is odd, $n \geq 5$). The coefficients
$h_0$ and $h_{n-1}$ are formally undetermined, but all other $h_j$ can be expressed as local differential operators
applied to these two coefficients; it is thus natural to think of the pair $(h_0,h_{n-1})$ as the Cauchy data of $g$.

In this paper we shall be primarily concerned with the
three-dimensional case. If $(M^3,g)$ is PE, then $M$ is isometric
to a convex cocompact quotient $\HH^3/\Gamma$. (Convex cocompact
means that $\Gamma$ is geometrically finite and has no parabolic
elements; equivalently, the quotient by $\Gamma$ of the convex
hull (in $\HH^3$) of the limit set $\Lambda(\Gamma)$ is compact in
$M$.) The Fefferman-Graham expansion for $g$ simplifies then, and
has a special form where only $h_0$, $h_2$ and $h_4$ are nonzero,
see \cite{Ep} and Epstein's appendix in \cite{PP}. These
coefficients can be calculated in terms of the metric and second
fundamental form of any one of the level sets $\{x =
\mbox{const.}\,\}$, and the special bdf $x$ then has the property
that $-\log x$ is the distance function to this level set (up to
an additive constant).

\subsection{Uniformly degenerate operators}
We shall be using results about the mapping and regularity properties for elliptic operators which
are uniformly degenerate. The theory here is drawn from \cite{Ma-edge}, but see also \cite{Lee}.

Let $X$ be a manifold with boundary, and suppose that $(x,y)$ is a local chart near some boundary point,
where $x$ is a boundary defining function and $y$ restricts to coordinates along the boundary. A differential
operator $L$ is called uniformly degenerate if in any such chart it takes the form
\[
L = \sum_{j+|\alpha|\leq m} a_{j,\alpha}(x,y) (x\del_x)^j (x\del_y)^\alpha.
\]
We assume that the coefficients are smooth, or at least $\calC^{2,\alpha}$ up to $\del X$. There is a well-defined
uniformly degenerate symbol
\[
{}^0\sigma_m(L)(x,y,\xi,\eta) = \sum_{j+|\alpha| =m} a_{j,\alpha}(x,y)\xi^j \eta^\alpha
\]
and $L$ is elliptic in this category of objects if this symbol is
invertible for all $(x,y)$ and $(\xi,\eta) \neq 0$. Unlike in the
standard interior case, there is a further model which must be
studied, called the normal operator, which is defined by
\[
N(L) = \sum_{j+|\alpha| \leq m} a_{j,\alpha}(0,y) (t\del_t)^j (t\del_v)^\alpha,
\]
where $(t,v)$ are linear coordinates on the half-space $\RR^+_t \times \RR^\ell_v$, $\ell + 1 = \dim X$.
Finally, for any such operator, we define its set of indicial roots to be the values of $\mu$
for which $L x^\mu = \calO(x^{\mu + 1})$. (This definition must be modified slightly when $L$ is
a system.)  These values are the roots of the indicial polynomial $\sum_{j \leq m} a_{j0}(0,y) \mu^j$,
so (in the scalar case) there are exactly $m$ such values. For simplicity, we now restrict to the case
where the degree of $L$ is $2$, and list the indicial roots as $\mu_1$ and $\mu_2$.

We shall let these operators act on weighted Sobolev and H\"older spaces of functions. By definition
$H^k_0(X)$ consists of functions which lie $L^2$ along with all derivatives up to order $k$ with
respect to the vector fields $x\del_x$ and $x\del_y$. Similarly, $\Lambda^{k,\alpha}_0$ denotes the
H\"older space where the derivatives and difference quotients are measured with respect to these same
vector fields. If $E$ is any function space, then $x^\mu E$ denotes the set of functions
$x^\mu v$ where $v \in E$.

The basic result we need is the following:
\begin{proposition}
Let $L$ be a uniformly degenerate operator of degree $2$ on the compact manifold with boundary $X$,
and suppose that $L$ is uniformly degenerate elliptic. If $N(L): t^{\mu - 1/2}H^2_0(dtdv) \longrightarrow
t^{\mu-1/2}L^2(dtdv)$ is an isomorphism for one value of $\mu \in (\mu_1,\mu_2)$, then it is an
isomorphism for every $\mu \in (\mu_1,\mu_2)$, and for all such $\mu$,
\[
L: x^\mu \Lambda^{k+2,\alpha}_0(X) \longrightarrow x^{\mu} \Lambda^{k,\alpha}_0(X)
\]
is Fredholm, with nullspace contained in $x^{\mu_2}\Lambda^{\ell,\alpha}_0(X)$ for every $\ell$.
If $N(L)$ is only surjective as a map $t^{\mu-1/2}H^2_0(dtdv) \to t^{\mu-1/2}L^2(dtdv)$ but its
nullspace is nontrivial, then $L$
itself still has closed range of finite codimension, but an infinite dimensional kernel.
\label{pr:eeo}
\end{proposition}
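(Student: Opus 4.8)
The plan is to prove Fredholmness by building a two-sided parametrix for $L$ inside the uniformly degenerate (``$0$-'') pseudodifferential calculus of \cite{Ma-edge}, arranged so that the remainder is compact on the weighted spaces. First I would invoke the uniformly degenerate ellipticity hypothesis: inverting the symbol ${}^0\sigma_2(L)$ yields a first parametrix $G_1$ in the small calculus with $LG_1 = I - R_1$ and $G_1 L = I - R_1'$, where $R_1, R_1'$ are residual (interior-smoothing) operators. These errors are not yet compact on $x^\mu\Lambda^{k,\alpha}_0(X)$, since their Schwartz kernels need not decay at the boundary faces of the blown-up double space; correcting this boundary behaviour is exactly where the normal operator enters.

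The essential second step is to improve $G_1$ near $\del X$ using the assumed invertibility of $N(L)$. Since $N(L)$ is the model obtained by freezing the coefficients at $\del X$ and rescaling, its inverse (guaranteed on $t^{\mu-1/2}H^2_0$) can be assembled, via the composition and mapping theorems of the calculus, into a boundary parametrix cancelling the leading boundary term of $R_1$. Iterating produces $G$ with $LG = I - R$ and $GL = I - R'$, where $R,R'$ now gain positive order of vanishing at the relevant boundary face and are therefore compact on $x^\mu\Lambda^{k,\alpha}_0(X)$; this is the Fredholm conclusion. The technical heart of the argument --- and the step I expect to be the main obstacle --- is precisely this passage from invertibility of the model operator to a genuinely \emph{compact} global remainder, which depends on the composition calculus and the weighted mapping properties of $0$-operators established in \cite{Ma-edge}.

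To see that invertibility of $N(L)$ for one $\mu\in(\mu_1,\mu_2)$ propagates to the whole interval, I would pass to the Mellin transform in the normal variable $t$ (together with the Fourier transform in $v$), which conjugates the dilation-invariant operator $N(L)$ to its indicial family. The values of $\mu$ for which $N(L)$ fails to be invertible on $t^{\mu-1/2}L^2$ form the boundary spectrum, a discrete set whose real parts are the indicial roots; since the open interval $(\mu_1,\mu_2)$ contains no indicial root, a contour-shift and analytic-Fredholm argument shows that invertibility, once established at a single $\mu$, persists for every $\mu$ in the interval. The same indicial analysis controls the nullspace: any $u\in x^\mu\Lambda^{k+2,\alpha}_0(X)$ with $Lu=0$ has, by the normal operator, a leading boundary asymptotic governed by the indicial roots, and as $u = \calO(x^\mu)$ decays faster than $x^{\mu_1}$, the only admissible leading exponent above $\mu_1$ is $\mu_2$; boundary (elliptic) regularity in the $0$-calculus then upgrades $u$ to a polyhomogeneous conormal function, so that $u\in x^{\mu_2}\Lambda^{\ell,\alpha}_0(X)$ for every $\ell$.

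Finally, when $N(L)$ is only surjective but has nontrivial nullspace, the same boundary parametrix construction still furnishes a right parametrix with compact error, so $L$ has closed range of finite codimension, with the cokernel kept finite dimensional precisely by the surjectivity of $N(L)$. However, the nullspace of $N(L)$ is invariant under translations in the boundary variable $v$ and is therefore infinite dimensional; feeding this into the parametrix shows that $L$ itself acquires an infinite-dimensional kernel, which is the asserted dichotomy.
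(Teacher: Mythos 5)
The paper offers no argument of its own for this proposition --- its entire proof is the citation to \cite{Ma-edge} --- and your outline (symbolic inversion in the small uniformly degenerate calculus, then a boundary correction built from $N(L)^{-1}$ so the remainder gains vanishing at the front face and becomes compact on the weighted spaces, with the indicial roots governing both the weight interval $(\mu_1,\mu_2)$ and the $x^{\mu_2}$ decay of the nullspace, and the semi-Fredholm dichotomy coming from the translation-invariant, hence infinite-dimensional, nullspace of $N(L)$) is precisely the strategy of that reference. One minor imprecision worth fixing: the Mellin transform in $t$ conjugates only the \emph{indicial} operator to its indicial family, not the full $N(L)$, which still carries the $t\del_v$ terms; the standard reduction first Fourier transforms in $v$ and rescales $t$ by the dual variable, with the Mellin/indicial analysis entering only to control the $t \to 0$ asymptotics --- but this does not affect your conclusions.
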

The proof is contained in \cite{Ma-edge}.

\subsection{Properly embedded minimal surfaces with embedded asymptotic boundary}
\label{vms}
As explained in the introduction, there is a rich existence theory for properly embedded minimal
or area-minimizing surfaces in convex cocompact hyperbolic $3$-manifolds. Something not treated
in Anderson's original investigations is the boundary regularity. One expects that a properly embedded
minimal surface $Y$ is as regular as its asymptotic boundary curve $\gamma$.  This problem and
its generalization to higher dimensional minimal codimension one submanifolds was investigated by
Lin \cite{Lin}, Hardt and Lin \cite{HL} and Tonegawa \cite{To}. The higher codimension case has apparently
not been treated at all, but is in fact not so difficult using the theory of uniformly degenerate
elliptic operators; we shall come back to this in a later paper. In general dimensions and codimensions,
if $\gamma$ is smooth then any corresponding minimal $Y$ with $\del Y = \gamma$ is polyhomogeneous
at the  boundary, i.e.\ has an expansion in powers of any defining function for $M$ restricted to $Y$;
when $\dim Y$ is even, only positive integer powers appear, while if $\dim Y$ is odd, then powers of
$x^k \log x$ also appear; all of this is completely analogous to the situation for PE metrics. The case
of importance here, however, is covered by the various papers cited above:

\begin{proposition}
\label{tonegreg}
Let $\gamma$ be $\calC^{k,\alpha}$ embedded curve in $\del M$, where $M = \HH^3/\Gamma$ is convex
cocompact, $k \in {\mathbb N}$, $0 < \alpha < 1$. Then $Y$ is $\calC^\infty$ in the
interior of $M$ and $\calC^{k,\alpha}$ up to $\gamma = \overline{Y} \cap \del M$.
\end{proposition}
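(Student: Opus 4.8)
The plan is to treat interior and boundary regularity separately, the former being routine and the latter the substance. In the interior, $Y$ is a minimal surface in the smooth (in fact real-analytic) hyperbolic background, so in local coordinates it satisfies the quasilinear minimal surface equation with smooth coefficients, and standard elliptic bootstrapping gives $Y \in \calC^\infty$ (indeed real-analytic) away from $\gamma$. I would therefore reduce immediately to a neighborhood of a boundary point $p \in \gamma$ and work in the upper half-space model, $g = x^{-2}(dx^2 + dy_1^2 + dy_2^2)$, with $\gamma \subset \{x=0\}$ and $s$ denoting arclength along $\gamma$.

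The first genuine step is to establish that, near $\gamma$, the surface $Y$ is a graph over the vertical cylinder $\gamma \times [0,x_0)$. This requires knowing a priori that $Y$ approaches $\gamma$ in a controlled way and does not fold, which I would obtain from the maximum principle, using the explicit totally geodesic hemispheres as barriers to trap $Y$ and bound its gradient near the boundary. This preliminary geometric input --- boundary continuity together with the graph property --- is exactly what the cited works of Lin, Hardt--Lin and Tonegawa supply in the present codimension-one setting, and it is logically separate from the linear theory used afterward. Granting it, I write $y_2 = u(x,s)$, with $u$ tending to the position of $\gamma$ as $x \to 0$.

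Next I would write out and analyze the equation for $u$. After extracting the conformal factor, minimality is equivalent to $\mathrm{div}(\nabla u/\sqrt{1+|\nabla u|^2}) - \tfrac{2}{x}\,u_x/\sqrt{1+|\nabla u|^2} = 0$, a quasilinear second-order equation whose principal part, after multiplication by $x$, is built from the vector fields $x\del_x$ and $x\del_s$: it is uniformly degenerate in the sense of \S 2.2. Its linearization at the flat solution is $u_{xx} + u_{ss} - \tfrac{2}{x}u_x$, a conjugate of the Jacobi operator $L_Y = \Delta_Y + |\whk|^2 + \Ric(\nu,\nu)$, and one checks it is uniformly degenerate elliptic. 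A direct computation of the indicial polynomial gives $\mu(\mu-3)$, so the indicial roots are the integers $0$ and $3$: the Dirichlet datum at $x^0$ encodes the position of $\gamma$, while the free Cauchy datum appears at $x^3$ (this is the coefficient $u_3$ used later, which vanishes when $Y$ is a totally geodesic plane and $\gamma$ is a round circle). Crucially the roots are integers and, since $\dim Y = 2$ is even, no logarithmic terms enter.

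With this in hand the proof is completed by a uniformly degenerate bootstrap. Proposition~\ref{pr:eeo} provides the Fredholm mapping properties and associated Schauder-type estimates in the weighted H\"older spaces $x^\mu\Lambda^{k,\alpha}_0$ for weights $\mu \in (0,3)$; combined with a contraction-mapping/implicit-function setup for the quasilinear equation, this locates $u$ in these spaces and yields its conormal regularity up to $x=0$. The main obstacle, and the genuinely delicate point, is to upgrade this conormal (weighted) regularity to classical $\calC^{k,\alpha}$ regularity up to the boundary, matching $\gamma$: one uses the integer indicial roots to peel off a partial expansion $u = u_0 + u_1 x + u_2 x^2 + (\text{remainder})$, with each coefficient as regular as the $\calC^{k,\alpha}$ data of $\gamma$ permits and the remainder in a space controlling three ordinary derivatives, iterating the estimates of Proposition~\ref{pr:eeo} one order at a time. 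Since the coefficients of the equation are only $\calC^{k,\alpha}$ (being built from the geometry of $\gamma$), the iteration cannot continue indefinitely, and one must verify that exactly $k$ derivatives plus the $\alpha$-H\"older quotient transfer to $u$ without loss and that no obstruction arises at the integer root $\mu = 3$. This is precisely the regularity established in the cited works of Lin, Hardt--Lin and Tonegawa, which cover the two-dimensional, codimension-one case at hand.
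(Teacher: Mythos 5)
Your proposal follows essentially the same route as the paper: the authors likewise localize to the upper half-space, trap $Y$ between envelopes of geodesic hemispheres to force verticality along $\gamma$, represent $Y$ as a horizontal graph over the vertical cylinder, derive the uniformly degenerate elliptic equation, read off the expansion with $u_2 = \tfrac12\kappa$ and $u_3$ free, and defer the sharp $\calC^{k,\alpha}$ boundary regularity to the cited works of Lin, Hardt--Lin and Tonegawa, exactly as you do. Your indicial roots $0$ and $3$ for the graph-function equation are consistent with the paper's roots $-1$ and $2$ for the Jacobi operator (the normalizations differ by conjugation by $x$, since $\nu = x\olnu$), and the only cosmetic discrepancy is that your displayed equation is the flat-base specialization, omitting the curvature terms $\kappa$ and $w = 1 - \kappa u$ present in the paper's general form (\ref{eq:mse}).
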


We discuss some features of the proof in order to bring out some consequences.
This result is local in $\gamma$, so we may as
well suppose that $M = \HH^3$ and focus on the behaviour of $Y$ near some fixed point $p \in \gamma$. Using
the upper half-space model with coordinates $y \in \RR^2$, $x > 0$, place $p$ at the origin and choose a
local arc-length parametrization $\gamma(s)$ for $\gamma$ (with respect to the standard Euclidean metric
on $\RR^2$). Let $\Gamma$ denote the vertical cylinder over $\gamma$, i.e.\ $\Gamma = \{(y,x) \in \RR^2
\times \RR^+: y \in \gamma\}$; thus near the origin, $\Gamma = \{(\gamma(s),x)\}$.

Choose two smooth families of minimal hemispheres, i.e.\ totally geodesic copies of $\HH^2$, which
lie completely inside and outside of $\gamma$, respectively, and which are tangent to $\gamma$,
and let $\Gamma^{\pm}$ be the envelopes of these families. These are smooth mean-convex surfaces
tangent to $\Gamma$ along $\gamma$, and it is straightforward to use them as barriers to deduce
that $Y$ must lie in the open set between $\Gamma^-$ and $\Gamma^+$. It follows that $Y$ is
vertical along $\gamma$, or equivalently, that its unit normal with respect to the Euclidean metric
on the upper half-space is tangent to $\RR^2 = \{x=0\}$ along $\gamma$. We now write $Y$ as
a horizontal graph over $\Gamma$. More specifically, if $\olN = \olN(s)$ is the unit
normal (again with respect to the Euclidean metric) at a point of $\Gamma$, then there is
a scalar function $u(s,x)$ and a neighbourhood $\calU$ of the origin so that
\[
Y \cap {\mathcal U} =  \{ F(s,x):= (\gamma(s) + u(s,x)\olN(s),x) : |s| < \e, x < \e\}.
\]
The argument above implies that $u(s,0) = \del_s u(s,0) = 0$.

The regularity of $Y$ along $\gamma$ is equivalent to that of this function $u$, and the key point is that $u$
is a solution of a uniformly degenerate elliptic partial differential equation ${\mathcal F}(u) = 0$
corresponding to the minimality of $Y$, which we derive now. The function $F$ induces a coordinate chart on $Y$;
let the indices $1$ and $2$ refer to the $s$ and $x$ coordinates, respectively. Letting $\olT = \gamma'(s)$, then
\[
F_s = (1-\kappa u)\olT + u_s \olN, \qquad F_x = u_x \olN + \del_x,
\]
where $\kappa$ is the curvature of $\gamma$. For convenience below, write $w = 1 - \kappa u$. The
inward pointing $\olg$ unit normal is equal to
\begin{equation}
\olnu = \frac{F_x \times F_s}{|F_x\times F_s|} = J^{-1}(-u_s \olT + w \olN - u_x w \del_x),
\qquad J := \sqrt{u_s^2 + w^2(1+u_x^2)}.
\label{eq:defnu}
\end{equation}
The coefficients of the first fundamental form and its inverse are
\[
(\olg_{ij}) =
\begin{bmatrix}
  w^2 + u_s^2 & u_x u_s \\ u_x u_s & 1+u_x^2
\end{bmatrix},
\qquad \mbox{and}\qquad (\olg^{ij}) = \frac{1}{J^2}
\begin{bmatrix}
  1+u_x^2 & -u_x u_s \\ -u_x u_s & w^2 + u_s^2
\end{bmatrix}.
\]
Next, we compute that
\[
F_{ss} = (w_s - \kappa u_s) \olT + (u_{ss} + \kappa w) \olN,\quad
F_{sx} = -\kappa u_x \olT + u_{xs}\olN, \quad F_{xx} = u_{xx} \olN,
\]

\[
(\olk_{ij}) = -\frac{1}{J}
\begin{bmatrix}
 w(u_{ss} + \kappa w) - u_s (w_s - \kappa u_s) & w u_{xs} + \kappa u_x u_s \\
w u_{xs} + \kappa u_x u_s & w u_{xx}
\end{bmatrix}.
\]
Finally, use the general formula $k_{ij} = e^{\phi}(\olk_{ij} +\del_{\olnu} \phi \, \olg_{ij})$ relating the second
fundamental forms of $Y$ of two conformally related metrics $g = e^{2\phi}\olg$. Here $\phi = -\log x$ and
$\olnu$ is as in (\ref{eq:defnu}), so the matrix $(k_{ij})$ is equal to
\[
-\frac{1}{Jx}
\begin{bmatrix}
  w(u_{ss} + \kappa w) - u_s (w_s - \kappa u_s) - x^{-1}u_x w(w^2 + u_s^2) &
w u_{xs} + \kappa u_x u_s - x^{-1}u_x^2 u_s w \\
w u_{xs} + \kappa u_x u_s - x^{-1}u_x^2 u_s w  & w u_{xx} - x^{-1} u_x w (1+u_x^2)
\end{bmatrix}.
\]
The equation of minimality, i.e.\ that $g^{ij}k_{ij} = H = 0$, is then given by the expression
\begin{equation}
\begin{array}{rcl}
\calF(u) & := &  (1+u_x^2)\left[ w(u_{ss} + \kappa w) - u_s (w_s - \kappa u_s)\right]
- 2 u_x u_s\left( w u_{xs} + \kappa u_x u_s\right) \\
& + & w (w^2 + u_s^2)u_{xx} -2 \frac{w u_x}{x}\left( u_s^2 + w^2 (1 + u_x^2)\right) = 0.
\end{array}
\label{eq:mse}
\end{equation}
The coefficient $1/x$ in this last term makes this a degenerate elliptic equation.

Assume that $\gamma$ is at least $\calC^3$; we compute the first few coefficients in the expansion
of $u(s,x)$ as $x \searrow 0$. Set $u \sim u_2(s) x^2 + u_3(s) x^3 + \ldots$ (since we already know that
$u$ vanishes to second order). Inserting this into $\calF(u) = 0$ yields that $u_2(s) = \frac12 \kappa(s)$,
but $u_3(s)$ is formally undetermined by the equation. In other words, this coefficient must depend
globally on $Y$. Just as in the Fefferman-Graham expansion for PE metrics, all higher terms in the expansion
for $u$ are determined by $\gamma$ and $u_3$ and their derivatives, so we regard $(\gamma, u_3)$ as the Cauchy
data for the minimal surface $Y$. Using the unique continuation theorem from \cite{Ma-uc}, it is straightforward
to show that if $Y_1$ and $Y_2$ are two minimal surfaces with the same Cauchy
 data $(\gamma,u_3)$ (even locally), then $Y_1 \equiv Y_2$.
This global coefficient $u_3$ plays a central role in our work.

As a side remark for the moment, consider $\calC^3$ surfaces with boundary $\overline{Y} \subset \overline{M}$ with
$\del Y \subset \del M$, which intersect $\del M$ orthogonally (this makes sense since $\overline{M}$ has a conformal
structure). Any such $Y$ can still be represented near the boundary as a normal graph over the
vertical cylinder $\Gamma$ over its boundary curve $\gamma$, and the graph function
still vanishes to second order. It is no longer necessarily true that $u_2 = \frac12 \kappa$.
The second fundamental form now satisfies
\[
(k_{ij}) = \begin{bmatrix}
\frac{1}{x}(2u_2-\kappa) + 3u_3+\calO(x) & -2u_2' + \calO(x^2) \\ -2u_2' + \calO(x^2) & -3u_3 + \calO(x)
\end{bmatrix};
\]
note that we now have only $|k|_g = \calO(x)$ unless $2u_2 =
\kappa$ in which case $|k|_g = \calO(x^2)$.

Finally, since the Jacobian term $J = |F_s \times F_x| = 1 + \calO(x^2)$, we see that in these
coordinates, the area form equals
\[
dA = \frac{1 + \calO(x^2)}{x^2}\, dsdx.
\]
Writing $Y_\e = Y \cap \{x \geq \e\}$, then by definition, the renormalized area of $Y$ is
the constant term in the expansion
\begin{equation}
\int_{Y_\e} dA = \frac{\mbox{length}(\gamma_\e)}{\e} + \calA(Y) +
\calO(\e). \label{eq:defrenarea}
\end{equation}
In order for this to be interesting, we must show that $\calA(Y)$ is well-defined, independently of the
choice of special bdf $x$. This was done by Graham and Witten \cite{GW}; their key observation, which
is particularly simple in this low-dimensional setting, is that if $h_0$ and $\widehat{h}_0 = e^{2\chi_0}h_0$ are
two representatives of the conformal class $\frakc(g)$, corresponding to special bdf's $x$ and $\widehat{x}$,
respectively, then $\widehat{x} = e^{\chi}x$, where $\chi(x,y) = \calO(x^2)$. This means that $\widehat{x} = x + \calO(x^2)$,
and hence in the new coordinate system $(\widehat{s},\widehat{x})$ on $Y$, one still has $dA =
\widehat{x}^{\,-2}(1 + \calO(\widehat{x}^2))\, d\widehat{s}d\widehat{x}$.  From this, the claim about
well-definedness of $\calA(Y)$ is immediate.

The only property about $Y$ needed for this argument to work is
that it is at least $\calC^2$ and meets $\del M$ orthogonally.
Thus even in this broader setting there is still a well-defined
notion of renormalized area of $Y$. To maintain the distinction,
we shall denote this extended renormalized area functional by
$\calR$ rather than $\calA$ when the surface $Y$ is not minimal.

\section{A formula for renormalized area}
We now express the renormalized area of a properly embedded minimal surface $Y$ in $M$ in terms of its Euler
characteristic and an integral of local invariants. In fact, since it is not much more complicated to do so,
we find an expression for the renormalized area when $Y$ lies in an arbitrary Poincar\'e-Einstein space of any
dimension and is not necessarily minimal, but still meets $\del M$ orthogonally.

\begin{proposition}
\label{malaki} Let $(M^{n+1},g)$ be a PE space and $\gamma \subset \del M$ a $\calC^{3,\alpha}$ embedded curve, and
suppose that $Y^2 \subset M$ is a properly embedded minimal surface with asymptotic boundary $\gamma$, an embedded
closed curve in $\del M$. Then the renormalized area $\calA$ of $Y$ is equal to
\begin{equation}
\label{formula} \calA(Y) = -2\pi \chi(Y) - \frac{1}{2}\int_Y|\whk|^2 \, dA + \int_Y W_{1212}\, dA,
\end{equation}
where $\whk$ is the trace-free second fundamental form of $Y$ and $W_{1212}$ is the Weyl curvature of $g$ evaluated
on any orthonormal basis for $TY$.  In particular, the integrals on the right are  convergent. If $Y$ is any
properly embedded surface which extends to be a $\calC^2$ surface with boundary in $\overline{M}$ intersecting
the boundary orthogonally, then (with the convention that $H = (\tr{k})/2$), the renormalized area is equal to
\[
\calA(Y) = -2\pi \chi(Y) + \frac{1}{2}\int_Y \left(2|H|^2- |\whk|^2\right) \, dA + \int_Y W_{1212}\, dA.
\]
\end{proposition}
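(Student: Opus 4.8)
The plan is to reduce everything to the Gauss--Bonnet theorem applied to the truncations $Y_\e = Y \cap \{x \geq \e\}$, followed by an analysis of the $\e \searrow 0$ asymptotics. I would begin with the pointwise Gauss equation for $Y^2 \subset M^{n+1}$. Writing $K$ for the intrinsic Gauss curvature, $\overline{K}_{1212}$ for the $g$-sectional curvature of the tangent plane $T_pY$ in a $g$-orthonormal basis $e_1,e_2$, and decomposing the (possibly vector-valued) second fundamental form as $k = \whk + H g$, a short computation gives
\[
K = \overline{K}_{1212} + |H|^2 - \tfrac12 |\whk|^2 ,
\]
valid in any codimension. Since $g$ is Einstein with $\Ric(g) = -n\,g$, the Schouten tensor equals $-\tfrac12 g$ and the curvature tensor splits as $R_{ijkl} = W_{ijkl} - (g_{ik}g_{jl} - g_{il}g_{jk})$; hence $\overline{K}_{1212} = W_{1212} - 1$ and
\[
1 = -K + W_{1212} + |H|^2 - \tfrac12 |\whk|^2 .
\]

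Integrating this identity over $Y_\e$ and applying Gauss--Bonnet in the form $\int_{Y_\e} K\, dA = 2\pi\chi(Y) - \int_{\gamma_\e}\kappa_g\, d\ell$ (using $\chi(Y_\e) = \chi(Y)$ and $\gamma_\e = Y \cap \{x = \e\}$) converts the left-hand side into $\int_{Y_\e} dA$ and yields
\[
\int_{Y_\e} dA = -2\pi\chi(Y) + \int_{\gamma_\e}\kappa_g\, d\ell + \int_{Y_\e}\bigl( W_{1212} + |H|^2 - \tfrac12|\whk|^2 \bigr)\, dA .
\]
For the bulk term I would use that $W_{1212}\,dA$ and $|\whk|^2\,dA$ are conformally invariant, so they agree with the corresponding densities of the smoothly compactified metric $\olg = x^2 g|_{TY}$ and therefore integrate to finite quantities over the compact surface $\overline{Y}$, the tails over $\{0 < x < \e\}$ being $\calO(\e)$. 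In the non-minimal case one also needs $|H|^2\,dA$ to converge: the expansion of $(k_{ij})$ recorded before the proposition gives $H = \calO(x)$ for a $\calC^2$ surface meeting $\del M$ orthogonally, so $|H|^2\,dA = \calO(1)\, ds\, dx$ is integrable with $\calO(\e)$ tail.

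The crux, and the step I expect to be the main obstacle, is showing that the boundary term contributes only the divergent piece $\mathrm{length}(\gamma_\e)/\e$ with no leftover constant. I would compare $\kappa_g$ with the geodesic curvature $\kappa_\olg$ of $\gamma_\e$ in the compactified metric via the two-dimensional conformal law: for $g = e^{2\phi}\olg$ with $\phi = -\log x$ and $\olnu$ the inward $\olg$-unit normal to $\gamma_\e$ in $Y_\e$,
\[
\kappa_g\, d\ell_g = \bigl( \kappa_\olg - \del_\olnu\phi \bigr)\, d\ell_\olg = \Bigl( \kappa_\olg + \tfrac1x\, \del_\olnu x \Bigr)\, d\ell_\olg .
\]
The decisive input is the second-order vanishing $u = \calO(x^2)$ of the graph function established in \S\ref{vms}: this forces $\del_\olnu x = |\nabla^Y x|_\olg = 1 + \calO(x^2)$ with no linear term, and it makes the level curves asymptotically $\olg$-geodesic, $\kappa_\olg = \calO(x)$, since in the coordinates $(s,x)$ one has $\olg = \mathrm{Eucl} + \calO(x^2)$ and the relevant Christoffel symbol $\Gamma^x_{ss}$ is $\calO(x)$. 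Setting $x = \e$ then gives $\int_{\gamma_\e}\kappa_g\, d\ell = \tfrac1\e \int_{\gamma_\e} d\ell_\olg + \calO(\e) = \mathrm{length}(\gamma_\e)/\e + \calO(\e)$, which exactly matches the divergent term in the definition (\ref{eq:defrenarea}). Reading off the constant term then produces $\calA(Y) = -2\pi\chi(Y) + \tfrac12\int_Y(2|H|^2 - |\whk|^2)\,dA + \int_Y W_{1212}\,dA$, and the minimal case (\ref{formula}) follows by setting $H = 0$. Verifying the absence of the linear term in $|\nabla^Y x|_\olg$ and the $\calO(x)$ decay of $\kappa_\olg$ --- both consequences of the orthogonal intersection --- is the delicate part; the rest is bookkeeping.
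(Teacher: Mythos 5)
Your proposal is correct and follows essentially the same route as the paper's proof: the Gauss equation combined with the Einstein decomposition of the curvature tensor to get the pointwise identity, Gauss--Bonnet on the truncations $Y_\e$, conformal invariance of $|\whk|^2\,dA$ and $W_{1212}\,dA$ (plus $H = \calO(x)$) for convergence of the bulk terms, and the boundary asymptotics (\ref{geodcurv}) established via the conformal transformation law for geodesic curvature together with the second-order vanishing $u = \calO(x^2)$ forced by orthogonal intersection. You correctly identified (\ref{geodcurv}) as the crux, and your verification of it (no linear term in $\del_{\olnu}x$, and $\kappa_{\olg} = \calO(x)$ via the $\calO(x^2)$ flatness of $\olg$ in the $(s,x)$ chart) is the same argument the paper gives, phrased through Christoffel symbols rather than through $\olg(F_{ss},\del_x)=0$ at $x=0$.
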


\begin{proof}
We begin with some preliminary observations and calculations.

First, denote by $R_{ijk\ell}$ and $(R_Y)_{ijk\ell}$ the
components of the curvature tensor of $g$ and of the induced
metric on $Y$, respectively. The Ricci curvature of $g$ satisfies
$R_{ij} = -n g_{ij}$, and from the standard decomposition of the
curvature tensor of an Einstein metric, the components of the Weyl
tensor for $g$ are given by
\begin{equation}
W_{ijk\ell} = R_{ijk\ell} +g_{ik}g_{j\ell}-g_{i\ell}g_{jk}.
\label{rasm}
\end{equation}
Fix a point $p \in Y$ and choose an oriented orthonormal basis
$\{e^1, \ldots, e^{n+1}\}$ for $T_pM$ such that $e^1$ and $e^2$
are an oriented basis for $T_pY$.  Now, denoting by $k_{ij}^s$,
$i,j = 1,2$, $s=3, \ldots, n+1$, the components of the second
fundamental form of $Y$ at $p$, the Gauss-Codazzi equations become
\[
\label{yela} R_{1212}=(R_{Y})_{1212} - \sum_{s=3}^{n+1}(k_{11}^s
k^s_{22}-k^s_{12}k^s_{12}) = (R_{Y})_{1212} - |H|^2+
\frac{1}{2}|\whk|^2.
\]
To check this last equality, simply note that for each $s$,
$k^s_{11}+k^s_{22}= 2H^s$, so $k^s_{ii} = \whk^s_{ii} + H^s$ and
$k^s_{ij} = \whk^s_{ij}$ for $i \neq j$, and hence
\[
\begin{array}{rcl}
\sum_{s=3}^{n+1}\left( k^s_{11}k^s_{22}-k^s_{12}k^s_{12}\right) &=
& \sum_{s=3}^{n+1} \left((\whk^s_{11} + H^s)(\whk^s_{22} + H^s)
-(\whk^s_{12})^2\right) \\ & = & \sum_{s=3}^{n+1} (H^s)^2 -
\frac{1}{2}((\whk_{11}^s)^2 + (\whk^s_{22})^2 + 2
(\whk^s_{12})^2).
\end{array}
\]

Combined with (\ref{rasm}), this gives
\begin{equation}
\label{manage} (R_Y)_{1212} + \frac{1}{2}|\hat{k}|^2 -|H|^2 -
W_{1212} = -1.
\end{equation}
This equation holds at each point. The first term on the left is
simply the Gauss curvature $K$ of $Y$; for simplicity, we continue
to write $W_{1212}$ for the third term on the left, noting that it
is independent of orthonormal frame.

Now integrate over $Y_\e = Y \cap \{x \geq \e\}$ to obtain
\[
\int_{Y_\e}K\, dA   - \frac{1}{2}\int_{Y_\e}\left(
2|H|^2 - |\hat{k}|^2\right)\, dA - \int_{Y_\e} W_{1212}\, dA =
-\int_{Y_\e} dA.
\]
By the Gauss-Bonnet theorem, since $\chi(Y_\e) = \chi(Y)$ for $\e$ small enough,
\[
\int_{Y_\e} K\, dA = 2\pi \chi(Y)-\int_{\gamma_\e}\kappa_\e \, ds,
\]
where $\kappa_\e$ is the geodesic curvature of the boundary $\gamma_\e := \partial Y_\e$ in
$Y_\e$ and $ds$ is the length element with respect to the metric induced by $g$. Altogether we get
\[
\int_{Y_\e} dA = -2\pi \chi(Y)+
\int_{\gamma_\e}\kappa_\e \, ds +\frac{1}{2}\int_{Y_\e}\left(2|H|^2 - |\whk|^2\right) \, dA +
\int_{Y_\e} W_{1212}\, dA.
\]

To proceed further, we use the formula that as $\e \searrow 0$,
\begin{equation}
\label{geodcurv} \int_{\gamma_\e}\kappa_\e\, ds= \frac{\mbox{length}\, (\gamma_\e)}{\e}+ \calO(\e).
\end{equation}
Deferring the proof of this for a moment, using the basic definition of renormalized area via Hadamard regularization
in (\ref{eq:defrenarea}), we find that
\begin{equation}
\label{qasir3} \calA(Y) = -2\pi \chi(Y) + \lim_{\e \to 0} \left(\frac{1}{2}\int_{Y_\e}
\left( 2|H|^2 - |\whk|^2\right)\, dA + \int_{Y_\e} W_{1212}\, dA\right).
\end{equation}
In order to show that the second and third terms on the right have limits as $\e \searrow 0$, recall the
transformation law
\[
\whk_{ij}(e^{2\phi}g)= e^{\phi}\whk_{ij}(g),
\]
for the trace-free second fundamental form $\whk(g)$ under the conformal
change of ambient metric from $g$ to $e^{2\phi}g$ (this is true no matter the dimension or
codimension of the submanifold $Y$). When $\dim Y = 2$,
\[
|\whk(e^{2\phi}g)|^2_{e^{2\phi} g} \, dA_{e^{2\phi}g}  = |\whk(g)|^2_g\, dA_g.
\]
Similarly, the components of the Weyl tensor transform as
\[
W_{ijk\ell}(e^{2\phi}g) = e^{-2\phi}W_{ijk\ell}(g) \Longrightarrow
W_{1212}(e^{2\phi}g) \, dA_{e^{2\phi}g} = W_{1212}(g)\, dA_g.
\]
Thus these two potentially worrisome terms do have a limit. Similarly, even when $Y$ is not minimal, by the
calculations in \S 2, the mean curvature $H$ is $\calO(x)$, so its integral has a limit too.

It remains to prove (\ref{geodcurv}). Denote by $\overline{\kappa}_\e$ the geodesic curvature of $\gamma_\epsilon$
with respect to the metric $\olg = x^2g$ and $\overline{n}$ the interior $\olg$-unit normal to $\del Y_\e$ in $Y_\e$.
Since $u \sim u_2 x^2+u_3 x^3+\calO(x^{3+\alpha})$, it follows that $\overline{n}=
(1+\calO(x^2))\del_x+V$, where $\olg(V,\del_x) = 0$. Now, geodesic curvature also transforms nicely under conformal
re-scalings: $\kappa_\e=\e (\overline{\kappa}_\e+\del_{\overline{n}}\, \log x)$. Since $\olg(F_{ss},\del_x) = 0$
at $x=0$, we deduce $\overline{\kappa}_\e=\calO(\e)$, hence $\kappa_\e=1+\calO(\e^2)$; recalling too
that $ds = \e^{-1}d\overline{s}$, we obtain finally
\[
\int_{\gamma_\e} \kappa_\e\, ds = \frac{\mbox{length}\,(\gamma_\e)}{\e} + \calO(\e),
\]
as claimed.
\end{proof}

The expression $\frac12 \int_{Y_\e} (2H^2 -  |\whk|^2)\, dA$ is finite only when $Y$ intersects $\del M$ orthogonally.
Indeed, using the notation and formul\ae\ from \S 2.3 again, suppose
that $Y$ is written as a normal graph over the vertical cylinder $\Gamma$ over the boundary curve $\gamma$, but
do not assume that $u_x(s,0) \equiv 0$. Now $H= 2 u_x(s,0)+\calO(x)$, as follows from the formul\ae\
$H =x \big( \overline{H}+ 2\del_{\olnu}(\log x)\big)$ and $\del_{\olnu}x = \olg(\olnu, \del_x) = u_x(s,0) + \calO(x)$.
Recalling again that $|\whk|^2\, dA$ is conformally invariant, we see that
\[
\frac12 \int_{Y_\e} (2H^2 -  |\whk|^2)\, dA = \frac{4|u_x(s,0)|^2}{\e} + \calO(\log \e)
\]
does not have a limit as $\e \searrow 0$ unless $u_x(s,0) \equiv 0$.  This is consistent with the fact that
the definition of renormalized area  $\calA(Y)$ via Hadamard regularization is independent of choice of special
bdf $x$ only when this same condition is satisfied.

\section{The moduli spaces $\calM_k(M)$}
Fix the convex cocompact hyperbolic $3$-manifold $(M,g)$ and an integer $k \geq 0$. We define
$\widetilde{\calM}_k(M)$ to be the space of all properly embedded surfaces of genus $k$
which extend to $\overline{M}$ as $\calC^{3,\alpha}$ submanifolds with boundary and which intersect
$\del M$ orthogonally, and $\calM_k(M)$ the subspace of all such surfaces which are minimal.
In this section we study the structure of these moduli spaces, which are the natural domains for the
renormalized area functional, as well as some properties of the natural map $\Pi$ which assigns to any such $Y$
its asymptotic boundary $\del Y = \gamma$, which is a $\calC^{3,\alpha}$ closed (but possibly disconnected)
embedded curve in $\del M$.

It is a standard fact that the space of all $\calC^{3,\alpha}$ surfaces $Y\subset M$ with boundary $\gamma$
lying in $\del M$ is a Banach manifold. The space $\widetilde{\calM}_k(M)$ defined above is clearly a closed
submanifold (of infinite codimension). Similarly, the space $\calE$ of all $\calC^{3,\alpha}$ closed embedded (but
not necessarily connected) curves $\gamma \subset \del M$ is also a Banach manifold.
The corresponding structure for the smaller space of minimal surfaces is also true.
\begin{proposition}
For each $k$, $\calM_k(M)$ is a Banach manifold.
\end{proposition}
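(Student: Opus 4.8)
The plan is to exhibit $\calM_k(M)$ as the zero set of the mean curvature map on the ambient Banach manifold $\widetilde{\calM}_k(M)$ and to apply the implicit function theorem in Banach spaces. Fix $Y_0 \in \calM_k(M)$ with boundary curve $\gamma_0$. First I would construct a chart for $\widetilde{\calM}_k(M)$ near $Y_0$: using the normal exponential map of $g$, every nearby surface is the graph $Y_\phi = \{\exp_p(\phi(p)\nu(p)) : p \in Y_0\}$ for a scalar displacement $\phi$ ranging over an appropriate weighted H\"older space $\calX$. The weight and regularity of $\calX$ must encode two things: that $Y_\phi$ is $\calC^{3,\alpha}$ up to $\gamma_0$ and meets $\del M$ orthogonally, and --- crucially --- that the asymptotic boundary is free to move, so that the leading boundary coefficient of $\phi$ (the term at the lower indicial root, in the normalization of Theorem 1.2 the coefficient $\dot{\phi}_0$) is an unconstrained parameter.

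On this chart define the smooth map $\calN(\phi) := H_{Y_\phi} \in \calZ$, where $\calZ$ is the target weighted H\"older space of admissible mean curvatures; recall from \S 2.3 that $H = \calO(x)$ for any surface meeting $\del M$ orthogonally, which fixes the weight of $\calZ$. Smoothness of $\calN$ follows because the minimal surface operator $\calF$ of (\ref{eq:mse}) depends quasilinearly and analytically on the graph data. By construction $\calM_k(M)$ coincides near $Y_0$ with $\calN^{-1}(0)$, so it suffices to prove that $D\calN|_0$ is surjective with complemented kernel.

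The linearization is the Jacobi operator $L_{Y_0} = \Delta_{Y_0} + |\whk|^2 + \Ric(\nu,\nu) = \Delta_{Y_0} + |\whk|^2 - 2$, a uniformly degenerate elliptic operator of order two. Since the induced metric on $Y_0$ is asymptotically hyperbolic along $\gamma_0$ and $|\whk|^2 \to 0$ there, the indicial operator of $L_{Y_0}$ is that of $\Delta_{\HH^2} - 2$; its indicial roots are $\mu_1 = -1$ and $\mu_2 = 2$ (symmetric about $1/2$, as they must be since $L_{Y_0}$ is formally self-adjoint with respect to $dA_g$), corresponding to the roots $0$ and $3$ in the normalization of Theorem 1.2. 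At an intermediate weight Proposition \ref{pr:eeo} shows $L_{Y_0}$ is Fredholm of index $0$; when $Y_0$ is nondegenerate it is already an isomorphism and we are done. The substantive case is degenerate $Y_0$, where the index-$0$ map has a finite-dimensional cokernel equal, by self-adjointness, to the space of Jacobi fields decaying at the fast rate $x^{\mu_2}$. Here I would use the free boundary: enlarging the domain by the leading boundary data at $\mu_1$ adjoins exactly enough directions to surject onto this cokernel, because Green's formula produces a nondegenerate boundary pairing between the $x^{\mu_1}$ leading data and the $x^{\mu_2}$-decaying adjoint Jacobi fields. Thus $D\calN|_0$ is surjective, and its kernel is complemented by the bounded generalized inverse furnished by the uniformly degenerate calculus of \cite{Ma-edge}.

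With $D\calN|_0$ surjective and its kernel split, the implicit function theorem yields that $\calN^{-1}(0)$ is a smooth Banach submanifold of $\widetilde{\calM}_k(M)$, modeled on $\ker D\calN|_0$; the open conditions of embeddedness, embedded asymptotic boundary, and fixed genus $k$ merely restrict to an open subset and do not disturb this structure. The main obstacle is precisely the surjectivity of $L_{Y_0}$ at degenerate surfaces: one must set up the free-boundary weighted spaces correctly and verify the nondegeneracy of the boundary pairing, so that the implicit function theorem still applies exactly where the fixed-boundary Jacobi operator fails to be onto. This same mapping analysis --- identifying $\ker D\calN|_0$ and recognizing that the degeneracy of $Y_0$ passes into the boundary map rather than into $\calN$ --- is what later underlies the Fredholm, index-$0$ property of $\Pi$ in Theorem 1.1.
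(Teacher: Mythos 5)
Your proposal is correct and follows essentially the same route as the paper: normal-graph charts, the Jacobi operator $L_Y = \Delta_Y + |A_Y|^2 - 2$ viewed as a uniformly degenerate elliptic operator with indicial roots $-1$ and $2$, Fredholmness of index $0$ from Proposition \ref{pr:eeo}, and, at a degenerate $Y$, surjectivity recovered from the Green's-formula boundary pairing $\int_\gamma \hat{\psi}\, u_0\, ds$ between the growing boundary data and the decaying Jacobi fields, followed by the implicit function theorem. The only cosmetic difference is that the paper implements your ``unconstrained leading coefficient'' via an explicit extension operator $\calE$ producing approximately minimal surfaces $Y_{\psi,0}$ rather than folding it into a single weighted domain space; the mechanism is identical.
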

\begin{proof}
Fix any $Y \in \calM_k(M)$ and assume for the moment that $\del Y = \gamma$ is actually a $\calC^\infty$ embedded
curve in $\del M$. We construct a coordinate neighbourhood around $Y$ in $\calM_k$ which in the generic (nondegenerate)
setting is modelled on a small ball around $0$ in the space of Jacobi fields for the minimal surface operator on
$Y$ which are $\calC^{3,\alpha}$ up to $\del Y$; this ball in turn is identified with a small ball in the space of
$\calC^{3,\alpha}$ normal vector fields along $\gamma$. We make this nondegeneracy condition explicit below.

To set this up, let $\nu$ be the unit normal (with some fixed choice of orientation) along $Y$. If $\phi$ is
any scalar function on $Y$ which is small in $\calC^{3,\alpha}$, we can define a new surface
\[
Y_{0,\phi} = \{ \exp_{p}(\phi(p)\nu(p)): p \in Y\},
\]
which we call a normal graph over $Y$.

The mean curvature of $Y_{0,\phi}$ is computed by a nonlinear elliptic second order operator $\calF(\phi)$. The precise
expression of this operator is rather complicated, but its linearization has the familiar form
\[
\left. D\calF\right|_{\phi=0} := L_Y = \Delta_{Y} + |A_{Y}|^2 - 2;
\]
here $A_{Y}$ is the second fundamental form of $Y$ and $\Delta_{Y}$ is its Laplacian with respect to the induced metric.

This Jacobi operator, $L_Y$, is an elliptic uniformly degenerate operator of order $2$. Its normal operator is
\[
N(L_Y) = t^2 \del_t^2 + t^2 \del_v^2 - 2
\]
since the second fundamental form $A_Y$ vanishes at $\del Y$; the leading (second order) term is just the Laplacian
on the hyperbolic plane, so $N(L_Y) = \Delta_{\HH^2} - 2$. The indicial roots are $\mu_1 = -1$, $\mu_2 = 2$,
hence solutions of $L_Y u = 0$ satisfy $u \sim a(y)x^{-1}+ \ldots $ or $u \sim a(y)x^2 + \ldots$.
Note that since the $g$- and $\overline{g}$-unit normals are related by $\nu = x \olnu$, in the case
where $u$ blows up as $x \searrow 0$, the product $u \nu = (xu)\olnu$ behaves like $(a(y) + \calO(x))\olnu$,
or in other words, the solutions growing at this rate are the ones which are bounded (but not blowing up)
at $x=0$ with respect to $\overline{g}$, and hence correspond to moving the boundary curve $\gamma$
nontrivially. In this $\overline{g}$ normalization, the decaying Jacobi fields vanish like $x^3$,
which should be no surprise.   In any case, it follows directly from
self-adjointness and integration by parts that
\[
N(L_Y): t^\mu H_0^2(\HH^2; t^{-2} dtdv) = t^{1+\mu} \,
H_0^2(\HH^2; dtdu) \longrightarrow t^{1+\mu}\, L^2(\HH^2; dtdu)
\]
is invertible when $\mu = 0$. By Proposition \ref{pr:eeo}, this is true for any $-1 < \mu < 2$ and for any
$\mu$ in this range,
\begin{equation}
L_Y: x^\mu \Lambda^{2,\alpha}_0 \longrightarrow x^\mu \Lambda^{0,\alpha}_0
\label{eq:Jacws}
\end{equation}
is Fredholm of index zero. We call the minimal surface $Y$ {\it nondegenerate} if the nullspace $K_\mu$
of this mapping contains only $0$ for any $\mu \in (-1,2)$; in this case, (\ref{eq:Jacws}) is surjective.
In general, its cokernel is canonically identified with $K_\mu$ in the following sense. First note that by Proposition
\ref{pr:eeo} again, $K_\mu \subset x^2\Lambda^{2,\alpha}_0(Y)$ (indeed, if $\gamma$ is smooth, any $u \in K_\mu$ is
polyhomogeneous, i.e.\ has full tangential regularity),
so we may as well drop the subscript $\mu$. Next, if $f \in x^\mu \Lambda^{0,\alpha}_0(Y)$ lies in the range of
(\ref{eq:Jacws}), $f = L_Y w$, then obviously $\int_Y u f \, dA_Y = \int_Y u L_Y w \, dA_Y = 0$ for all $u \in K$.
(Note that this integral makes sense since $\mu > -1$.)  However, this gives precisely the correct number
of linear conditions, so this necessary condition is also sufficient.

To study $\calM_k(M)$, we must consider a broader class of deformations of $Y$ where the boundary curve $\gamma$ also varies.
Let $\olnu = x^{-1}\nu$ be the unit normal to $Y$ with respect to the conformally compactified metric $\olg = x^2 g$.
This vector field extends smoothly to $\overline{Y}$, and its restriction to $\gamma = \del Y$ is the unit normal
$\olN$ to this curve in $\del M$ with respect to $h_0$. Any nearby curve can be written as a normal graph
\[
\gamma_\psi = \{\exp_p(\psi(p) \olN(p)) : p \in \gamma\}
\]
(where now $\exp$ is with respect to $h_0$). We now define an extension operator $\calE$ which assigns to
any small $\psi$ a surface $Y_{\psi,0}$ which is `approximately minimal' and which has $\del Y_{\psi,0} = \gamma_\psi$.
To do this, let $u$ be the graph function for $Y$ over the cylinder $\Gamma$. We define a new graph function $u_\psi$
in some neighbourhood $\{x < \e\}$ of the boundary such that $u_\psi(s,0) = \psi(s)$, and $\del_x^j u_\psi(s,0)$, $j = 1,2$
is determined by the formal expansion of solutions for $\calF$; $\del_x^3 u_\psi(s,0)$ could be chosen freely,
but we set it equal to $u_3(s)$. Now let $U_\psi = \chi u_\psi + (1-\chi)u$ where
$\chi $ is a cutoff function which equals $1$ near $x=0$. It is not hard to check that $\calF(U_\psi)
\in x^\mu \Lambda^{1,\alpha}$ for some $0 < \mu < 2$. The extension $\calE$ can be chosen to depend smoothly
on $\psi$. We then have that
\[
\left.D\calE\right|_{0}(\hat{\psi}) = w
\]
is a function on $Y$ which satisfies $w \sim x^{-1}\hat{\psi}$ as $x \searrow 0$ and $L_Y w = \calO(x^\mu)$
for some $\mu \in (0,2)$.

Finally, perturb $Y_{\psi,0}$ to a normal graph over it using the unit normal for $Y_{\psi,0}$ and as graph function
any small $\phi \in x^\mu \Lambda^{2,\alpha}_0(Y)$. The resulting surface will be denoted $Y_{\psi,\phi}$, and we
write its mean curvature as $\calF(\psi,\phi)$. Thus if
${\mathcal B}$ is a small neighbourhood of the origin in $\calC^{3,\alpha}(\gamma) \times x^\mu \Lambda^{3,\alpha}_0(Y)$, then
\begin{equation}
\calF: {\mathcal B} \longrightarrow x^\mu \Lambda^{1,\alpha}_0(Y)
\label{eq:nlpm}
\end{equation}
is a smooth mapping.

A neighbourhood of $Y$ in $\calM_k(M)$ is identified with the space of solutions to $\calF(\psi,\phi) = 0$, and so may
be studied by the implicit function theorem. Note that
\[
\left. D\calF \right|_{(0,0)} (\hat{\psi},\hat{\phi}) = L_Y( D\calE(\hat{\psi}) + \phi).
\]
When $Y$ is nondegenerate, $D_2\calF|_{(0,0)} = L_Y$ on $x^\mu \Lambda^{2,\alpha}_0(Y)$ is already surjective;
this yields the existence of a smooth map $\calG$ defined in a neighbourhood of $0$ in $\calC^{3,\alpha}(\gamma)$ to
$x^\mu \Lambda^{2,\alpha}_0(Y)$ such that $\calF(\psi,\calG(\psi)) \equiv 0$, and so that all elements of
the nullspace of $\calF$ near $(0,0)$ are of this form.

In the degenerate case, we must show that by allowing $\hat{\psi}$ to vary over some suitable finite dimensional 
subspace of infinitesimal deformations of $\gamma$,
we can still obtain a surjective map. If this were to fail, then there would exist a nontrivial $u \in K$ such that
for all $\hat{\psi}$ and $\hat{\phi}$, $L_Y(D\calE(\hat{\psi}) + \hat{\phi})\perp u$. Write $\eta = D\calE(\hat{\psi})$. Then
\[
0 = \int_Y L_Y(\eta + \hat{\phi}) u  = \int_{\gamma} n \cdot \nabla(\eta + \hat{\phi}) u - (\eta + \hat{\phi}) n \cdot \nabla u
= - 2 \int_\gamma \hat{\psi} u_0,
\]
which implies that $u_0$ (the leading coefficient of $x^2$ in the expansion of $u$) is orthogonal to every $\hat{\psi}$,
which is impossible. This proves that $\calF$ is always surjective as a function of both $(\psi,\phi)$, and hence
finally that $\calM_k(M)$ is a smooth Banach manifold in a neighbourhood of $Y$.
\end{proof}

\begin{proposition}
The natural map
\[
\Pi: \calM_k(M) \longrightarrow \calE(\del M)
\]
given by $\Pi(Y) = \del Y$ is Fredholm with index $0$.
\end{proposition}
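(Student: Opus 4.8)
The plan is to fix $Y \in \calM_k(M)$, work in the local chart for $\calM_k(M)$ constructed in the previous proposition, and show that $D\Pi_Y$ is Fredholm of index $0$ there; smoothness of $\Pi$ is clear from the chart construction. Recall that a neighbourhood of $Y$ is identified with the zero set of the map $\calF$ of (\ref{eq:nlpm}), so $T_Y\calM_k(M) = \ker \left.D\calF\right|_{(0,0)}$, and that in these charts $\Pi$ is simply the projection $(\psi,\phi) \mapsto \gamma_\psi$ onto the boundary curve, the interior deformation $\phi \in x^\mu \Lambda^{2,\alpha}_0(Y)$ leaving $\del Y$ fixed since $\mu > 0$. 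Writing $P := \left.D\calE\right|_0$ for the extension operator, so that $\left.D\calF\right|_{(0,0)}(\hat\psi,\hat\phi) = L_Y(P\hat\psi + \hat\phi)$, the differential $D\Pi_Y$ is the restriction to $\ker\left.D\calF\right|_{(0,0)}$ of the projection $(\hat\psi,\hat\phi) \mapsto \hat\psi \in \calC^{3,\alpha}(\gamma)$.

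First I would compute the kernel. An element of $\ker D\Pi_Y$ is a pair $(0,\hat\phi)$ with $\hat\phi \in x^\mu \Lambda^{2,\alpha}_0(Y)$ and $L_Y \hat\phi = 0$, i.e.\ an element of the nullspace $K$ of (\ref{eq:Jacws}). Thus $\ker D\Pi_Y \cong K$ is finite dimensional of dimension $\dim K$.

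Next I would identify the image. A curve $\hat\psi$ lies in the image exactly when one can solve $L_Y \hat\phi = -L_Y P\hat\psi$ with $\hat\phi \in x^\mu \Lambda^{2,\alpha}_0(Y)$; since $L_Y P\hat\psi \in x^\mu \Lambda^{0,\alpha}_0(Y)$ for some $\mu \in (0,2)$, the Fredholm theory of Proposition \ref{pr:eeo} makes this solvable iff $L_Y P\hat\psi \perp K$, a system of $\dim K$ linear conditions on $\hat\psi$. Integrating by parts exactly as in the previous proof --- using that $u \in K$ vanishes like $x^2$ while $P\hat\psi \sim x^{-1}\hat\psi$ --- turns each condition into a nonzero multiple of the explicit boundary pairing
\[
\int_\gamma \hat\psi \, u_0 \, ds = 0,
\]
where $u_0$ is the leading ($x^2$) coefficient of $u$. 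Hence the image of $D\Pi_Y$ is the subspace of $\calC^{3,\alpha}(\gamma)$ annihilated by the continuous functionals $\hat\psi \mapsto \int_\gamma \hat\psi\, u_0\, ds$, $u \in K$; it is therefore closed of finite codimension, at most $\dim K$. This already shows that $D\Pi_Y$ is Fredholm.

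To obtain index exactly $0$ I must show that these $\dim K$ functionals are linearly independent, equivalently that the linear map $K \to \calC^{1,\alpha}(\gamma)$, $u \mapsto u_0$, is injective; the image then has codimension exactly $\dim K$, giving $\mathrm{ind}\, D\Pi_Y = \dim K - \dim K = 0$. This injectivity is the main obstacle, and it is exactly where unique continuation enters. The coefficient of $x^{-1}$ together with the coefficient $u_0$ of $x^2$ constitute the Cauchy data of the Jacobi field $u$, in direct analogy with the data $(\gamma, u_3)$ for $Y$ itself in \S 2.3. For $u \in K$ the $x^{-1}$ coefficient already vanishes, so if in addition $u_0 \equiv 0$ then $u$ has trivial Cauchy data and hence $u \equiv 0$ by the unique continuation theorem of \cite{Ma-uc}. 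Thus $u \mapsto u_0$ is injective, the functionals are independent, and $D\Pi_Y$ is Fredholm of index $0$ at every $Y$, as claimed.
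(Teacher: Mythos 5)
Your proof is correct and follows essentially the same route as the paper: both work in the chart from the preceding proposition, identify $\ker D\Pi_Y$ with the space $K$ of decaying Jacobi fields, and compute the cokernel through the boundary pairing $\int_\gamma \hat\psi\, u_0\, ds$ obtained by integrating by parts against elements of $K$. The differences are only presentational: where the paper supplements $x^\mu\Lambda^{2,\alpha}_0(Y)$ by an $\ell$-dimensional space $H$ of boundary deformations and invokes the implicit function theorem to see that the range has codimension $\ell$, you characterize the range directly via the solvability condition $L_Y P\hat\psi \perp K$, and you make explicit --- via the unique continuation theorem of \cite{Ma-uc} applied to the Cauchy data at the indicial roots --- the injectivity of $u \mapsto u_0$ that the paper leaves implicit in its ``which is impossible'' step.
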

(Recall that this means that if $Y \in \calM_k(M)$, then $D\Pi_{Y}$ is a Fredholm map from $T_Y \calM_k(M)$ to
$T_{\Pi(Y)}\Gamma$ with the dimensions of its kernel and cokernel equal to one another.)
\begin{proof}
Let $K$ denote the nullspace of $L_Y$ acting on functions of the form $D\calE|_0(\hat{\psi}) + \hat{\phi}$,
$\hat{\psi} \in \calC^{3,\alpha}(\gamma)$, $\hat{\psi} \in x^\mu \Lambda^{2,\alpha}_0(Y)$. If $Y$ is nondegenerate,
then $K$ contains no elements of the form $(0,\hat{\phi})$, so $D\Pi_Y$ is an isomorphism. If $Y$ is
degenerate, however, then the proof above shows that if $\ell = \dim(K \cap x^\mu\Lambda^{2,\alpha}_0(Y))$,
so that $\dim \ker D\Pi_Y = \ell$, then we can make $L_Y$ surjective by supplementing $x^\mu \Lambda^{2,\alpha}_0(Y)$
with an $\ell$-dimensional space $H$ of functions of the form $D\calE_0(\hat{\psi})$ (and we may even assume that
each $\hat{\psi}$ is $\calC^\infty$). Let $H'$ be any choice of complement of $H$ in
$\calC^{3,\alpha}(\gamma)$. The implicit function function theorem shows that there exists a smooth map $\calG$
from $H'$ to $H \oplus x^\mu\Lambda^{3,\alpha}_0(Y)$ such that $\calF(\hat{\psi},\calG(\hat{\psi})) \equiv 0$,
so the codimension of the range of $D\Pi_Y$ is $\ell$ too. Hence $\mbox{ind}\,(D\Pi_Y) = 0$, as claimed.
\end{proof}

The final general result about these moduli spaces is contained in the
\begin{proposition}
$\Pi$ is a proper mapping.
\label{pr:proper}
\end{proposition}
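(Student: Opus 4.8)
The plan is to prove properness in its sequential form: if $Y_i \in \calM_k(M)$ is a sequence whose boundary curves $\gamma_i = \Pi(Y_i)$ converge to some $\gamma_\infty$ in $\calC^{3,\alpha}$, then after passing to a subsequence the $Y_i$ converge in $\calM_k(M)$ to a minimal surface $Y_\infty$ with $\del Y_\infty = \gamma_\infty$. For $i$ large the number of boundary components, and hence $\chi(Y_i)$, is constant, so we may assume it fixed along the sequence.

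The first step is to confine the $Y_i$ to a fixed compact region of $\overline{M}$ and obtain uniform estimates near $\del M$. Near the boundary I use the barrier surfaces $\Gamma_i^{\pm}$ from \S\ref{vms} — the envelopes of the families of totally geodesic hemispheres tangent to $\gamma_i$ — which depend continuously on $\gamma_i$ in $\calC^{3,\alpha}$; since $\gamma_i \to \gamma_\infty$, these barriers are uniformly controlled and trap each $Y_i$ in a fixed collar as a vertical graph $u_i$ over the cylinder $\Gamma_i$. Away from the boundary, the convex hull property for minimal surfaces (the maximum principle applied to the distance to the convex hull of $\gamma_i$, lifted to $\HH^3$ and descended to the convex core of $M$) confines $Y_i \cap \{x \geq \e_0\}$ to a fixed compact set $\calK \subset M$. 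Since each $u_i$ solves the uniformly degenerate elliptic equation $\calF(u_i) = 0$ of \S\ref{vms} with boundary data converging in $\calC^{3,\alpha}$, the Schauder theory for such operators (from \cite{Ma-edge}, underlying Proposition~\ref{tonegreg}) yields uniform $\calC^{3,\alpha}$ bounds on $u_i$ over a fixed collar $\{x \leq \e_0\}$; hence a subsequence converges in $\calC^{3,\alpha'}$, $\alpha' < \alpha$, to a minimal graph with asymptotic boundary $\gamma_\infty$.

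Next I establish a uniform area bound on the compact pieces $Y_{i,\e_0} = Y_i \cap \{x \geq \e_0\}$, which is what drives the interior compactness. In $\HH^3$ the Weyl tensor vanishes and $H = 0$, so (\ref{manage}) reduces to $K = -1 - \tfrac12|\whk|^2 \leq -1$, where $K$ is the Gauss curvature of $Y_i$. Hence $\mbox{Area}(Y_{i,\e_0}) \leq -\int_{Y_{i,\e_0}} K\, dA$, and by Gauss--Bonnet $-\int_{Y_{i,\e_0}} K\, dA = -2\pi\chi(Y_i) + \int_{\gamma_{i,\e_0}} \kappa_{\e_0}\, ds$. The right-hand side is uniformly bounded, since $\chi(Y_i)$ is fixed and the geodesic curvature integral over the slice $\gamma_{i,\e_0} = Y_i \cap \{x = \e_0\}$ is controlled by the collar estimates of the previous step. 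The same identity gives $\int_{Y_{i,\e_0}} |\whk|^2\, dA = \int_{Y_{i,\e_0}} |A|^2\, dA \leq C$, a uniform total curvature bound. With these uniform area and genus bounds, together with the lower area bounds supplied by the monotonicity formula, the compactness theory for embedded minimal surfaces of bounded area and genus (Choi--Schoen, White~\cite{Wh2}) furnishes a subsequence converging — smoothly and with some multiplicity away from finitely many interior concentration points — to a smooth embedded minimal surface $Y_\infty$ of genus $\leq k$ inside $\calK$; gluing to the collar limit gives $\del Y_\infty = \gamma_\infty$.

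The main obstacle is to show that this limit actually lies in $\calM_k(M)$, i.e.\ that the convergence is smooth everywhere, of multiplicity one, and with no loss of genus. Multiplicity one propagates inward from the boundary: along the collar the $u_i$ converge in $\calC^{3,\alpha'}$ with multiplicity one, and since the relevant component of each $Y_i$ is connected the multiplicity is locally constant and hence equal to one throughout. Granting multiplicity one, the only mechanism for genus to drop is the pinching of a neck at a concentration point; but a pinching neck forces two sheets of the embedded limit to meet tangentially at an interior point, which the strong maximum principle for minimal surfaces forbids unless the sheets coincide — and coincidence would mean multiplicity two, already excluded. The remaining possibility, a genus-carrying bubble splitting off, is constrained by the uniform bound on $\int |A|^2$: each such bubble absorbs a definite quantum of total curvature and area, and one must verify, using that the boundary data and confining region $\calK$ are fixed along the sequence, that no genus escapes into such a bubble. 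Hence no concentration occurs, $Y_\infty$ is a smooth embedded minimal surface of genus exactly $k$ with $\del Y_\infty = \gamma_\infty$, the convergence $Y_i \to Y_\infty$ takes place in the topology of $\calM_k(M)$, and $\Pi$ is proper. I expect this final step — the exclusion of interior curvature concentration and the attendant preservation of genus — to be the genuine difficulty, since it is precisely here that the bounded-genus minimal surface compactness theory must be combined with the rigidity coming from embeddedness and the fixed asymptotic boundary.
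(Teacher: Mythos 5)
Your overall architecture matches the paper's (uniform collar near the boundary, Gauss--Bonnet area bound via $K \leq -1$, then interior compactness), and your area-bound step is essentially identical to the paper's. But there is a genuine gap at the first step, and it sits exactly where the paper locates the real work. You assert that the barriers $\Gamma_i^{\pm}$ trap each $Y_i$ ``in a fixed collar as a vertical graph $u_i$'' and that degenerate Schauder theory then gives uniform $\calC^{3,\alpha}$ bounds. The barriers only confine $Y_i$ to a thin cusp-shaped region around the cylinder and force verticality \emph{at} $x=0$; they do not show that $Y_i$ is a single-valued horizontal graph up to a height $\e_0$ independent of $i$, nor do they bound $\nabla u_i$ there. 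The Schauder appeal is circular: $\calF(u)=0$ is quasilinear, its ellipticity constants depend on $\nabla u$, and the very domain on which $u_i$ exists as a graph is what is in question — in \S 2.3 the graph neighbourhood is produced with a size depending on the individual surface. The paper closes precisely this gap with a blow-up argument: if no uniform gradient bound held, one rescales hyperbolically at points where $|\nabla u_j|$ first reaches $1$ at heights $x_j \searrow 0$ and extracts a complete minimal surface in $\HH^3$ with boundary a straight line, written as a horizontal graph over the full vertical half-plane but with a non-vertical tangent plane at an interior point; this contradicts the uniqueness of the totally geodesic plane as the minimal surface spanning a line. Without this (or an equivalent) argument, your proof does not get off the ground.

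Your final step also remains open by your own admission: you invoke Choi--Schoen-type concentration analysis and then concede that excluding genus-carrying bubbles ``must be verified.'' The paper avoids this entirely. Once the uniform collar is in hand, the ends of the $Y_j$ are uniform graphs, so no handles can slide off to infinity; and for the compact pieces $Y_{j,\e}$, which are minimal surfaces with boundary in the convex set $\{x \geq \e\}$ with uniformly bounded genus and (by the Gauss--Bonnet argument you reproduce) uniformly bounded area and smoothly converging boundary slices, the compactness theorems of Anderson \cite{An3} and White \cite{Wh} directly furnish a convergent subsequence — no separate bubbling or multiplicity analysis is needed at the level of detail the paper works at. So you have misplaced the genuine difficulty: it is not the interior concentration question you flag at the end, but the uniform boundary graph representability you passed over at the start.
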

\begin{proof}
We must show that if $\gamma_j$ is a sequence of elements in $\calE$ such that $\gamma_j \to \gamma$ in
$\calC^{3,\alpha}$, and if $Y_j \in \calM_k(M)$ has $\del Y_j = \gamma_j$, then (possibly after passing to
a subsequence) $Y_j$ converges to a properly embedded minimal surface $Y$ with genus $k$ and
$\del Y = \gamma$.

Let $\Gamma$ be the vertical cylinder over $\gamma$, and let $u_j$ be the horizontal graph function
corresponding to the surface $Y_j$. A priori, the function $u_j$ is only defined on some vertical
strip where $x < \e_j$. The first step is to show that $\e_j$ can be chosen independently of $j$.
The only thing which prevents these graphs from existing on a uniform strip would be if the $u_j$
did not have a uniform gradient bound, or in other words, that there exists a sequence
$(s_j,x_j)$ with $s_j$ in the parameter interval for $\gamma$ and $x_j \searrow 0$, and such
that $|\nabla u_j(s_j,x_j)| = 1$, say (any positive number would do), and $|\nabla u_j(s_j,x_j)|< 1$
for all $s$ and for $x < x_j$. The gradient and norm here are with respect to the Euclidean metric.
Perform a hyperbolic rescaling by a factor $\frac{1}{x_j}$, centered at the point
$(\gamma(s_j),0)$, and then a translation and rotation to move $(s_j,0,0)$ to the origin in $\RR^2$ and
to make the rescaled curve $\gamma_j$ tangent to the $y^1$-axis. The result is a minimal surface
$\widetilde{Y}_j$ in the upper half-space, defined in a ball of expanding radius tending to infinity,
which passes through $(0,0,0)$ in the boundary, and which can be expressed as a horizontal graph
$y^2=F_j(y^1,x)$ over some large ball in the vertical $(y^1,x)$-plane. By construction, $F_j(0,1)=
\frac{u_j(s_j,x_j)}{x_j}$; by Rolle's theorem, this is bounded by $|\del_x u_j(s_j,x_j')|$ for
some $x_j' < x_j$, hence by construction $|F_j(0,1)|\leq 1$ and $\frac{\del F_j}{\del x}(0,1)=1/x_j$.

Passing to a subsequence, as $j \to \infty$ this minimal surface converges to a complete
minimal surface $\widetilde{Y} \subset \HH^3$ whose boundary is the limit of rescalings of $\gamma$,
i.e.\  a straight line, and which can be expressed as a horizontal graph $y^2=F(y^1,x)$ over all of
$\RR_{y^1}\times \RR^+_x$ with $|F(0,1)|\leq 1$. However, by construction, the tangent space of
$\widetilde{Y}$ is not vertical at the point $(0,1,F(0,1))$, which contradicts the fact that the
unique minimal surface in hyperbolic space with boundary a straight line is a totally geodesic plane.

This argument proves that the graph functions $u_j$ are defined on a uniform interval $[0,\e]$, and moreover that
the boundary curves at height $x=\e$ are also converging in $\calC^{3,\alpha}$ (in fact, in $\calC^\infty$ by
interior elliptic estimates). Notice that this already proves that no handles can slide off to infinity,
provided the boundary curves remain uniformly smooth enough. Let $Y_{j,\e} = Y_j \cap \{x \geq \e\}$. This is now
a sequence of compact minimal surfaces with boundary in the convex set $\{x \geq \e\} \subset M$. The proof will
be finished if we can prove that these surfaces have a convergent subsequence. This in turn follows from the
results of Anderson \cite{An3} and White \cite{Wh}. In order to apply their results, it suffices to show
that the genera of the $Y_{j,\e}$ remain bounded, which is obvious by definition, and that the areas of these
surfaces are also bounded. This follows from the Gauss-Bonnet theorem: since each $Y_j$ is minimal,
its Gauss curvature satisfies $K \leq -1$, which implies that
\[
\mbox{Area}\,(Y_{j,\e}) \leq \int_{Y_{j,\e}} (-K)\, dA = -2\pi \chi(Y_j) + \int_{\del Y_{j,\e}} \kappa\, ds.
\]
Now, the first term is fixed, so we must show that the second term is bounded. But this is immediate
from standard elliptic estimates applied to the graph function $u_j$ for $Y_j$ in the
annulus $\e/2 < x < 2\e$ since $\e$ is now fixed.
\end{proof}

As explained in \cite{W}, it is important to work with a slightly different regularity condition: we shall
replace $\calC^{k,\alpha}$ by the closure in this space of  $\calC^\infty$. This smaller subspace is separable,
whereas $\calC^{k,\alpha}$ is not, so with this new regularity restriction (which we shall not comment
on further) both $\widetilde{\calM}_k(M)$ and $\calE(\del M)$ are separable Banach manifolds.

Using all of these facts, we may now define the degree of $\Pi$ by
\[
\mbox{deg}(\Pi) = \sum_{Y \in \Pi^{-1}(\gamma)}  (-1)^{n(Y)},
\]
where $\gamma$ is a regular value of $\Pi$, so each $Y \in \Pi^{-1}(\gamma)$ is nondegenerate, and
where $n(Y)$ denotes the number of negative eigenvalues of $-L_Y$. This degree is a well-defined invariant
on each component of $\calE(\del M)$ (once we have fixed the integer $k$ and the component of
$\calM_k(M)$ mapping to that isotopy class of boundary curves).

For example, when $M = \HH^3$, and $\gamma$ is any convex curve, then by the maximum principle, there
is exactly one properly embedded minimal surface $Y$ with $\del Y = \gamma$,
and necessarily, its genus is $0$. This proves that when $M$ is the entire hyperbolic space, then
$\mbox{deg}\, (\Pi_0) = 1$ while $\mbox{deg}\, (\Pi_k) = 0$ for $k > 0$, on the component of
$\calE$ containing connected curves. This has some interesting consequences. For
example, Anderson \cite{An2} displayed a connected curve which bounds a minimal surface
of genus $k > 0$; by genericity, we can assume that this curve is regular for $\Pi$,
and since the degree equals zero, we obtain the existence of yet another element in
$\calM_k(\HH^3)$ with boundary equal to this same curve.  On the other hand, de Oliveira and Soret \cite{OS}
construct {\it stable} properly embedded minimal surfaces in $\HH^3$ with arbitrary genus, where
the boundary curve has any prescribed number of components. Here too, for any given boundary curve,
we conclude the existence of at least one other element of $\calM_k$ with that boundary curve
which is unstable. It would be interesting to compute the degree of $\Pi$ precisely in some of these
other cases.

\section{Area minimization and renormalized area}
We now investigate the role of locally area minimizing surfaces in the study of renormalized area
for minimal and nonminimal properly embedded surfaces.

\subsection{Renormalized area of absolute minimizers}
\begin{proposition}
\label{concl1} Let $\gamma$ be a $\calC^{3,\alpha}$ embedded curve
in $\del M$ which bounds in $M$. Suppose that $Y_1$ and $Y_2$ are
two properly embedded minimal surfaces with $\del Y_1 = \del Y_2 =
\gamma$. If $Y_1$ is area minimizing in $M$, then $\calA(Y_1) \leq
\calA(Y_2)$, and equality holds if and only if $Y_2$ is also an
area minimizer.
\end{proposition}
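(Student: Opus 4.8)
The plan is to compare the renormalized areas of $Y_1$ and $Y_2$ by exploiting the finite-$\e$ comparison of ordinary areas together with the fact that, by Proposition \ref{malaki}, the divergent part of the area of each $Y_{i,\e}$ is governed entirely by $\mathrm{length}(\gamma_{i,\e})/\e$, and these boundary lengths agree to the relevant order since $\del Y_1 = \del Y_2 = \gamma$. More precisely, I would first observe that both surfaces meet $\del M$ orthogonally and have graph functions over the common vertical cylinder $\Gamma$ with the same $u_2 = \tfrac12\kappa$ determined by $\gamma$; hence $\mathrm{length}(\gamma_{1,\e})$ and $\mathrm{length}(\gamma_{2,\e})$ differ by $\calO(\e)$, and the $1/\e$ singular terms in the two area expansions (\ref{eq:defrenarea}) are identical.

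Next I would use the area-minimizing hypothesis. Since $Y_1$ is area minimizing in $M$ and $\del Y_1 = \del Y_2$, one wants to say $\mathrm{Area}(Y_{1,\e}) \le \mathrm{Area}(Y_{2,\e})$ up to a controlled error near the boundary. The honest statement is a comparison on the compact region $\{x \ge \e\}$: after matching the two surfaces along the slice $\{x = \e\}$ by a thin collar (or by a small competitor interpolation), minimality of $Y_1$ gives $\mathrm{Area}(Y_{1,\e}) \le \mathrm{Area}(Y_{2,\e}) + o(1)$ as $\e \searrow 0$. Subtracting the common $\mathrm{length}(\gamma_\e)/\e$ term and letting $\e \to 0$ then yields $\calA(Y_1) \le \calA(Y_2)$.

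For the equality case, suppose $\calA(Y_1) = \calA(Y_2)$. Then the comparison above must be an equality in the limit, which forces $Y_2$ to be area minimizing as well: any strictly smaller compact competitor with the same boundary slice would, after renormalization, produce a surface of strictly smaller renormalized area than $Y_2$, contradicting $\calA(Y_1) = \calA(Y_2) \le \calA(\text{competitor})$ once we note $Y_1$ already realizes the infimum. Conversely, if $Y_2$ is an area minimizer then both $\calA(Y_1) \le \calA(Y_2)$ and $\calA(Y_2) \le \calA(Y_1)$ hold by symmetry, giving equality.

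The main obstacle is making the finite-$\e$ area comparison rigorous, since $Y_1$ being area minimizing in $M$ is a statement about \emph{compact} variations, whereas $Y_{1,\e}$ and $Y_{2,\e}$ need not share the same boundary curve $\gamma_\e$ at height $\e$; the curves $\gamma_{1,\e}$ and $\gamma_{2,\e}$ only agree to leading order. I expect the delicate step to be constructing an admissible competitor that reconciles these two boundary slices with an interpolating annulus whose area is $o(1)$ as $\e \searrow 0$ — this uses that both graph functions have the same $x^2$-coefficient, so the slices are $\calO(\e^2)$-close in the rescaled (boundary) metric, making the connecting collar negligible after renormalization.
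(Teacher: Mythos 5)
Your proposal follows essentially the same route as the paper's proof: truncate at $\{x \geq \e\}$, use the fact that the two graph functions agree through the $x^2$ term (indeed to order $x^3$, since $u_2 = \frac12\kappa$ is forced by $\gamma$ for both), form the competitor $Y_{2,\e} \cup S_\e$ where $S_\e$ is the interpolating strip in the slice $\{x=\e\}$, invoke the minimizing property of $Y_1$, and subtract the common divergent term. One small bookkeeping point: the slices are actually $\calO(\e^3)$-close (not just $\calO(\e^2)$) in the boundary metric, which is what makes the collar's hyperbolic area $\calO(\e)$ rather than merely bounded — and your equality-case argument, which the paper leaves implicit, is correct and runs exactly as you describe.
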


\begin{proof}
Fix a special boundary defining function $x$ and set $Y_{j,\e} = Y_j \cap \{x \geq \e\}$ and
$\gamma_{j,\e} = \del Y_{j,\e}$. The functions $u_1$ and $u_2$ for these two surfaces agree up to
order three, so in terms of any local coordinate $s$ on $\gamma$, $|u_1(s,x) - u_2(s,x)| \leq C x^3$, with
corresponding estimates for the first $3$ derivatives. If $S_\e$ denotes the region between $\gamma_{1,\e}$
and $\gamma_{2,\e}$ in $\{x=\e\}$, this gives
\[
\mbox{Area}\,(S_\e) = \calO(\e).
\]

Recalling that $\mbox{Area}\,(Y) = \e^{-1}\mbox{length}\,(\del Y) + \calA(Y) + \calO(\e)$, we obtain
\[
\mbox{Area}\,(Y_{2,\e}) - \mbox{Area}\,(Y_{1,\e}) = \calA(Y_2) - \calA(Y_1) + \calO(\e),
\]
since $\del Y_1 = \del Y_2$.

Thus if $\calA(Y_1) > \calA(Y_2)$, then the new (nonsmooth) surface $Y_{1,\e}' = Y_{2,\e} \cup S_\e$ would eventually
have area smaller than $Y_{1,\e}$. Indeed, by the inequalities above,
\[
\mbox{Area}\,(Y_{1,\e}') \leq \mbox{Area}\,(Y_{2,\e}) + C_1\e \leq \mbox{Area}\,(Y_{1,\e}) +
\calA(Y_2) - \calA(Y_1) + C_2 \e < \mbox{Area}\,(Y_{1,\e})
\]
for $\e$ small enough. This contradicts the fact that $Y_1$ is area-minimizing.
\end{proof}

\subsection{The renormalized area spectrum}
The result in this last subsection suggests the consideration of the set-valued function
\[
\calE \ni \gamma \overset{\calS_k}{\longmapsto} \{\calA(Y): Y \in \calM_k(M)\ \ \del Y = \gamma\},
\]
which we call the renormalized area spectrum (of degree $k$). Properness of the boundary map
ensures that $\calS_k(\gamma)$ is always a compact set. Proposition \ref{concl1} implies that
if $Y_0$ is any absolutely area minimizing surface with $\del Y_0 = \gamma$, then
$\underline{\calA}(\gamma) :=\calA(Y_0)$ is a lower bound for $\calS_k(\gamma)$ for
{\it every} $k$. On the other hand, trivially by (\ref{malaki}), this set is bounded above
by $-2\pi \chi(Y) = 2\pi (2k + \ell - 2)$, where $\ell$ is the number of components of $\gamma$.
In other words,
\begin{equation}
\calS_k(\gamma) \subset [\underline{\calA}(\gamma), 2\pi(2k+\ell-2) ].
\label{eq:limras}
\end{equation}
Note furthermore that the upper limit is never attained unless there exists a totally geodesic
minimal surface $Y$ with $\del Y = \gamma$, which never happens unless $\gamma$ is a round circle.
The lower bound is attained if and only if there exists a genus $k$ absolute area minimizer with boundary
$\gamma$.

\subsection{Minimizers of the extended renormalized area functional}
In section 4.2 of \cite{ht:assee}, Hirata and Takayanagi assert that minimizers of the extended renormalized area
functional amongst all surfaces with a given boundary are necessarily area minimizing surfaces with this same 
asymptotic boundary. Furthermore, Polyakov has communicated to us his suggestion (based on his work 
\cite{p:fss}) that the area-minimizers amongst all surfaces with a given boundary and a given genus must also 
minimize a functional that depepnds on a quadratic expression of the extrinsic curvature. Both these assertions
 follow easily from our techniques. Recall that we are denoting the extension of $\calA$ to this setting by $\calR$.

\begin{proposition} Let $\gamma$ be a $\calC^{3,\alpha}$ closed curve in $\del M$ which bounds in $M$.  Then
the infimum of $\calR(Y)$ where $Y$ ranges over the set of all $\calC^{3,\alpha}$ surfaces with $\del Y = \gamma$
which intersect $\del M$ orthogonally is attained only by absolutely area-minimizing surfaces.
\end{proposition}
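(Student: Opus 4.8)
The plan is to exploit the structural fact, brought out in \S2.3, that the divergent part of the truncated area $\int_{Y_\e}dA$ is governed entirely by the boundary curve $\gamma$ and is insensitive to the interior of $Y$. Consequently the renormalization defining $\calR$ cancels identically under any deformation supported away from $\del M$. My strategy is therefore to avoid the Euler--Lagrange apparatus entirely and argue directly that a surface realizing the infimum of $\calR$ must minimize area against \emph{all} compact variations, which is exactly what it means to be absolutely area-minimizing in Anderson's sense.

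Concretely, suppose $Y$ attains the infimum of $\calR$ over the class of $\calC^{3,\alpha}$ surfaces meeting $\del M$ orthogonally with $\del Y = \gamma$, and let $Y'$ be any competitor in this class which coincides with $Y$ outside a compact set $K$ contained in the interior of $M$. Since $Y' = Y$ on some collar $\{x < x_1\}$ with $K \subset \{x \geq x_1\}$, the two surfaces share the asymptotic boundary $\gamma$ and, for every $\e < x_1$, the very same truncation curve $\gamma_\e = \del Y_\e = \del Y'_\e$. The key step is then the identity
\[
\calR(Y') - \calR(Y) = \lim_{\e \to 0}\left( \int_{Y'_\e} dA - \int_{Y_\e} dA\right) = \mbox{Area}\,(Y' \cap K) - \mbox{Area}\,(Y \cap K),
\]
in which the divergent $\e^{-1}$ terms cancel because they depend only on $\gamma$, and the contribution of the collar $\{x_1 > x \geq \e\}$ drops out since the surfaces agree there; indeed the right-hand side is already independent of $\e$. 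Because $Y$ minimizes $\calR$, the left-hand side is nonnegative, whence $\mbox{Area}\,(Y \cap K) \leq \mbox{Area}\,(Y' \cap K)$ for every such $K$ and every competitor $Y'$. This says precisely that $Y$ is absolutely area-minimizing with respect to compact variations (and, as a byproduct, forces $H \equiv 0$, so that $Y$ is in fact minimal).

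I expect the remaining points to be bookkeeping rather than conceptual. First, one must check that the competitors $Y'$ remain admissible, but since $Y'$ agrees with $Y$ near $\del M$ it automatically meets $\del M$ orthogonally, has boundary $\gamma$, and lies in the required regularity class, so $\calR(Y')$ is well-defined by \S2.3. Second, one should reconcile the notion used here with the measure-theoretic notion of absolute minimizer: the argument as written gives minimality against $\calC^{3,\alpha}$ compact competitors, and passage to arbitrary rectifiable competitors is standard given the interior regularity of $Y$. The main point to articulate carefully, and the crux of the whole argument, is the cancellation of the $\e^{-1}$ divergence in the displayed identity; this is exactly where the fact that the renormalization depends only on $\gamma$ is essential. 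A slightly longer alternative would first use this same compact-variation argument to conclude only that $Y$ is minimal, and then compare $Y$ with an absolute area-minimizer $Y_0$ (which exists since $\gamma$ bounds, is $\calC^{3,\alpha}$ by Proposition \ref{tonegreg}, and meets $\del M$ orthogonally), invoking Proposition \ref{concl1} in both directions to squeeze $\calA(Y) = \calA(Y_0)$ and then applying its equality case; but the direct argument above has the virtue of not relying on the existence of $Y_0$ at all.
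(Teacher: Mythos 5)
Your argument is correct in substance, but it is a genuinely different route from the paper's. The paper never uses your localization identity. Instead it compares $Y$ against a sequence of absolutely area-minimizing surfaces $Y_j'$ spanning the truncated curves $\gamma_j = \del\bigl(Y \cap \{x \geq \e_j\}\bigr)$, proves by a rescaling argument (as in the properness Proposition \ref{pr:proper}) that $Y_j' \to Y'$ in $\calC^{3,\alpha}$ up to the boundary, where $Y'$ is an absolute minimizer with $\del Y' = \gamma$, and then establishes via the explicit graph-function estimate (\ref{goettingen}) the expansion $\mbox{Area}\,(Y_j') = \e_j^{-1}\mbox{length}(\gamma) + \calA(Y') + o(1)$. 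This yields the \emph{unconditional} lower bound $\calR(Y) \geq \calA(Y')$ for every admissible $Y$ — hence that the infimum is attained and equals $\calA(Y')$ — after which minimality of an extremal $Y$ follows from the same local-perturbation observation you make, and area-minimality from the equality case of Proposition \ref{concl1}. Your approach exploits the attainment hypothesis to bypass the paper's hardest analytic work (the boundary-regular convergence $Y_j' \to Y'$ and the area expansion), at the cost of proving only the literal statement; the paper's longer route identifies the value of the infimum and its attainment as well. Your secondary "squeeze" — $Y$ minimal, then $\calA(Y_0) \leq \calA(Y) = \calR(Y) \leq \calR(Y_0) = \calA(Y_0)$ with $Y_0$ an Anderson minimizer, then the equality case of Proposition \ref{concl1} — is fully rigorous given Proposition \ref{tonegreg} and the fact from \S 2.3 that minimal surfaces meet $\del M$ orthogonally, so that $Y_0$ is indeed an admissible competitor with $\calR(Y_0) = \calA(Y_0)$.

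One caution on your primary route: the localization identity gives minimization of area only among smooth embedded compactly-supported competitors in the $\calC^{3,\alpha}$ class. Upgrading this to "absolutely area-minimizing" in the sense used here (mass-minimizing among integral currents with respect to compact variations) requires the standard cut-and-paste: if $Y \cap K$ were beaten by a current, replace it by the minimizing current with the same boundary data on a generic transversal level, invoke interior and boundary regularity to see the replacement is smooth and embedded, and smooth the gluing corner with arbitrarily small area increase — noting that the unconstrained genus in the competitor class is exactly what makes the glued surface admissible. You flag this as "standard," and it is, but it is more than bookkeeping: it is the only point where geometric measure theory enters your direct argument, and it is precisely what your alternative squeeze (which quotes Proposition \ref{concl1} instead) avoids.
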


\begin{proof}
Let $Y$ be any $\calC^{3,\alpha}$ surface which intersects $\del M$ at $\del Y = \gamma$. We must show that
if $\calR(Y) \leq \calR(Y')$ for all other such surfaces $Y'$, then $Y$ is absolutely area-minimizing.
Fix a decreasing sequence $\e_j \searrow 0$ and let $Y_j = Y \cap \{x \geq \e_j\}$ and $\gamma_j = \del Y_j$.
Let $Y_j'$ denote an area minizing surface with $\del Y_j' = \gamma_j$. Following the original existence
proof by Anderson, possibly after passing to a subsequence, we may assume that
\begin{equation}
\label{converges}
Y_j'\rightarrow Y'
\end{equation}
where $Y'$ is properly embedded and area-minimizing with $\partial Y'=\gamma$. Standard results imply
that the convergence is in $\calC^\infty$ in the interior, and also $\calC^{3,\alpha}$ up
to the boundary. This may be proved by an argument very similar to that used in establishing
properness in Proposition \ref{pr:proper}. Indeed, if the convergence were not $\calC^1$,
we would be able to take a suitable rescaling and obtain a limiting surface which has boundary
a straight line in $\RR^2 = \del \HH^3$ but which is not a totally geodesic plane, which would
be a contradiction. Knowing this, if the convergence were not $\calC^{3,\alpha}$, we could again
rescale and extract a limiting surface which converges to a totally geodesic plane, but not
smoothly, which also contradicts standard convergence results for minimal surfaces.

Denote the genus of $Y'$ by $k$, so $Y'$ minimizes $\calA$ in $\cup_{s} \calM_s(M)$ amongst all minimal
surfaces with the same boundary and of arbitrary genus.

We now claim that
\begin{equation}
\label{compare} \mbox{Area}\,(Y_j) \geq \mbox{Area}\,(Y_j') =
\frac{1}{\e_j} \mbox{length}(\gamma)+ \calA(Y') +o (1).
\end{equation}
The first inequality is by definition. As for the second equality, we proceed in steps. Since 
$Y'_j\to Y'$ in $\calC^{3,\alpha}$, there exists $\rho>0$ so that each of the annuli $Y_j' 
\setminus Y'_{j,\rho} := Y'_j\cap \{x \leq \rho\}$ and $Y'\setminus Y'_\rho :=  Y' \cap \{x\leq\rho\}$ 
is a normal graph over some portion of the vertical cylinder $\Gamma$ of height $\rho$ above $\gamma$. 
In particular, suppose that $Y'_j\setminus Y'_{j,\rho}$ is the graph of a function $u_j$, defined on 
some band $\Gamma_j = \{\epsilon_j \leq x \leq \rho\}$ and $Y' \setminus Y'_\rho$ is the
graph of a function $u$ defined on the band $\{ 0\le x\leq\rho\}$. Clearly the areas of the portions 
where $x \geq \rho$ converge, i.e.\ $\mbox{Area}\,(Y'_{j,\rho}) \to \mbox{Area}\,(Y'_\rho)$. Thus if
we denote by $Y'{j,\rho,\e_j}$ the portion of $Y'$ that lies below the hyperplane $x=\rho$ and above
the hyperplane $x=\epsilon_j$ it suffices to show that
\[
\lim_{j \to \infty}\left(\mbox{Area}\, (Y'_{j,\rho,\e_j} -\mbox{Area}\, (Y'_j \setminus Y'_{j,\rho}) \right) = 0.
\]
This follows by direct computation. First write $u_j(x,s)=x^2\tilde{u}_j(x,s)$, $u(x,s)=x^2\tilde{u}(x,s)$, so that
$\tilde{u}_j\to \tilde{u}$ in $\calC^1$. Denote by $J_u, J_{u_j}$ the Jacobians for parametrizations of these
surfaces in the $(s,x)$ coordinate charts as in \S \ref{vms}. Using the formul\ae\ from that section we compute
that, with constants independent of $j$,
\begin{equation}
\label{goettingen}
\begin{split}
&|\left(\mbox{Area}\, (Y'_{j,\rho,\e_j} -\mbox{Area}\, (Y'_j \setminus Y'_{j,\rho}) \right)| =
\left|\int_{\e_j}^\rho\int_\gamma \frac{J_u-J_{u_j}}{x^2}\, dsdx \right|
\\ &\leq C \, \int_{\e_j}^\rho \frac{1}{x^2}\int_\gamma \left| u_s^2-u_{j,s}^2-\left((1-\kappa u)^2(1+u_x^2)
-(1-\kappa u_j^2)(1+u_{j,x}^2)\right)\right|\, dsdx
\\ &\le C'\int_{\e_j}^\rho\int_\gamma|\tilde{u}_j-\tilde{u}|+|\tilde{u}_{j,s}-\tilde{u}_s|+
|\tilde{u}_{j,x}-\tilde{u}_x|\, dsdx \to 0
\end{split}
\end{equation}
as $j \to \infty$. This proves (\ref{compare}).
Subtracting $\e_j^{-1}\mbox{length}\,(\gamma)$ from (\ref{compare}) and passing to the limit gives
\[
\calR(Y) \geq \calR(Y') = \calA(Y').
\]

The second step of the proof is to show that if $\calR(Y)= \calR(Y')$ then $Y$ must be an area-minimizer.
By Proposition \ref{concl1}, it suffices to show that $Y$ is minimal. If it is not, then its mean curvature
$H$ is nonvanishing in some open set. We can then perturb $Y$ locally, in some small ball $Y \cap B_r(p)$,
to a new surface $Y''$ which is also smooth, but has smaller area in this ball. Clearly the areas of the
truncations to $x \geq \e$ satisfy $\mbox{Area}\,(Y''_\e)
< \mbox{Area}\,(Y'_\e)$ for $\e$ small, which in turn implies that $\calR(Y) < \calR(Y')$,
contrary to what we proved above.

\end{proof}

\section{First and second variations of renormalized area}
We now begin the variational analysis of the renormalized area functional $\calA$ on each of the moduli spaces
$\calM_k(M)$, as well as for its extension to the unconstrained spaces $\widetilde{\calM}_k(M)$. The first
variation formula for $\calA$ on $\calM_k$ is formally analogous to the corresponding first variation formula
for the renormalized volume of Poincar\'e-Einstein metrics in even dimensions; this formula appears in a paper by
Anderson \cite{An4}, see also Albin \cite{Alb} for a simpler approach. A formula for the second variation of
renormalized volume does not seem to have been computed in the Poincar\'e-Einstein setting.

\subsection{The first variation}
We shall compute the first variation of $\calA$ at any $Y\in \calM_k(M)$; slightly more generally,
we compute the first variation of the extended functional $\calR$ at any $Y\in \widetilde{M}_k(M)$.
Actually, we compute $D\calR$ only applied to compactly supported perturbations, and show
this is not well-defined for arbitrary variations in $\widetilde{M}_k(M)$.

As before, fix a special boundary defining function $x$ on $M$, and for any $Y \in \calM_k(M)$
let $u_3$ denote the free third order term in the expansion of the graph function $u$ of $Y$
with respect to $x$ over its vertical cylinder.

\begin{theorem}
\label{theformula} Fix any $Y \in \calM_k(M)$ with $\del Y = \gamma$. Let $Y_t$ denote a smooth
one-parameter curve in $\widetilde{\calM}_k(M)$, $\del Y_t = \gamma_t$, with $Y_0=Y$ (so the surface
$Y_0$ is minimal but the surfaces $Y_t$, $t\neq 0$, need not be). Write $Y_t$ as a normal graph
$Y_{\phi}$ over $Y_0 = Y$ via
\[
Y_t = \{ F_t(p) = \exp_p (\phi_t(p)\nu(p)): p \in Y\}
\]
where $\phi_0 = 0$.

Setting $\dot{\phi} = \left. \frac{d\,}{dt} \phi_t \right|_{t=0}$, then $\dot{\phi} \sim
x^{-1}\dot{\phi}_0 + \ldots$, and we have
\[
\left. \frac{d\, }{dt}\right|_{t=0} \calA(Y_t)=-3\int_{\gamma} \dot{\phi}_0 \, u_3\, ds.
\]

On the other hand, if $Y\in\widetilde{\calM_k}(M)$ and $\phi_t=0$ outside some compact set $K\subset Y$, then:
\[
\left. \frac{d\, }{dt}\right|_{t=0} \calA(Y_t)= 2\int_Y H \dot{\phi}dA
\]
\end{theorem}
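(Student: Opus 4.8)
The plan is to work directly from the Hadamard definition (\ref{eq:defrenarea}): writing $Y_{t,\e} = Y_t \cap \{x \geq \e\}$ and $\gamma_{t,\e} = \del Y_{t,\e}$, one has $\calA(Y_t) = \mathrm{Area}(Y_{t,\e}) - \e^{-1}\mathrm{length}(\gamma_{t,\e}) + \calO(\e)$, where ``length'' is measured in $\olg = x^2 g$ and is an $\calO(1)$ quantity. Hence
\[
\left.\frac{d}{dt}\right|_{0}\calA(Y_t) = \lim_{\e\to 0}\left(\left.\frac{d}{dt}\right|_{0}\mathrm{Area}(Y_{t,\e}) - \frac{1}{\e}\left.\frac{d}{dt}\right|_{0}\mathrm{length}(\gamma_{t,\e})\right).
\]
The variation field along $Y$ is $X = \dot{\phi}\,\nu$; since $\nu = x\olnu$ and $\dot{\phi}\sim x^{-1}\dot{\phi}_0$, this is bounded with respect to $\olg$ and moves $\gamma$ by $\dot{\phi}_0\olN$. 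The first reduction I would make uses that $Y$ is \emph{minimal}: as $H\equiv 0$ on $Y$, the first variation of the area form $\mathrm{div}_Y(\dot{\phi}\,\nu) = -2H\dot{\phi}$ vanishes at $t=0$, so the bulk term drops out and $\frac{d}{dt}\mathrm{Area}(Y_{t,\e})$ is a pure flux through the moving truncation curve. Concretely, by the transport theorem applied to the domain $\{x\ge\e\}$,
\[
\left.\frac{d}{dt}\right|_{0}\mathrm{Area}(Y_{t,\e}) = \int_{\gamma_\e}\frac{\dot{\phi}\,dx(\nu)}{|\nabla_Y x|_g}\,ds .
\]

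Next I would insert the boundary expansions from \S\ref{vms}. Using $\nu = x\olnu$ and the formula (\ref{eq:defnu}) for $\olnu$ gives $dx(\nu) = -x u_x w / J \sim -2u_2\, x^2 - 3u_3\, x^3 + \ldots$, while $|\nabla_Y x|_g = x(1+\calO(x^2))$ and $ds = \e^{-1}(1+\calO(\e^2))\,d\sigma$ along $\gamma_\e$. Two structural facts then control the answer. First, minimality enters through $u_2 = \tfrac12\kappa$: this makes the $x^1$--coefficient of the flux integrand vanish, so no logarithmic term appears and the flux expands cleanly as $C_{-1}\e^{-1} + C_0 + \calO(\e)$ with $C_{-1} = -\int_\gamma \kappa\,\dot{\phi}_0\,d\sigma$. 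Second, because every $Y_t$ lies in $\widetilde{\calM}_k(M)$ it meets $\del M$ orthogonally, which forces the $x^0$--coefficient of $\dot{\phi}$ to vanish (equivalently, the cylinder graphs have no $x^1$ term, as in the side remark of \S\ref{vms}); thus the $x^2$--coefficient of $\dot{\phi}\,dx(\nu)$ is exactly $-3u_3\dot{\phi}_0$, with no contamination from subleading coefficients of $\dot{\phi}$. Consequently $C_0 = -3\int_\gamma u_3\dot{\phi}_0\,d\sigma$.

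It remains to handle the length term and the cancellation. A direct computation of the $\olg$--length of $\gamma_{t,\e}$ via the graph over the cylinder of $\gamma_t$ shows $\mathrm{length}(\gamma_{t,\e}) = \mathrm{length}_{h_0}(\gamma_t) + \calO(\e^2)$, with \emph{no} $\calO(\e)$ correction; hence $\e^{-1}\frac{d}{dt}\mathrm{length}(\gamma_{t,\e})$ has divergent part $\e^{-1}\frac{d}{dt}\mathrm{length}_{h_0}(\gamma_t) = -\e^{-1}\int_\gamma \kappa\,\dot{\phi}_0\,d\sigma$ and vanishing finite part. Therefore the $\calO(\e^{-1})$ divergences of the two terms cancel --- a consistency check forced by the finiteness of $\calA$ and by $u_2 = \tfrac12\kappa$ --- and the finite remainder is precisely $C_0 = -3\int_\gamma \dot{\phi}_0 u_3\,ds$, proving the first formula. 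I expect the delicate step to be exactly this finite-part bookkeeping: one must verify that orthogonality eliminates the intermediate $x^0$ coefficient of $\dot\phi$ and that minimality both removes the would-be $\log$ term and matches the two leading divergences. Structurally the surviving constant $-3$ is the gap $\mu_2 - \mu_1 = 2 - (-1)$ between the indicial roots of the Jacobi operator $L_Y$, and the formula is the minimal-surface analogue of the renormalized-volume pairing (\ref{eq:varrevol}).

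Finally, the second formula follows from the same decomposition with no renormalization subtlety. If $\dot{\phi}$ is supported in a compact subset of the interior of $Y$, then $\gamma_t \equiv \gamma$ is fixed, so the length term does not vary and, for $\e$ small, the flux through $\gamma_\e$ vanishes. What survives is the ordinary bulk first variation of area,
\[
\left.\frac{d}{dt}\right|_{0}\mathrm{Area}(Y_{t,\e}) = \int_Y \mathrm{div}_Y(\dot{\phi}\,\nu)\,dA = 2\int_Y H\dot{\phi}\,dA ,
\]
using the paper's convention $H = \tfrac12\tr k$. Here $Y \in \widetilde{\calM}_k(M)$ need not be minimal, so this bulk term is genuinely present, and since the asymptotic boundary is unmoved the limit $\e\to 0$ is harmless.
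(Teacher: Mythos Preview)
Your approach is correct and takes a genuinely different route from the paper's. The paper does \emph{not} work from the Hadamard definition; instead it differentiates the local formula from Proposition~\ref{malaki}, namely $\frac12\int_{Y_t}\big((\tr k_t)^2 - |k_t|^2\big)\,dA_t$. This requires computing the variations $Tg_{ij}$, $Tk_{ij}$, $T\,dA$ in a moving orthonormal frame, invoking the pointwise identity $(\tr k)^3 - 3|k|^2\tr k + 2\tr(k\circ k\circ k) = 0$, and integrating $-\langle\nabla^2\dot\phi,k\rangle + \Delta\dot\phi\,\tr k$ by parts; the contracted Codazzi equation kills the interior term, and the surviving boundary integral over $\gamma_\e$ is expanded via the explicit asymptotics of $k_{ij}$, $g^{ij}$ and the conormal $n$ to produce $-3\int_\gamma\dot\phi_0\,u_3\,d\bar{s}$. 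Your transport-theorem argument, by contrast, needs no second-fundamental-form calculus and is precisely the minimal-surface analogue of Albin's simplification of~(\ref{eq:varrevol}), which the paper cites but does not imitate here. What the paper's route buys is that the minimal and compactly-supported-nonminimal cases fall out of a single computation (the latter via the surviving bulk term $2\int_Y\dot\phi\,\tr k\,dA$), and the intermediate formula $Tk_{ij} = -(\nabla^2\dot\phi)_{ij} + \dot\phi\,(k\circ k)_{ij} + R_{3i3j}$ is reusable elsewhere. Two small remarks on your write-up: the absence of a $\log\e$ term is a consequence of orthogonality (which forces $J = 1 + \calO(x^2)$), not of $u_2 = \tfrac12\kappa$; minimality enters separately by killing the bulk $H$-term and by making your $C_{-1} = -\int_\gamma 2u_2\dot\phi_0$ match the first variation of length $-\int_\gamma\kappa\,\dot\phi_0$. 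And there is a sign inconsistency in your last paragraph: with the paper's convention $k_{ij} = -\langle\nabla_{e_i}e_j,\nu\rangle$ one has $\mathrm{div}_Y(\dot\phi\,\nu) = \dot\phi\,\tr k = +2H\dot\phi$, not $-2H\dot\phi$ as you first wrote (your final answer is nevertheless correct).
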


\begin{proof}
By virtue of (\ref{formula}), we have
\begin{equation}
\label{seismos} \left. \frac{d\,}{dt}\right|_{t=0} \calA
(Y_t)=\frac{1}{2} \left. \frac{d\, }{dt}\right|_{t=0} \left(
\int_{Y_{t}}(\tr k_t)^2- |k_t|^2\, dA_{t} \right),
\end{equation}
where $k_t$ and $dA_t$ denote the second fundamental form and area form on $Y_t$  ($\tr k_t$ denotes the
trace of the second fundamental form of $Y_t$).

In order to compute this, introduce the following notation. Fix any point $p \in Y$
and choose an orthonormal moving frame $\{e_1,e_2\}$ on $Y$ near $p$, as well as a unit
normal vector field $\nu$, so that $e_1$ and $e_2$ are the principal directions and
$\nabla_{e_i}e_j = - \kappa_i \delta_{ij} \nu$ at $p$ (the $\kappa_i$ are the
principal curvatures of $Y$). Define $\eta_i(t) = (F_t)_*(e_i)$, so $\eta_i(0) = e_i$
but $\{\eta_1(t),\eta_2(t)\}$ is no longer orthonormal when $t \neq 0$. We define
\[
g_{ij}(t) = \langle \eta_i(t), \eta_j(t) \rangle \qquad \mbox{and} \qquad
k_{ij}(t) = - \langle \nabla_{\eta_i(t)} \eta_j(t), \nu(t)\rangle,
\]
where $\nu(t)$ is the unit vector orthogonal to both $\eta_1(t)$ and $\eta_2(t)$ with
$\nu(0) = \nu = e_3$. Finally, write $T = F_* \del_t$, so $T = \dot{\phi}\nu$ when $t=0$.

In the following, all terms are to be computed eventually at $p$. We first compute that
\[
\left. \nabla_T \eta_i \right|_{t=0} = \left. \nabla_T \nabla_{\eta_i} F \right|_{t=0} =
\left. \nabla_{e_i} \nabla_T F \right|_{t=0} =
\nabla_{e_i} \left(\dot{\phi}\nu\right) = \dot{\phi}\kappa_i e_i + \dot{\phi}_i \nu,
\]
which yields immediately
\[
\left. T g_{ij}(t) \right|_{t=0} = \langle \dot{\phi}\kappa_i e_i + \dot{\phi}_i \nu, e_j \rangle +
\langle e_i, \dot{\phi}\kappa_j e_j + \dot{\phi}_j \nu \rangle = 2 \dot{\phi} k_{ij},
\]
since $k_{ij} = \kappa_i \delta_{ij}$ in this frame. This also implies that $Tg^{ij}(t)|_{t=0}=-2\dot{\phi}k^{ij}$.
It is also standard that
\[
\left. \frac{d\,}{dt}\right|_{t=0}\, F^*(dA_{t}) = \dot{\phi} (\tr k_{ij})\, dA_0.
\]
Finally,
\[
\left. T k_{ij}(t) \right|_{t=0} = - T \langle \nabla_{\eta_i} \eta_j, \nu \rangle
= \left. - \left(\langle \nabla_{T} \nabla_{\eta_i}\eta_j, \nu\rangle + \langle \nabla_{\eta_i}\eta_j,
\nabla_{T}\nu \rangle\right)\right|_{t=0}.
\]
By assumption, $\nabla_{e_i}e_j$ is orthogonal to $Y$, while $\nabla_\nu \nu$ is tangential,
so the second term on the right vanishes. By definition
of the curvature tensor, the other term equals
\begin{equation*}
\begin{split}
- \langle \nabla_{\eta_i} \nabla_T \eta_j + \nabla_{[T,\eta_i]} \eta_j + R(T, \eta_i)\eta_j, \nu \rangle
= - \langle \nabla_{e_i} (\dot{\phi}\kappa_j e_j + \dot{\phi}_j \nu), \nu \rangle
- \dot{\phi} \langle R(\nu,e_i)e_j,\nu \rangle.
\end{split}
\end{equation*}
Observe that
\[
[T,\eta_i] = \nabla_{T} e_i - \nabla_{e_i} (\dot{\phi}\nu) =
\dot{\phi}\kappa_i e_i + \dot{\phi}_i \nu -
\dot{\phi}_i \nu - \dot{\phi}\kappa_i e_i = 0,
\]
so expanding out yields that
\[
\left. T \, k_{ij} \right|_{t=0} = \dot{\phi}
 \kappa_j^2 \delta_{ij} - \dot{\phi}_{ij} + R_{3i3j} =
-(\nabla^2\dot{\phi})_{ij} + \dot{\phi} (k \circ k)_{ij} + R_{3i3j}.
\]

Putting these formul\ae\ together, we compute that
\begin{equation}
\begin{split}& \left. \frac{d\,}{dt} \right|_{t=0} \,  \frac{1}{2}\int_{Y_t} (\tr
k_t)^2-|k_t|^2\, dA_t = -\int_Y \left( -\nabla^2
\dot{\phi}_{ij}\right) k^{ij}+\Delta\dot{\phi} \,\tr k \,dA \\&
+\int_Y (\tr k)^3-3|k|^2(\tr k)+2\tr(k\circ k\circ k)\, dA+2\int_Y
\dot{\phi}({R_{3i3}}^i\tr k -R_{3i3j}k^{ij}) \, dA.
\end{split}
\end{equation}
(Here $\tr(k\circ k\circ k) = g^{ip}g^{jq}g^{\ell r}k_{ir}k_{pj} k_{q \ell}$.)
A simple calculation using the principal curvature decomposition shows that
\[
(\tr k)^3-3|k|^2(\tr k)+2\tr(k\circ k\circ k)=0,
\]
while since $M$ is hyperbolic,
\[
{R_{3i3}}^i\, \tr k -R_{3i3j}\, k^{ij}=-\tr k.
\]
This proves
\begin{equation}
\left. \frac{d\,}{dt} \right|_{t=0} \,  \frac{1}{2}\int_{Y_t} (\tr
k_t)^2-|k_t|^2\, dA_t = -\int_Y  -\nabla^2 \dot{\phi}_{ij}
k^{ij}+\Delta \dot{\phi}\, \tr k\, dA+ 2\int_Y \dot{\phi}\, \tr k\, dA.
\end{equation}

The next step is to evaluate $\int_Y \left(-\nabla^2 \dot{\phi}_{ij} k^{ij}+ \Delta\,
\dot{\phi} \, \tr k\right)\, dA$. To do this, we integrate by parts on $Y_\e = Y \cap \{x \geq \e\}$ to get
\begin{multline}
- \int_{Y_\e} \left( \langle \nabla^2 \dot{\phi} , k\rangle-\Delta\dot{\phi}\, \tr k\right)\, dA = \\
\int_{Y_\e} \dot{\phi}_i \, (\nabla^j k_{ij}-\nabla^ik_j^{\ j}) \, dA
+ \int_{\gamma_\e} \left(\langle \nabla \dot{\phi}, n \rangle \, \tr k - k(\nabla \dot{\phi}, n)\right)
\, ds, 
\end{multline}
where $\gamma_\e = \del Y_\e$ and $n$ is the $g$-unit normal in $Y_\e$ to $\gamma_\e$.
The contracted Codazzi equation states that
\[
\nabla^jk_{ij}= \nabla_i \tr k+ \Ric_{\nu i},
\]
but in fact $\Ric_{\nu i} = -2 g_{\nu i} = 0$ since the $i$ index refers to a vector tangent to $Y$.
Hence only the boundary terms remain. If $Y\in \widetilde{M}_k(M)$ then since $\dot{\phi}$ is compactly supported,
these boundary terms vanish.  If $Y\in \calM_k(M)$, however, then $\tr k=0$ so the first part of the integrand
in the boundary integral vanishes and we only have to evaluate the second one.

To do this, revert to the $(s,x)$ coordinates introduced in \S 2. In terms of these,
the expression becomes
\[
\int_{\gamma_\e} \dot{\phi}_i g^{ij} k_{j\ell}n^\ell \, ds.
\]
To calculate this, we first note that the unit normal $n = n^1 \del_s + n^2 \del_x$ has
coefficients which satisfy
\[
n^1 = \calO(x^3), \quad n^2 = 1 + \calO(x^2).
\]
Thus we may as well set $\ell = 2$ and $n^2 = 1$. Note also that $g^{ij} = x^2 \olg^{ij}$
and $ds = x^{-1}\, d\overline{s}$, where $\overline{s}$ is arclength on $\gamma$
with respect to $\olg$. (This differs slightly from our earlier convention.) The terms in
the integrand are thus
\[
\left(\dot{\phi}_1 x^2 (\olg^{11} k_{12} + \olg^{12}k_{22}) + \dot{\phi}_2 x^2
(\olg^{21} k_{12} + \olg^{22} k_{22}) \right)\, d\overline{s}.
\]
Finally, recall that $\dot{\phi} \sim x^{-1}\dot{\phi}_0$ and use the expansions for the $k_{ij}$
and $g_{ij}$ to deduce that this reduces to
\[
- 3 \int_{\gamma_\e}  \dot{\phi}_0 u_3\, d\overline{s} + \calO(\e).
\]
This completes the proof.
\end{proof}

From this formula we deduce the
\begin{corollary}
Suppose that $Y \in \calM_k(M)$ is a critical point for $\calA$. If $Y$ is a nondegenerate
element in $\calM_k$ (i.e.\ it has no decaying Jacobi fields), then the coefficient $u_3$
in its boundary expansion vanishes identically. In general, if $Y$ is degenerate, then we may
only conclude that $u_3$ is orthogonal to the finite dimensional space spanned by leading 
coefficients $\psi_0$ of Jacobi fields $\psi \sim \psi_0 x^{-1} +  \psi_1 x^0 + \ldots$ along 
$Y$. This is equivalent to the statement that $u_3$ lies in the (finite dimensional) span of 
all leading coefficients $\psi_3$ of decaying Jacobi fields $\psi \sim \psi_3 x^2 + \ldots$ on $Y$. 

If $Y \in \widetilde{M}_k(M)$ is critical for $\calR$, then $Y$ is necessarily minimal,
and $u_3 = 0$ regardless of whether or not $Y$ is degenerate in $\calM_k(M)$.
\label{co:crit}
\end{corollary}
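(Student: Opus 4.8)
The plan is to extract all three assertions from the two first-variation formulas of Theorem~\ref{theformula}, using the description of the tangent spaces to $\calM_k(M)$ and $\widetilde{\calM}_k(M)$ from \S4 together with a Green's-formula pairing between the growing and decaying Jacobi fields on $Y$. Recall that a tangent vector to $\calM_k(M)$ at $Y$ is precisely a Jacobi field $\psi$ (a solution of $L_Y\psi=0$ that is $\calC^{3,\alpha}$ up to $\gamma$), that such a $\psi$ has leading behaviour $\psi\sim\psi_0 x^{-1}+\ldots$, and that $D\Pi_Y$ sends $\psi$ to its boundary datum $\psi_0$.

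First I would treat the nondegenerate case. There $Y$ carries no decaying Jacobi fields, so $D\Pi_Y$ is an isomorphism and every $\dot\phi_0\in\calC^{3,\alpha}(\gamma)$ is realized as the boundary datum of a genuine tangent vector to $\calM_k(M)$. Criticality of $\calA$ together with the formula $\left.\frac{d}{dt}\right|_{t=0}\calA(Y_t)=-3\int_\gamma\dot\phi_0\,u_3\,ds$ then forces $\int_\gamma\dot\phi_0\,u_3\,ds=0$ for all $\dot\phi_0$, and since $u_3$ is continuous this yields $u_3\equiv0$. In the degenerate case the identical argument gives only $\int_\gamma\psi_0\,u_3\,ds=0$ for every $\psi_0$ in the image of $D\Pi_Y$, i.e.\ for every leading coefficient of a Jacobi field on $Y$; this is the first assertion.

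The crux is the equivalence of this with the statement that $u_3$ lies in the span of the coefficients $\chi_3$ of the decaying Jacobi fields. For this I would pair a growing Jacobi field $\psi\sim\psi_0 x^{-1}+\ldots$ against a decaying one $\chi\sim\chi_3 x^2+\ldots$ by integrating the identity $\chi L_Y\psi-\psi L_Y\chi=0$ over $Y_\e$ and letting $\e\searrow0$. Inserting the two boundary expansions, together with $n=\mp x\del_x+\ldots$ and $ds=x^{-1}d\overline{s}$, collapses the boundary term to a nonzero multiple of $\int_\gamma\psi_0\chi_3\,d\overline{s}$, exactly as in the integration by parts already performed in \S4; hence the space of $\psi_0$'s and the space of $\chi_3$'s are $L^2(\gamma)$-orthogonal. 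Since $\Pi$ is Fredholm of index $0$, the image of $D\Pi_Y$ has codimension $\ell:=\dim\ker D\Pi_Y$, while the assignment $\chi\mapsto\chi_3$ is injective by the unique continuation theorem of \cite{Ma-uc}, so the span of the $\chi_3$'s has dimension exactly $\ell$. Two orthogonal subspaces of complementary dimension are orthogonal complements, so $u_3\perp\{\psi_0\}$ is equivalent to $u_3\in\mathrm{span}\{\chi_3\}$. I expect this orthogonality-plus-dimension-count to be the one genuinely delicate point, as it combines the index computation from \S4 with the injectivity of the leading-coefficient map.

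Finally, for $Y\in\widetilde{\calM}_k(M)$ critical for $\calR$, I would first feed compactly supported variations into the second formula $2\int_Y H\dot\phi\,dA$; its vanishing for all such $\dot\phi$ forces $H\equiv0$, so $Y$ is minimal and hence lies in $\calM_k(M)$. The key remaining point is that deformations inside $\widetilde{\calM}_k(M)$ are unconstrained, so the boundary datum $\dot\phi_0$ now ranges over all of $\calC^{3,\alpha}(\gamma)$ rather than over the image of $D\Pi_Y$. Since $Y$ is minimal the first formula applies verbatim, and criticality gives $\int_\gamma\dot\phi_0\,u_3\,ds=0$ for arbitrary $\dot\phi_0$, whence $u_3\equiv0$ irrespective of whether $Y$ is degenerate in $\calM_k(M)$.
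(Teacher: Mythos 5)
Your proposal is correct and follows essentially the same route the paper intends: the corollary is deduced directly from the two first-variation formulas of Theorem~\ref{theformula}, with the nondegenerate/degenerate dichotomy governed by whether $D\Pi_Y$ is surjective, and with minimality of critical points of $\calR$ forced first by compactly supported variations. Your one genuinely nontrivial step --- pairing growing against decaying Jacobi fields via Green's identity, then using the index-zero property of $\Pi$ and the unique-continuation injectivity of $\chi \mapsto \chi_3$ to identify the annihilator of the image of $D\Pi_Y$ with $\mathrm{span}\{\chi_3\}$ --- is exactly the boundary-term computation the paper already performs in \S 4 (there yielding $-2\int_\gamma \hat{\psi}\,u_0$, where only the nonvanishing of the constant matters), so you have simply made explicit what the paper leaves implicit.
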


We remark that it would be quite interesting to know if there exist degenerate
critical points of $\calA$. 

\subsection{The second variation}
We now derive the second variation formula for $\calA$ at a general element $Y \in \calM_{k}$. 
The calculation is more involved than for the first variation formula, but is actually fairly
elementary since it relies primarily on some detailed trigonometric calculations.

Before stating the result, we set up the basic notation. We use the same language as in Theorem \ref{theformula}. 
Let $Y_t$ be any $1$-parameter family of surfaces in $\calM_k(M)$, with corresponding family of boundary curves 
$\gamma_t$ in $\del M$. The calculations are local, so we assume that $M = \HH^3$ and we work in
standard upper half-space coordinates. Let $\olN_t$ denote the inward-pointing unit normal to $\gamma_t$
with respect to the Euclidean metric on $\RR^2$. Let $(s,v)$ be Fermi coordinates around $\gamma_0$, i.e.\ 
corresponding to the map $(s,v) \mapsto \gamma_0(s) + v \olN_0(s)$. Then $\gamma_t$ can be written as
a normal graph, $v = \psi_t(s)$, where $\psi_0 \equiv 0$ and $\dot{\psi} \olN_0$ is the normal variation
vector field. Similarly, introduce Fermi coordinates for the Euclidean metric around $Y_0$, using the
inward unit normal $\olnu$; the cylindrical coordinates $(s,x)$ parametrize $Y_0$, and we write
$Y_t$ as a normal graph $v = \phi_t(s,x)$ using the $\olg$-unit normal $\olnu$. The variation vector
field is $\dot{\phi}\olnu$, so $\dot{\phi}$ is a solution of the Jacobi equation $L_{Y_0}\dot{\phi} = 0$. 
The regularity theory for this equation shows that it has an expansion 
\[
\dot{\phi}(s,x) \sim \dot{\phi}_0(s) + \dot{\phi}_2(s) x^2 + \dot{\phi}_3(s)x^3 + \ldots,
\]
where $\dot{\phi}_0(s) = \dot{\psi}(s)$. If $Y_0$ is nondegenerate, the coefficient of $x^3$
is determined by a global Dirichlet-to-Neumann map in terms of the leading coefficient, which we write as
\[
\dot{\phi}_3 =  \frac{1}{6} \left. \del_x^3 \right|_{x=0} \dot{\phi} = \calD_{Y_0} \dot{\phi}_0.
\]

\begin{theorem}
The second variation of $\calA$ along the family $Y_t$ is given by
\[
\left. D^2\calA \right|_{Y_0} (\dot{\phi},\dot{\phi})  = 
-3 \int_{\gamma_{0}} \dot{\phi}_0 \dot{\phi}_3 \, d\overline{s},
\]
or equivalently, when $Y_0$ is nondegenerate, 
\[
- 3 \int_{\gamma_{0}} \dot{\phi}_0 \calD_{Y_0}\dot{\phi}_0 \, d\overline{s}.
\]
\end{theorem}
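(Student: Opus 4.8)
The plan is to obtain the second variation by differentiating the first variation formula of Theorem \ref{theformula} along the family. Since every $Y_t$ lies in $\calM_k(M)$ and is therefore minimal, that theorem applies at each parameter value and gives
\[
\frac{d}{dt}\calA(Y_t) = -3\int_{\gamma_t} \beta(t)\, u_3(t)\, d\overline{s}_t,
\]
where $\beta(t)$ is the leading boundary coefficient of the velocity field $\partial_t Y_t$ (equivalently the normal speed of $\gamma_t$, so $\beta(0)=\dot{\phi}_0=\dot{\psi}$) and $u_3(t)$ is the globally determined third Cauchy coefficient of $Y_t$ from \S\ref{vms}. First I would fix the cylindrical coordinates $(s,x)$ adapted to $Y_0$ and realize the family as the normal graphs $v=\phi_t(s,x)$ described in the setup, so that the velocity at $t=0$ is the Jacobi field $\dot{\phi}$ and the boundary curves are the graphs $v=\psi_t(s)$. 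Differentiating the displayed identity at $t=0$ then splits, by the product and transport rules for an integral over a moving curve, into a term in which $\tfrac{d}{dt}$ falls on $u_3(t)$ and terms in which it falls on $\beta(t)$, on the arclength measure $d\overline{s}_t$, and on the domain $\gamma_t$.

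The decisive term is the one differentiating $u_3(t)$, and the key identity I would establish is
\[
\left.\frac{d}{dt}\right|_{t=0} u_3(t) = \dot{\phi}_3,
\]
i.e.\ that the $t$-derivative of the global coefficient of $Y_t$ equals the order-$x^3$ coefficient of the Jacobi field. This is where the real content lies: $u_3$ is the one coefficient of the minimal-surface expansion that is \emph{not} locally determined, so it cannot be read off formally and must be matched across parametrizations. Concretely, $u_3(t)$ is defined through the graph of $Y_t$ over its \emph{own} moving vertical cylinder $\Gamma_{\gamma_t}$, whereas $\dot{\phi}_3$ comes from the normal graph over the fixed surface $Y_0$; passing between the two requires tracking the $\calO(x^2)$ discrepancy between the two foliations (the tilt of $\olnu$ away from $\olN$ and the motion of the cylinder base $\gamma_t$) and verifying that, after the locally determined pieces $u_2=\tfrac12\kappa$ etc.\ are subtracted, no lower-order data leaks into the $x^3$ coefficient. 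These are precisely the detailed near-boundary (trigonometric) computations, carried out with the frame and second-fundamental-form formul\ae\ of \S\ref{vms}.

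The remaining terms must then be collected and shown not to contribute. Writing the transport of the arclength measure as $\tfrac{d}{dt}d\overline{s}_t = -\kappa\,\dot{\psi}\,d\overline{s}$, these assemble into $-3\int_{\gamma_0}\bigl(\tfrac{D\beta}{Dt} - \kappa\dot{\psi}^2\bigr)u_3\, d\overline{s}$, a quantity linear in the boundary \emph{acceleration} of the family. This is the main obstacle, and its treatment depends on the intended reading of $D^2\calA$. At a critical point one has $u_3\equiv 0$ by Corollary \ref{co:crit}, so the entire bracket drops out and only $-3\int_{\gamma_0}\dot{\phi}_0\dot{\phi}_3\,d\overline{s}$ survives; this is the case in which the Hessian is intrinsically defined. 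Away from critical points I would realize the Hessian by the canonical zero-acceleration representative $\phi_t=t\dot{\phi}$ (the straight line in the normal-graph chart), for which the boundary acceleration vanishes and the spurious bracket again disappears — the price being that one then computes the second $t$-derivative directly rather than through Theorem \ref{theformula}, since the intermediate surfaces are no longer minimal. Either way the output is the stated formula.

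Finally, for the ``equivalently'' clause I would identify $\dot{\phi}_3 = \calD_{Y_0}\dot{\phi}_0$, where $\calD_{Y_0}$ solves $L_{Y_0}\dot{\phi}=0$ with leading data $\dot{\phi}_0$ (uniquely, when $Y_0$ is nondegenerate, by Proposition \ref{pr:eeo}) and returns the $x^3$ coefficient, and then prove that $\calD_{Y_0}$ is self-adjoint. This follows from Green's identity applied to the Jacobi operator: for two Jacobi fields $\dot{\phi},\dot{\phi}'$ the integral $\int_{Y_\e}(\dot{\phi}'L_{Y_0}\dot{\phi}-\dot{\phi}L_{Y_0}\dot{\phi}')\,dA$ vanishes, and inserting the expansions $\dot{\phi}\sim\dot{\phi}_0+\dot{\phi}_2 x^2+\dot{\phi}_3 x^3+\ldots$ into the resulting boundary term and letting $\e\searrow 0$ yields $\int_{\gamma_0}\dot{\phi}_0\dot{\phi}'_3\,d\overline{s}=\int_{\gamma_0}\dot{\phi}'_0\dot{\phi}_3\,d\overline{s}$. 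Hence $-3\int_{\gamma_0}\dot{\phi}_0\calD_{Y_0}\dot{\phi}_0\,d\overline{s}$ is a symmetric bilinear form, which is exactly the assertion that the Hessian of $\calA$ is represented by this Dirichlet-to-Neumann operator.
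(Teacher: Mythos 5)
Your plan follows the paper's proof almost line for line: differentiate the first variation formula of Theorem \ref{theformula} along the family in $\calM_k(M)$, isolate $\left.\tfrac{d}{dt}\right|_{t=0}u_{3,t}$, discard the acceleration terms as parametrization artifacts, and convert $\dot{\phi}_3=\calD_{Y_0}\dot{\phi}_0$ in the nondegenerate case. You have also correctly located the crux in the identity $\left.\tfrac{d}{dt}\right|_{t=0}u_{3,t}=\dot{\phi}_3$, and correctly diagnosed why it is delicate: $u_{3,t}$ is read off from the graph of $Y_t$ over its own moving cylinder $\Gamma_{\gamma_t}$, while $\dot{\phi}$ lives on the fixed normal graph over $Y_0$.

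The gap is that this identity is precisely where the paper's entire effort goes, and your proposal only gestures at it. The paper decomposes the normal-graph function as $\phi_t=\tilde{\ell}+\tilde{\chi}$ (displacement from $Y_0$ to the moving cylinder, then from the cylinder to $Y_t$) and, via law-of-sines estimates on two thin triangles, computes $\left.\del_t\del_x^3\tilde{\ell}\right|_{x=t=0}=18\kappa_0 u_{3,0}\dot{\phi}_0$ and $\left.\del_t\del_x^3\tilde{\chi}\right|_{x=t=0}=6\dot{u}_{3,0}-18\kappa_0 u_{3,0}\dot{\psi}$; your assertion that ``no lower-order data leaks into the $x^3$ coefficient'' is exactly the cancellation of these $\kappa_0 u_{3,0}\dot{\psi}$ contributions between the two legs, which is not automatic and must be computed --- until it is, your key identity is a conjecture, not a lemma. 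Two smaller points. First, the paper handles the acceleration more simply than your detour through the straight-line family $\phi_t=t\dot{\phi}$: it keeps the family inside $\calM_k(M)$, so the first variation formula applies at every $t$, and then drops the term $-3\int u_{3,0}\ddot{\phi}_0$ by the usual chain-rule convention defining a Hessian in a chart. Your alternative exits $\calM_k(M)$ and, as you concede, forfeits the very first-variation formula the argument rests on; note also that inside $\calM_k(M)$ the acceleration is constrained by minimality, so a zero-acceleration representative need not exist there, and your residual bracket is not purely linear in the acceleration (the $\kappa\dot{\psi}^2 u_3$ term is quadratic in the velocity), though every such term carries a factor of $u_{3,0}$ and is discarded on the same grounds. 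Second, your Green's identity argument for the symmetry of $\calD_{Y_0}$ is correct and is a genuine addition: the paper obtains the ``equivalently'' clause directly from the definition of $\calD_{Y_0}$ and does not verify symmetry within this proof.
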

\begin{proof} Let us begin by setting up some notation. Write each $Y_t$ as a 
horizontal graph over the vertical cylinder $\Gamma_t$ on the curve $\gamma_t$, i.e.\ via 
$(\gamma_t(s),x) + u_t(s,x)\olN_t$.  Through the remainder of this proof, subscripts refer only 
to $t$-dependence but not derivatives, and we often write $\dot{w}$ for $\del_t w$. Set
\[
u_t(s,x) = \frac12 \kappa_t(s) x^2  + u_{3,t}(s) x^3 + \calO(x^{3+\alpha}).
\]
The unit tangent and normal vectors of $\gamma_t$ are
\[
\overline{T}_t = \frac{1}{\sqrt{(\del_s \phi_t)^2 + (1-\kappa_0 \phi_t)^2}}
\left((1 - \kappa_0 \phi_t)\overline{T}_0 +  \del_s \phi_t \olN_0\right),
\]
\[
\olN_t = \frac{1}{\sqrt{(\del_s \phi_t)^2 + (1-\kappa_0 \phi_t)^2}}\left(
-\del_s \phi_t \overline{T}_0 + (1-\kappa_0 \phi_t) \olN_0\right),
\] 
so the component along $\olN_t$ of the variation vector field $\dot{\phi}\, \olN_0$ is 
\[
\langle \olN_t, \dot{\phi} \olN_0 \rangle = \dot{\phi}_t \frac{1-\kappa_0 \phi_t}{\sqrt{(\del_s \phi_t)^2 + 
(1-\kappa_0 \phi_t)^2}} = \dot{\phi}_t \left(1 + (\del_s \phi_t/(1 - \kappa_0 \phi_t)^2 \right)^{-1/2}.
\]
This is equal to $\dot{\phi}_t (1 + \calO(t^2) )$. Hence the first variation of $\calA$ at $Y_t$ is given by 
\[
\frac{d\,}{dt} \calA(Y_t)  = -3 \int_{\gamma_t} u_{3,t} \dot{\phi}_t (1 + \calO(t^2) )\, d\bar{s}.
\]
Differentiating once more at $t=0$ yields
\[
\left. \frac{d^2\,}{dt^2} \calA(Y_t) \right|_{t=0} = 
-3 \int_{\gamma_0} \left( \dot{u}_{3,0} \, \dot{\phi}_0 + u_{3,0} \ddot{\phi}_0 \right)\, d\bar{s},
\]
The second term in the integrand is an artifact of the parametrization. Indeed, recall that for
the composition of any two functions, $A(s)$ and $s = Y(t)$, $(A(Y(t))'' = A''(Y_0)(Y')^2 + A'(Y_0)Y''$, 
but the Hessian of $A$ corresponds only to the first term on the right. In other words, 
\begin{equation}
\left. D^2 \calA\right|_{Y_0}(\dot{\phi},\dot{\phi}) = -3 \int_{\gamma_0} \dot{u}_{3,0} \dot{\phi}_0 \, d\bar{s}.
\label{eq:2ndderiv}
\end{equation}

The key point is to relate $\dot{\phi}_3$ to $\dot{u}_{3,0}$. The long calculation that follows involves expressing 
$\phi_t$ in terms of $u_t$ and then calculating $\del_t \del_x^3$ at $(x,t) = (0,0)$. The relationship between these 
two terms arises as follows. Fix any $P \in Y_0$ with $x$ coordinate $x(P) = x_0$ sufficiently small. Let $\tilde{\lambda}$ 
be the line emanating from $P \in Y_0$ in the direction $\olnu(P)$. Denote by $\tilde{P}$ and $\tilde{R}$ its points of 
intersection with $\Gamma_t$ and $Y_t$, respectively. Note that assuming $\gamma_0$ is convex near $P$, then moving 
from $P$ along the line $\tilde{\lambda}$, one encounters first $\tilde{P}$ then $\tilde{R}$; we orient coordinates 
so that all quantities are positive in this situation. By definition, $\phi_t(P)$ is the distance from $Y_0$ to $Y_t$ 
along $\tilde{\lambda}$, and it can be decomposed as the sum of two signed distances, from $P$ to $\tilde{P}$ and
from $\tilde{P}$ to $\tilde{R}$, which we call $\tilde{\ell}$ and $\tilde{\chi}$, respectively:
\[
\phi_t = |P\tilde{R}| = |P\tilde{P}| + |\tilde{P}\tilde{R}| = \tilde{\ell} + \tilde{\chi}.
\]
We show how to relate each of the two quantities $\tilde{\ell}$ and $\tilde{\chi}$ to $u_t$ in turn.

First consider $\tilde{\ell}$. Let $x_0$ be the $x$ coordinate of the point $P$ and $\alpha$ the angle between 
$\tilde{\lambda}$ and the plane $x=x_0$. From (\ref{eq:defnu}) the orthogonal projection of $\olnu(P)$ onto this plane 
is spanned by $-\del_s u \, \overline{T}_0 + (1 - \kappa_0 u_0) \olN_0$, and 
\[
\sin \alpha = \frac{\del_x u \, (1-\kappa_0 u_0)}{\sqrt{(\del_s u)^2 + (1-\kappa_0 u_0)^2 (1 + (\del_x u_0)^2)}}
\quad
\Longrightarrow  \quad \alpha = \kappa_0 x + 3 u_{3,0} x^2 + \calO(x^3).
\]
Now let $\lambda^1$ denote the line in the plane $x=x_0$ from $P$ in the direction of this projection 
of $\olnu$, and $P^1 = \lambda^1 \cap \Gamma_t$. Setting $\ell^1 = |PP^1|$ (interpreted as a signed distance), 
since $PP^1 \tilde{P}$ is a right triangle, we have 
\[
\tilde{\ell} = \frac{\ell^1}{\cos \alpha} = \ell^1(1 + \frac12 \alpha^2 + \ldots) 
= \ell^1 (1 + \frac12 \kappa_0^2 x^2 + 3 \kappa_0 u_{3,0} x^3 + \ldots).
\]
We shall prove below that
\begin{equation}
\left.\ell^1\right|_{x=0} = \psi_t, \quad \left.\del_x \ell^1\right|_{x=0} = 0, 
\quad \mbox{and}\qquad \left.\del_x^3 \ell^1\right|_{x=0} = 6u_{3,0} + \calO(t^2),
\label{eq:derivs}
\end{equation}
and hence, after some calculation, 
\begin{equation}
\left. \del_t \del_x^3 \tilde{\ell}\right|_{x=t=0} = 18 \kappa_0 u_{3,0} \dot{\phi}_0.
\label{eq:derivltil}
\end{equation}
To prove (\ref{eq:derivs}), we examine $\ell^1$ more closely. Consider the (very thin) triangle $T$ in the plane $x=x_0$ with 
sides the segments of the line $\lambda^1$, the line $\bar{\lambda}$ emanating from $P$ with direction $\olN_0$, 
and the line tangent to $\gamma_t$ at the point of intersection $\bar{\lambda}\cap \gamma_t$. Let $\ell^2$, 
$\bar{\ell}$ and $\hat{\ell}$ denote the lengths of these sides, respectively, and let $\beta$ and $\omega$ denote the angles 
between $\lambda^1 \cap \bar{\lambda}$ and $\bar{\lambda} \cap \gamma_t$. Note that
\[
\begin{array}{rcl}
\cos \beta & = & \frac{1}{\sqrt{(\del_s u_0)^2 + (1 - \kappa_0 u_0)^2}}
\langle -\del_s u \overline{T}_0 + (1 - \kappa_0 u_0) \olN_0, \olN_0 \rangle \\
& = & \left(1 + ( \del_s u_0/(1-\kappa_0 u_0))^2\right)^{-1/2} = 1 - \frac12 ( \frac12 \del_s \kappa_0 \, x^2 
+ \del_s u_{3,0} \, x^3 + \ldots)^2 + \ldots 
\end{array}
\]
\begin{equation}
\Longrightarrow  \beta = \frac12 \del_s \kappa_0 \, x^2 + \del_s u_{3,0} \, x^3 + \ldots.
\label{eq:beta}
\end{equation}
In addition, since $\pi/2 - \omega$ is the angle between $\olN_0$ and $\olN_t$, 
\[
\cos (\frac{\pi}2 - \omega) = 1 - \frac12(\del_s \phi_t + \calO(t^2))^2 + \ldots \Rightarrow
\omega = \frac{\pi}2 - \del_s \phi_t + \calO(t^2).
\]

The law of sines for this triangle is the set of equalities
\[
\frac{\hat{\ell}}{\sin \beta} = \frac{\ell^2}{\sin \omega} = \frac{\bar{\ell}}{\sin(\pi-\beta-\omega)}.
\]
Since $\omega$ is bounded away from $0$, the first equality and (\ref{eq:beta}) imply that $\hat{\ell} = \calO(x^2)$, 
which in turn shows that $\ell^2 = \ell^1 + \calO(x^4)$. Thus for computing third derivatives, we may as well work 
with $\ell^2$. The second equality yields
\[
\ell^2 = \bar{\ell} \, \frac{\sin \omega}{\sin(\pi-\beta - \omega)} = \frac{\bar{\ell}}{\cos\beta}\left(
1 - \tan \beta \cot \omega + \calO(x^4)\right). 
\]
Recalling that $\bar{\ell} = u_0 + \psi_t$, and using the expressions for $\beta$ and $\omega$ above, we obtain
\[
\begin{array}{rcl}
\ell^2 & = & \left(\psi_t + u_0\right) \left(1 + \calO(x^4)\right)\left(
1 - (\frac12 \del_s \kappa_0 x^2 + \ldots)(\del_s \phi_t + \calO(t^2)\right) \\
& = & \psi_t + \frac12 \kappa_0 x^2 + u_{3,0} x^3 + \calO(t^2) + \calO(x^4),
\end{array}
\]
which yields (\ref{eq:derivs}).

We turn to the computation of $\tilde{\chi}$. The idea is much the same.  Let $(s_0,x_0)$ denote
the cylindrical coordinates of $P$, i.e.\ as a horizontal graph over the cylinder $\Gamma_0$, and
$(s_0',x_0')$ the cylindrical coordinates of the point $\tilde{R} = \tilde{\lambda} \cap Y_t$, when 
written as a horizontal graph over $\Gamma_t$. Since $\tilde{R} = P + \phi_t(s,x) \olnu$ and 
$\olnu = \olN_0 - \kappa_0 x \del_x + \calO(x^2)$, we obtain
\begin{equation}
s_0' = s_0 + \calO(x^2), \qquad x_0' = x_0 - \kappa_0 x \phi_t + \calO(x^2).
\label{eq:newcoords}
\end{equation}
We shall compute the horizontal displacement $u_t(s_0',x_0')$ of $Y_t$ from $\Gamma_t$
in the plane $x = x_0'$. For this, let $\tilde{\gamma}_t = \Gamma_t \cap \{x = x_0'\}$, which is just
the vertical translate of $\gamma_t$ into this plane. Abusing notation slightly, write $\tilde{P}$ for
the point on $\tilde{\gamma}_t$ which is directly below the intersection of $\tilde{\lambda} \cap \Gamma_t$ 
(which had been called $\tilde{P}$ above). Finally, let $\tilde{Q}$ denote the point on $\tilde{\gamma}_t$ 
which is nearest to $\tilde{R}$, i.e.\ so that $\tilde{R} = \tilde{Q} + c \olN_t$ for some constant $c$.

Consider the thin triangle $\tilde{P}\tilde{Q}\tilde{R}$. The sides $\tilde{P}\tilde{R}$ and $\tilde{R}\tilde{Q}$ have 
lengths $\tilde{\chi} \cos\alpha $ and $u_t(s_0',x_0')$, respectively. Let $\eta$ and $\zeta$ be the angles at the vertices 
$\tilde{P}$ and $\tilde{Q}$. We find that
\[
\eta = \frac{\pi}{2} + \calO(t) + \calO(x^2), \qquad \zeta = \frac{\pi}{2} + \calO(x^2).
\]
By the law of sines again, $u_t(s_0',x_0')/\sin \eta = \tilde{\chi}/\sin \zeta$, so 
\[
\tilde{\chi} \cos\alpha  = u_t(s_0',x_0') (1 + \calO(x^4))(1 + \calO((t + x^2)^2)) = u_t(s_0',x_0') + \calO(t^2) + \calO(x^4).
\]
Expanding in a Taylor series, using (\ref{eq:newcoords}) and recalling that $u_t = \calO(x^2)$, we find that
\[
\begin{array}{rcl}
u_t(s_0',x_0') & = &  u_t(s_0,x_0) + \del_s u_t (s_0,x_0) \calO(x^2) + \del_x u_t(s_0,x_0) (-\kappa_0 x \phi_t + \calO(x^2)) +
\calO(t^2 + x^4)  \\
& = & u_t(s_0,x_0) - \kappa_0 \kappa_t \phi_t x^2 - 3 \kappa_0 u_{3,t} \phi_t x^3 + \calO(t^2 + x^4) \\
& = & (\frac12 \kappa_t - \kappa_0 \kappa_t \phi_t) x^2 + (u_{3,t} - 3 \kappa_0 u_{3,t} \phi_t )x^3 + \ldots;
\end{array}
\]
also, as before, $1/\cos \alpha = 1 + \calO(x^2)$, so altogether we obtain
\[
\left. \del_t \del_x^3 \tilde{\chi} \right|_{t=x=0} = 6 \dot{u}_{3,0} - 18 \kappa_0 u_{3,0} \dot{\psi}
\]

We have now proved that 
\[
\left. \del_t \del_x^3 \phi_t \right|_{x=t=0} = \left. \del_t \del_x^3 (\tilde{\ell} + \tilde{\chi}) \right|_{x=t=0} = 
6 \calD_{Y_0}(\dot{\phi}_0) = 6 \dot{u}_{3,0}+18 \kappa_0 u_{3,0} \dot{\psi}.
\]
Inserting this into (\ref{eq:2ndderiv}) yields the formul\ae\ in the statement of the theorem.
\end{proof}

\section{Critical points of $\calA$ in $\HH^3$}
In this section we prove that the only nondegenerate critical points of renormalized area for proper
minimal surfaces in all of $\HH^3$ are the totally geodesic planes, the boundary curves of which are
circles. The proof requires a preliminary geometric lemma about osculating circles of plane curves,
which is perhaps of independent interest, and then proceeds via a refined version of the asymptotic
maximum principle.

Recall that the osculating circle $C$ at a point $p \in \gamma$ is a circle which
makes second order contact with $\gamma$ at that point; its curvature, the inverse
of its radius, is therefore the same as that for $\gamma$ at this point of intersection.
By inscribed we simply mean that $C$ remains entirely within the closure of one of the
two components of $S^2 \setminus \gamma$.

\begin{proposition}
\label{oscullating} Let $\gamma$ be a $\calC^2$ embedded loop in $S^2= \del \HH^3$
and $\Omega$ one component of $S^2 \setminus \gamma$. Then there exists a point
$p \in \gamma$ so that the osculating curve of $\gamma$ at $p$ lies in
$\overline{\Omega}$.
\end{proposition}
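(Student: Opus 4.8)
The plan is to reduce the spherical statement to a planar one, exploiting the fact that the osculating circle is a conformally natural object. Indeed, a M\"obius transformation of $S^2$ is a diffeomorphism carrying round circles to round circles and preserving the order of contact between two curves; it therefore carries the osculating circle of $\gamma$ at $p$ (the unique round circle having second order contact there, where $\kappa\neq 0$) to the osculating circle of the image curve at the image point. Choosing a point $q$ in the component of $S^2\setminus\gamma$ complementary to $\Omega$ and projecting stereographically from $q$, the curve $\gamma$ becomes a $\calC^2$ embedded Jordan curve $\tilde\gamma\subset\RR^2$, the chosen component $\Omega$ (not containing $q$, hence mapping to a bounded set) becomes the bounded Jordan domain $\tilde\Omega$ it encloses, and the claim reduces to the purely planar assertion that a bounded $\calC^2$ Jordan domain possesses a boundary point whose osculating circle bounds a disk contained in $\overline{\tilde\Omega}$.

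For the planar problem I would orient $\tilde\gamma$ so that its inward unit normal $n$ points into $\tilde\Omega$ and let $\kappa$ denote the corresponding signed curvature. Since the total curvature of a simple closed curve is $2\pi$, the maximum $\kappa_0:=\max\kappa$ is strictly positive; let it be attained at $p_0$, and let $D_0$ be the osculating disk there, that is, the disk of radius $R_0=1/\kappa_0$ internally tangent to $\tilde\gamma$ at $p_0$ with center $c_0=p_0+R_0\,n(p_0)$ on the inward side. The assertion to prove is that $D_0\subseteq\overline{\tilde\Omega}$. The easy half is local: writing $\phi(s)=|\gamma(s)-c_0|^2$ in arclength, osculation forces $\phi-R_0^2$ to vanish to third order at $p_0$, while a short Frenet computation gives $\phi^{(4)}(s_0)=-2R_0\,\kappa''(s_0)\ge 0$ because $p_0$ is a maximum of $\kappa$; hence $\tilde\gamma$ stays on or outside $\partial D_0$ in a neighbourhood of $p_0$.

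The substance of the argument, and the step I expect to be the main obstacle, is the \emph{global} containment: ruling out that some arc of $\tilde\gamma$ far from $p_0$ dips into the open disk $D_0$. Here the hypothesis that $R_0$ is the \emph{minimal} radius of curvature of $\tilde\gamma$ is essential: intuitively, a curve internally tangent to a radius-$R_0$ disk that entered it would have to turn back with radius of curvature strictly less than $R_0$, i.e.\ with curvature exceeding $\kappa_0$, contradicting maximality. This is the mechanism underlying Blaschke's rolling theorem, whose proof I would adapt to the present setting. The delicate point is that $\tilde\Omega$ need not be convex, so classical rolling theorems do not apply verbatim and, moreover, the naive pointwise test at the deepest point $s_1$ of intrusion is inconclusive (there one only extracts $\kappa(s_1)\le 1/|\gamma(s_1)-c_0|$, which is consistent with $\kappa\le\kappa_0$). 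One must instead run a global first-contact (or winding) argument—shrinking, or rolling, the disk while tracking the angle of the tangent of $\tilde\gamma$ relative to the radial direction about $c_0$—to locate the forced point of excessive curvature. Once $D_0\subseteq\overline{\tilde\Omega}$ is established, undoing the stereographic projection returns an osculating circle of $\gamma$ contained in $\overline{\Omega}$, as required.
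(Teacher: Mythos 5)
Your reduction to the plane via stereographic projection from the complementary component is fine (and matches the paper's use of M\"obius invariance), but the planar lemma on which everything then rests --- that the osculating disk $D_0$ at the point of \emph{maximum} signed curvature is contained in $\overline{\tilde\Omega}$ --- is false for non-convex Jordan domains, so the step you yourself flagged as the main obstacle is not merely delicate but insurmountable. Concretely: let $\tilde\Omega$ be two large disks of radius $\rho$ joined by a thin corridor of width $w$ that makes a hairpin ($180^\circ$) turn around a center $c$, with inner radius $r$ and outer radius $r+w$, all corners smoothed to $\calC^2$. The signed curvature (inward normal) equals $1/\rho$ on the blobs, $1/(r+w)$ on the outer arc of the turn, $-1/r$ on the inner arc, is nonpositive at the smoothed concave junctions, and can be kept in $[0,1/(r+w)]$ on the transition arcs. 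Choosing $\rho>r+w$, the maximum $\kappa_0=1/(r+w)$ is attained at the apex $p_0$ of the outer arc, and the osculating circle there is exactly the circle of radius $r+w$ centered at $c$. But the full disk $|z-c|\le r+w$ contains the region $|z-c|<r$ inside the U, which belongs to the \emph{complement} (it is connected to the unbounded component through the gap between the two legs of the hairpin). So $D_0\not\subseteq\overline{\tilde\Omega}$, and no rolling, winding, or first-contact refinement can rescue the argument, because the conclusion it aims at is simply wrong; the classical fact you are invoking (the osculating circle at the maximal vertex is enclosed) is a theorem about \emph{convex} curves, via Kneser-type nesting, and convexity is essential. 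Note that the proposition is still true in this example --- the osculating circle at any blob point is the blob circle itself, which is inscribed --- which shows the correct proof cannot single out the max-curvature point.

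The paper's proof uses a genuinely different mechanism that sidesteps this. It considers the family of \emph{inscribed} circles: a locally maximal inscribed circle is either tangent at a single point, in which case it must osculate there (otherwise its radius could be increased), or tangent at two or more points. In the latter case one looks at the set $\calK$ of bitangency pairs $(P,Q)$ and shows its closure must meet the diagonal: if not, a pair minimizing the parameter separation exists, and after a M\"obius transformation sending the corresponding circle to a line one produces a bitangent inscribed circle with strictly smaller separation, a contradiction; finally, a limit of inscribed circles whose two tangency points merge is shown to be an inscribed osculating circle by writing the curves as graphs over the line and applying the intermediate value theorem to the second derivative. Note also that this limiting argument uses only $\calC^2$ regularity, whereas your local computation at $p_0$ invokes $\kappa''(s_0)$, which does not exist for a curve that is merely $\calC^2$; that is a secondary issue, but it points the same way: the containment you want must be produced by a global variational/limiting argument over inscribed circles, not read off from the curvature maximum.
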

\begin{remark}
As the proof makes clear, there may be many such points. Of
course, we could equally well have replaced $\Omega$ by the other
component of $S^2 \setminus \gamma$.
\end{remark}
\begin{proof}
We begin with a few elementary observations. First, the entire question is invariant under
M\"obius transformations, hence we may freely apply such transformations to reduce the problem
to one that is easier to visualize. Second, if $C'$ is any circle inscribed in $\Omega$ which
is locally maximal (in the sense that for any continuous family of inscribed circles $C'(\e)$
with $C'(0) = C'$, the family of radii $r'(\e)$ reaches a local maximum at $\e=0$) then necessarily
either $C'$ is tangent to $\gamma$ at two or more distinct points, or else $C'$ is tangent to
$\gamma$ at a single point and is the osculating circle there.  The reason is that if there is
only one point of contact, $p = C' \cap \gamma$, then the curvature of the circle $1/r'$ is
greater than or equal to $\kappa(p)$, the curvature of $\gamma$ at $p$. If this inequality is
strict and there are no other points of contact, 
then we could increase the radius of $C'$ slightly while keeping it inside $\Omega$.

Thirdly, and slightly more complicated, we claim that if $C'$ arises as a limit of inscribed
circles $C'_j$ such that each $C'_j \cap \gamma$ contains at least two points $P_j \neq Q_j$
and $\mbox{dist}\,(P_j ,Q_j) \to 0$ as $j \to \infty$, then the limit $C'$ is necessarily an
inscribed osculating circle. To see this, choose for each $j$ a M\"obius transformation $F_j$ which
carries $C_j'$ into a fixed straight line in $\RR^2$, say the $y_1$-axis. We also suppose that
$\mbox{dist}\,(F_j(P_j),F_j(Q_j))= \mbox{dist}\,(P_j,Q_j)$, so that $F_j$ does not diverge,
that $F_j$ carries $\Omega$ to the lower half-plane, and that $\lim F_j(P_j) = \lim F_j(Q_j)$ is
the origin. Each curve $F_j(\gamma)$ lies in the upper half-plane and is tangent to the $y_1$-axis
at two points which are converging to the origin. Clearly, the curve must be a graph over the axis
between these two points for $j$ large enough, say of some function $f_j$. By the intermediate value
theorem, there is a sequence of points $t_j \to 0$ such that $f_j''(t_j) = 0$. Taking a limit, we
see that the limiting curve is flat to second order at the origin and lies entirely in the closed
upper half-plane, which proves the claim.

We can now proceed with the proof. Let $\calK$ denote the set of pairs $(P,Q) \in \gamma \times \gamma$,
$P \neq Q$ such that there is an inscribed circle which is tangent to $\gamma$ at these two points
(and possibly other points as well).  By the second remark above, if this set were empty, then there
would have to be an inscribed osculating circle already. So assume $\calK \neq \emptyset$. We claim
that the closure of $\calK$ must intersect the diagonal, which by the third remark above would produce
an inscribed osculating circle: If it did not, then $\calK$ would be compact in $\gamma \times \gamma$,
and hence there would be a point $(P',Q') = (\gamma(t_1),\gamma(t_2))$ such that the difference in parameter
values $|t_2-t_1|$ is minimal (for some fixed parametrization of the curve). Let $C'$ be the corresponding
circle. Conformally transform so that $C'$ is the $y_1$-axis. Then the images of $P'$ and $Q'$ lie on this
axis and the transformed curve lies entirely on or above the axis. If the image of $P'$ is the left-most
point of tangency of the curve with the $y_1$-axis, fix another point $P''$ just to the right for which
the horizontal component of the downward pointing normal is positive. There is a maximal radius for which
a circle tangent to the curve at $P''$ remains in the component of the lower half-plane, and this circle
is obviously tangent to the curve at another point $Q''$ to the left of the image of $Q'$. This shows the
existence of another pair $(P'',Q'')$ strictly between the pair $(P',Q')$ for which there is an inscribed
circle tangent at these two points, which is a contradiction. This finishes the proof.
\end{proof}

Now we turn to the main result of this section.

\begin{theorem}
Let $Y \in \calM_k(\HH^3)$ be nondegenerate and suppose that $\del Y = \gamma$
is $\calC^{3,\alpha}$ and connected, and $Y$ is a critical point for $\calA$. Then $\del Y$ is a
round circle and $Y$ is a totally geodesic disk.
\end{theorem}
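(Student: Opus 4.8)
The plan is to argue by contradiction, assuming that $Y$ is a nondegenerate critical point of $\calA$ which is \emph{not} totally geodesic, and producing a point of $\gamma$ where the third-order graph coefficient $u_3$ is nonzero. The entry point is Corollary \ref{co:crit}: since $Y$ is nondegenerate and critical, its boundary coefficient $u_3$ vanishes identically along $\gamma$, so it suffices to show that $u_3 \equiv 0$ forces $Y$ to be totally geodesic. To build a comparison surface I would apply Proposition \ref{oscullating} to $\gamma \subset S^2 = \del\HH^3$ and a component $\Omega$ of $S^2\setminus\gamma$, obtaining a point $p \in \gamma$ whose osculating circle $C$ is inscribed in $\overline\Omega$ and is tangent to $\gamma$ at $p$ with equal curvature. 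Let $P$ be the totally geodesic copy of $\HH^2$ (a round hemisphere in the upper half-space model) with $\del P = C$; a direct computation as in the side remark of \S\ref{vms} gives $u^{P} = \tfrac12\kappa_C\, x^2 + \calO(x^4)$ over its own vertical cylinder, so that $u_3^{P}\equiv 0$, consistent with $P$ being totally geodesic.

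The core of the argument is the comparison of $Y$ and $P$ near $p$. Because $C$ is inscribed in $\overline\Omega$ and meets $\gamma$ only tangentially (the contact at $p$ being osculating), a standard maximum-principle comparison, in the spirit of the barrier arguments of Propositions \ref{tonegreg} and \ref{pr:proper}, shows that $Y$ and $P$ can share no interior point: a first point of interior contact would be an interior tangency, and the strong maximum principle for minimal surfaces would give $Y \equiv P$ and hence $\gamma = C$, contrary to $Y$ not being totally geodesic. Thus $Y$ lies strictly to one side of $P$ in the interior, the two surfaces being tangent \emph{at infinity} along the single asymptotic contact point $p$: they share $p$, the vertical tangent plane there, and—since $C$ osculates $\gamma$—the same second-order coefficient $u_2 = \tfrac12\kappa(p)$.

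The decisive step is a refined asymptotic maximum principle adapted to this tangential contact. Writing both surfaces as horizontal graphs over the common vertical cylinder $\Gamma_C$ and setting $w = u^{Y} - u^{P}$, one checks that $w$ has a fixed sign near $p$ and solves a linear uniformly degenerate elliptic equation of Jacobi type, namely the linearization of (\ref{eq:mse}) along the segment joining $u^{Y}$ and $u^{P}$, whose indicial roots are $-1$ and $2$ exactly as in (\ref{eq:Jacws}). If $w \equiv 0$ near $p$, then $Y$ and $P$ share Cauchy data there and the uniqueness statement of \S\ref{vms} (via \cite{Ma-uc}) gives $Y \equiv P$, a contradiction. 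Otherwise, since $Y$ and $P$ agree through order $x^2$ at $p$ but $w \not\equiv 0$, a Hopf-type boundary lemma in this degenerate setting forces the leading surviving coefficient—precisely the $x^3$ term $u_3^{Y}(p) - u_3^{P}(p) = u_3^{Y}(p)$—to be strictly nonzero, with sign dictated by the side on which $Y$ lies. Recalling from \S\ref{vms} that $u_3$ is the leading coefficient of the normal–normal second fundamental form, $k_{22}\sim -3u_3$, this is just the statement that the curvature of $Y$ at the asymptotic contact strictly dominates that of the totally geodesic $P$. Either way we contradict $u_3 \equiv 0$, so $Y$ must be totally geodesic, whence $\gamma = \del Y$ is a round circle and $Y$ is a totally geodesic disk.

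The main obstacle I anticipate is the rigorous form of this last step: establishing the Hopf/boundary maximum principle for the uniformly degenerate Jacobi operator at a point of \emph{tangential} asymptotic contact, where the ordinary asymptotic maximum principle (which only distinguishes $x^{-1}$ from $x^2$ asymptotics) is too coarse. One must combine the indicial analysis underlying Proposition \ref{pr:eeo} with the fixed sign of $w$ to extract the strict positivity (or negativity) of the $x^3$ coefficient; the delicate point is controlling the tangential ($s$-)dependence of $w$ near $p$, given that $\gamma$ and $C$ already separate at third order in $s$ there.
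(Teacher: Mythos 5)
Your reduction of the theorem is exactly the paper's: Corollary \ref{co:crit} gives $u_3 \equiv 0$, Proposition \ref{oscullating} supplies a point $p$ of inscribed osculating contact, and the comparison object is the totally geodesic plane over the osculating circle, with one-sidedness of $Y$ coming ultimately from the convex hull property of $\gamma$ (the paper implements this after a M\"obius transformation sending the osculating circle to a straight line, so that near $p$ the surface becomes a \emph{nonnegative} horizontal graph $u(s,x)$ over the vertical plane, solving the flat-cylinder equation (\ref{pde}) with $a(0)=0$ and no $x^3$ term). Up to this point your outline matches the actual proof, and your subsidiary computations (the hemisphere has $u^P = \tfrac12\kappa_C x^2 + \calO(x^4)$, and $k_{22}\sim -3u_3$) are correct.

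The genuine gap is the step you yourself flag: the ``Hopf-type boundary lemma at a point of tangential asymptotic contact'' is not a citable fact --- it is the entire content of the theorem's hard step, and it does not follow from the strong maximum principle, nor from the indicial analysis behind Proposition \ref{pr:eeo}, which is a Fredholm mapping statement with no pointwise comparison content. The obstruction is exactly the one you identify: at $p$ the difference $w = u^Y - u^P$ vanishes \emph{beyond} the indicial rate, since the $x^2$ coefficients cancel (osculation) and the boundary values of $w$ vanish to third order in $s$; a standard boundary-point lemma would at best control the coefficient at the leading indicial rate, which is already zero here, and nothing generic rules out $w(0,x) = \calO(x^{3+\alpha})$ with the positivity of $w$ carried entirely by the tangential directions. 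The paper closes this by an explicit super-indicial barrier rather than a Hopf lemma: fix $\beta \in (0,\alpha)$ and compare $u$ with $c\,x^{3+\beta}$ on a small box $\{|s|\le\delta,\ 0\le x\le \delta\}$. Because $3+\beta$ lies strictly above the indicial root $3$ of the linearization of (\ref{pde}) in the graph normalization, one gets $\calF(u - c x^{3+\beta}) \le -c\beta(3+\beta)x^{1+\beta} + Kc\,x^{2+\beta} < 0$ on the box for $\delta$ small (using scale-invariant Schauder bounds $x^{-2}|u| + x^{-1}|\nabla u| + |\nabla^2 u| \le K$), while $u \ge 0$, $a(0)=0$ and $u_3\equiv 0$ give $u(0,x) = \calO(x^{3+\alpha}) < c\,x^{3+\beta}$, forcing an interior negative minimum where necessarily $\calF(u_c) \ge 0$ --- a contradiction. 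So your scheme is the right one, but to complete it you must replace the invoked lemma by this subsolution comparison (or prove the lemma by the same computation); ``combining Proposition \ref{pr:eeo} with the fixed sign of $w$'' will not produce it. A minor further point: your one-sidedness argument via a ``first point of interior contact'' presupposes a side to be on, which should be extracted first from the convex hull property ($Y$ avoids the open half-ball over the disk bounded by the osculating circle), after which the strong maximum principle handles interior tangencies as you say.
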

\begin{remark}
Using Corollary \ref{co:crit}, the proof below actually shows that if $Y$
is critical for the extended renormalized area functional $\calR$, then
the same conclusion holds.
\end{remark}

\begin{proof}
First, by the results in the previous section, we know that $Y$
must be a minimal surface and also that the formally undetermined
term $u_3$ in its expansion must vanish.

\par Using the result about osculating circles, and applying a
conformal transformation, we reduce to the case where $\gamma$ is
a closed curve in $\RR^2$ which lies entirely in the closed upper
half-plane, and which is tangent to second order to the $y_1$-axis
at the origin.  We now write some neighbourhood of $0 \in Y$ as a
horizontal graph over the $(y_1,x)$-plane, i.e.\ $Y= \{y_2 =
u(y_1,x)\}$ for $|y_1| < \delta$, $x < \delta$. Set $s = y_1$ for
simplicity. This function satisfies the minimal surface equation,
which in this coordinate system takes the form
\begin{equation}
\label{pde} \calF(u) = (1 + u_x^2) u_{ss} -  2u_s u_x u_{sx} + (1
+ u_s^2)u_{xx} - \frac{2(1+u_x^2+u_s^2)}{x} u_x =0.
\end{equation}
(Note that this is simply the specialization of (\ref{eq:mse}) when the base of the cylinder
$\Gamma$ is flat, so $\kappa = 0$ and $w = 1$.) Furthermore, since $Y$ lies to one side of the
$(s,x)$ plane, $u \geq 0$ everywhere, and it has the asymptotic expansion
\begin{equation}
u(s,x) \sim a(s) x^2 + \calO(x^{3+\alpha}).
\label{aeu}
\end{equation}
The absence of the $x^3$ term is because $Y$ is critical for $\calA$; furthermore, $a(0)$ is the one half the
curvature of $\gamma$ at $0$, and hence because it osculates the line there, $a(0) = 0$.

Now choose any $\beta \in (0,\alpha)$ and define $u_c = u - c x^{3+\beta}$ for $c > 0$. By the various
properties above, if we fix $\delta$ then choose $c$  sufficiently small, the function
$u_c \geq 0$ on all four sides of the rectangle $|s| \leq \delta$, $0 \leq x \leq \delta$.
However, using (\ref{aeu}) and the fact that $\beta < \alpha$, we also have
that $u_c(0,x) < 0$ for $x$ sufficiently small.  This means that the minimum
of $u_c$ is achieved somewhere strictly inside this rectangle, and of course at that point
$(s_c,x_c)$, we have $\calF(u_c)(s_c,x_c) \geq 0$.

On the other hand, we compute that
\[
\calF(u - c x^{3+\beta}) = \calF(u) - L_u (c x^{3+\beta}) + \calO(c x^{6 + 2\beta});
\]
the first term on the right vanishes, while the operator in the second term is the linearization
of $\calF$ at $u$,
\begin{eqnarray*}
L_u v & = & (1 + u_x^2) v_{ss} - 2 u_s u_x v_{sx} + (1+u_s^2)v_{xx} + 2(u_x u_{ss} - u_s u_{sx})v_x + v_s \\
& & + 2(u_s u_{xx} - u_x u_{sx})v_s - \frac{2(1+u_x^2+u_s^2)}{x} v_x -\frac{4u_x^2}{x}v_x-\frac{4u_su_x}{x}v_s.
\end{eqnarray*}
By (\ref{aeu}) and scale-invariant Schauder estimates, $x^{-2}|u| + x^{-1}|\nabla u| + |\nabla^2 u| \leq K$.
Hence altogether
\[
\calF(u_c) \leq - c \beta (3 + \beta) x^{1+\beta} + Kc x^{2+\beta},
\]
where $K$ is independent of $c$ and $\delta$. Choosing $\delta$ sufficiently small, we can ensure that
$\calF(u_c) < 0$ everywhere in this box, which contradicts that it is positive at $(s_c,x_c)$.
This proves the theorem.
\end{proof}

\section{Connections with the Willmore functional}
It is not particularly surprising that the functional $\calA$ is connected with the Willmore functional,
which by definition is the total integral of the square of mean curvature. In this final section we
explore some of these relationships.

Fixing a special bdf $x$, then the expansion  of $\olg = x^2 g$ has only even powers, so
its natural extension to a $\ZZ_2$-invariant metric on the double of $M$ across its boundary
is smooth; furthermore, any surface $Y \in \widetilde{M}_k(M)$ can also be doubled to
a closed $\calC^{2,1}$ surface. We denote these doubles by $2M$ and $2Y$, respectively.

\begin{proposition}
If $Y \in \calM_k(M)$, then
\[
\calA(Y) = - \frac12 \calW(2Y).
\]
\end{proposition}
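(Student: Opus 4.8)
The plan is to combine the intrinsic formula of Proposition \ref{malaki} with the Gauss equation and Gauss--Bonnet applied to the closed doubled surface $2Y \subset (2M,\olg)$. Since $M$ is hyperbolic its Weyl tensor vanishes, so Proposition \ref{malaki} reduces to
\[
\calA(Y) = -2\pi\chi(Y) - \tfrac12\int_Y |\whk|^2\, dA_g .
\]
It therefore suffices to prove $\calW(2Y)=4\pi\chi(Y)+\int_Y|\whk|^2\,dA_g$. Here one must read $\calW$ in its conformally natural form: for a closed surface $\Sigma$ in a Riemannian $3$-manifold set $\calW(\Sigma)=\int_\Sigma(\overline{H}^2+\overline{K})\,d\overline{A}$, where $\overline{H}$ is the mean curvature and $\overline{K}$ the ambient sectional curvature along $T\Sigma$. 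By the Gauss equation this integrand is conformally invariant, and in a (locally) conformally flat representative, where $\overline{K}\equiv 0$, it is literally the total integral of the square of the mean curvature; since $2M$ is locally conformally flat, this is the meaning of ``the square of mean curvature'' for us.

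First I would record the universal identity on any closed surface. The Gauss equation gives $K_\Sigma=\overline{K}+\det(\text{shape})$, and in dimension two $\det(\text{shape})=\overline{H}^2-\tfrac12|\whk|^2$. Integrating over $\Sigma$ and applying Gauss--Bonnet yields
\[
\calW(\Sigma)=\int_\Sigma(\overline{H}^2+\overline{K})\,d\overline{A}=2\pi\chi(\Sigma)+\tfrac12\int_\Sigma|\whk|^2\,d\overline{A};
\]
no curvature hypothesis on the ambient space is used, only the Gauss equation and Gauss--Bonnet. I would apply this to $\Sigma=2Y$ after checking the regularity: $2Y$ is a closed $\calC^{2,1}$ surface, so $\whk$ and $\overline{H}$ are defined and bounded and Gauss--Bonnet applies. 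Along the seam $\gamma$ one must verify that no singular (codimension-one) curvature contribution appears; this follows because $Y$ meets $\del M$ orthogonally, which forces $\overline{H}\to 0$ at $\gamma$ (indeed $\overline{H}=\del_{\olnu}\log x$ for a minimal $Y$, and $\olnu$ is tangent to $\del M$ there), so the two sheets join in a $\calC^1$ fashion with matching mean curvatures and the doubled surface carries no defect along $\gamma$.

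Finally I would descend from $2Y$ to $Y$ using two symmetries. The reflection $\sigma$ across $\del M$ is an isometry of $\olg$ preserving $2Y$, and $|\whk|^2\,d\overline{A}$ is an isometry invariant of the embedding, so $\int_{2Y}|\whk|^2\,d\overline{A}=2\int_Y|\whk|^2\,d\overline{A}_{\olg}$. Invoking the conformal invariance of $|\whk|^2\,dA$ established in \S 3 (with $\olg=x^2g$) converts this to $2\int_Y|\whk|^2\,dA_g$, which is finite by Proposition \ref{malaki}. Combining with $\chi(2Y)=2\chi(Y)$ (the boundary components being circles, $\chi(\del Y)=0$) gives
\[
\calW(2Y)=2\pi\cdot 2\chi(Y)+\tfrac12\cdot 2\int_Y|\whk|^2\,dA_g=4\pi\chi(Y)+\int_Y|\whk|^2\,dA_g=-2\calA(Y),
\]
which is the assertion. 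The main obstacle is conceptual rather than computational: one must pin down the correct version of the Willmore functional. The literal quantity $\int_{2Y}\overline{H}^2\,d\overline{A}$ measured in the doubled metric $\olg$ does \emph{not} work---for a totally geodesic $Y$ it vanishes, whereas $\calA(Y)=-2\pi$---so the relevant object is the conformally invariant energy $\int(\overline{H}^2+\overline{K})\,d\overline{A}$, equivalently the classical $\int H^2$ computed in a conformally flat model of $2M$. Once this is fixed, the Gauss--Bonnet identity, the seam regularity, and the symmetry/conformal-invariance bookkeeping are all routine.
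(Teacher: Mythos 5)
Your argument is correct and follows essentially the same route as the paper: Proposition \ref{malaki} with the Weyl term dropped, conformal invariance of $|\whk|^2\,dA$, doubling, and Gauss--Bonnet on $2Y$ with $\chi(2Y)=2\chi(Y)$. What you add is a genuine clarification: the paper's chain of equalities silently uses $\int_{2Y}\overline{\kappa}_1\overline{\kappa}_2\,dA_{\olg}=2\pi\chi(2Y)$, which by the Gauss equation holds only modulo the ambient curvature of $(2M,\olg)$ --- i.e.\ only if $\calW$ is read as the conformally natural energy $\int(\overline{H}^2+\overline{K})\,d\overline{A}$, exactly as you insist. Your counterexample is right for the round compactification of $\HH^3$, where $2Y$ is the fixed-point set of an isometric reflection of $(2M,\olg)$, hence $\olg$-totally geodesic; note, though, that in the flat half-space model the doubled hemisphere is a round sphere with $\int\overline{H}^2=4\pi$, so the example should specify the choice of $h_0$ (this actually strengthens your point: the naive $\int\overline{H}^2$ is not even independent of $h_0$). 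One slip: your claim that orthogonality forces $\overline{H}\to 0$ at the seam is false. From $H=x\bigl(\overline{H}+2\del_{\olnu}\log x\bigr)$ and $\del_{\olnu}x=-u_x+\calO(x^2)=-\kappa x+\calO(x^2)$, minimality gives $\overline{H}\to 2\kappa$ up to sign, the geodesic curvature of $\gamma$ in $(\del M,h_0)$ (the flat-model hemisphere has $\overline{H}\equiv 1/r$ through the equator). This is harmless: the absence of a distributional curvature contribution along $\gamma$ follows from the $\calC^{2,1}$ regularity of $2Y$, which you already invoke, since the second fundamental form is then continuous across the seam and Gauss--Bonnet applies with integrable $K_{2Y}$.
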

\begin{proof}
We have already noted that $|\wh{k}|^2\, dA_g = |\wh{\overline{k}}|^2\, dA_{\olg}$. Next, observe that
\[
\left|\wh{\overline{k}}\right|^2 = 2 \left|\frac{\overline{\kappa}_1 - \overline{\kappa}_2}{2}\right|^2 =
2\left(\overline{H}^{\,2} - \overline{\kappa}_1\overline{\kappa}_2\right),
\]
where the $\overline{\kappa}_j$ are the principal curvatures of $Y$ with respect to $\olg$. Thus finally
\begin{eqnarray*}
\calA(Y) &= & -2\pi \chi(Y) - \int_Y \left(\overline{H}^{\,2} -
\overline{\kappa}_1\overline{\kappa}_2\right)\, dA_{\olg} \\ & = &-2\pi \chi(Y) -
\frac12 \int_{2Y} \left(\overline{H}^{\,2} - \overline{\kappa}_1\overline{\kappa}_2\right)\, dA_{\olg}
= -2\pi \chi(Y) - \frac12 \calW(2Y) + 2\pi \chi(Y),
\end{eqnarray*}
since $\chi(2Y) = 2\chi(Y)$.
\end{proof}

On the other hand, it does not seem to be the case that the extended functional $\calR$ on
$\widetilde{\calM}_k(M)$ has any simple connection with the Willmore functional. Nonetheless, this
result recasts the program of finding extremals of $\calA$ in a different and more classical light.
Let us conclude by explaining this a bit further.

The conformal invariance of the Willmore functional makes its variational theory quite
challenging. The existence of minimizers of the Willmore functional for closed surfaces
in $\RR^3$ with genus less than or equal to some fixed constant is proved in \cite{Si}, and
this result is sharpened in \cite{Ku}, where it is shown how to prevent a `drop of genus' in these
minimization arguments. We have just shown that the problem of finding extrema, and in
particular, maxima, for $\calA$ on some given moduli space $\calM_k(M)$ is equivalent
to finding constrained extrema (in particular, minima) of $\calW$ on $2M$ with respect
to the metric $\olg$, within the {\it restricted} class of surfaces which are invariant under
the $\ZZ_2$ involution and which are minimal with respect to $g$.  If it were possible
to adapt the arguments from \cite{Si} to this ambiently curved setting, we could prove the existence
of such extrema. This may well be subtle, and the strengthened result from \cite{Ku}
may not be available, even when $M = \HH^3$. Indeed, consider that we have proved
that there are no nondegenerate critical points for $\calA$ on $\calM_k(\HH^3)$
when $k > 0$. This indicates that any extremizing sequence $Y_j$, e.g.\ one for
which $\calA(Y_j)$ tends to the supremum, probably does not converge to a surface of the
same genus.

\end{document}